\documentclass[12pt]{amsart}
\usepackage{graphicx,tikz,tikz-cd}
\usepackage{amssymb,amscd, MnSymbol}
\pagestyle{plain}
\usepackage{setspace,fullpage}
\usepackage{enumerate}
\usetikzlibrary{arrows, automata,chains,fit,shapes}

%\usepackage[small,nohug, heads=vee]{diagrams}
%\diagramstyle[labelstyle=\scriptstyle]
%\textwidth = 5 in \textheight = 9 in \oddsidemargin = 0.0 in
%\evensidemargin = 0.0 in \topmargin = 0.0 in \headheight = 0.0 in
%\headsep = 0.0 in
\parskip = 0.1in
\parindent = 0.0in

%%%%% Equation
\newcommand{\bea}{\begin{eqnarray*}}
\newcommand{\eea}{\end{eqnarray*}}

%%%%% Various

%%%% Matrix
\newcommand{\bm}{\begin{pmatrix}}
\newcommand{\fm}{\end{pmatrix}}

\DeclareMathOperator{\diam}{diam}

\newcommand\D{\mathcal D}
\newcommand\Z{\mathbb Z}
\newcommand\N{\mathbb N}
\newcommand\Q{\mathbb Q}
\newcommand\R{\mathbb R}

\newtheorem{theorem}{Theorem}[section]

\newtheorem{lemma}[theorem]{Lemma}
\newtheorem{corollary}[theorem]{Corollary}

%\title{Nets in $SOL$ not biLipschitz equivalent to lattices}
\title{Non-rectifiable Delone sets in $SOL$ and other solvable groups}

\author{Tullia Dymarz}
\author{Andr\'es Navas}
%\address{Department of Mathematics,
%University of Wisconsin-Madison, 480 Lincoln Drive,  Madison, WI 53706} \email{dymarz@math.wisc.edu}

%\thanks{The first author acknowledges support from National Science Foundation grant DMS-1207296. }
%\keywords{Baumslag-Solitar group, Baumslag-Gersten group, quasi-isometry}
%\subjclass[2000]{Primary: 20E45; Secondary: 20E08, 20F65, 55M20}

\begin{document}

\maketitle

%\section{Nets in $SOL$ not biLipschitz equivalent to lattices}
Given a lattice $\Gamma \subset SOL$, we  show that there is a coarsely dense subset 
$\mathcal{D} \subset \Gamma$ that is not 
{biLipschitz} equivalent to $\Gamma$. 
%(Note that all lattices in $SOL$ are bi-Lipschitz equivalent.)\footnote{maybe not in the abstract?}
We also prove similar results 
for lattices in certain higher rank abelian-by-abelian groups
and for the solvable Baumslag-Solitar groups.

\section*{Introduction} 

A Delone set in a metric space is a uniformly discrete, coarsely dense subset. This means that there 
exist positive constants $C,D$ such that any two distinct points of this set are at distance $\geq D$, 
and any point in the space is at distance $\leq C$ of some point in this subset. Typical examples 
of Delone sets are co-compact lattices in Lie groups. In this work, we are interested in the biLipschitz 
equivalence of Delone subsets of  certain solvable groups. 

Let us first recall that there is a big difference between the non-amenable and amenable cases. In the 
former one, a general result of Whyte \cite{W} establishes biLipschitz equivalence of any two Delone 
subsets of the same non-amenable finitely-generated group. (Actually, this holds in any non-amenable 
space provided  some uniform condition on the geometry is satisfied.) For the amenable case, the situation 
is unclear. An easy argument shows that any two Delone subsets of $\mathbb{R}$ are biLipschitz 
equivalent. However, this is false in $\R^d$ for $d \geq 2$, as was shown by Burago-Kleiner \cite{BK} 
and, independently, by McMullen \cite{M}. Although not explicitly stated in these works, it turns out 
that their examples may be realized as Delone subsets of $\Z^d$ 
%(This appears in a more transparent way in 
(see \cite{CN}). This motivates the next general question:

\vspace{0.1cm}

\noindent{\bf Question.} Let $\Gamma$ be a finitely-generated infinite amenable group that is not a finite 
extension of $\mathbb{Z}$. Does $\Gamma$ contain a Delone subset that is not biLipschitz 
equivalent to $\Gamma$ itself?

\vspace{0.1cm}
We call such a Delone set \emph{non-rectifiable}.
The first non-rectifiable Delone sets in amenable groups other that $\Z^d$ were constructed by the first author in 
\cite{D} by showing that any lamplighter group $\Gamma= F \wr \Z$ with $|F| < \infty$ contains  finite-index subgroups
that are not biLipschitz equivalent to $\Gamma$ itself. These examples were further generalized in \cite{DPT} to the 
so-called Diestel-Leader groups. 
All lattices in the three dimensional solvable Lie group $SOL$ are biLipschitz equivalent so, in particular, for a given lattice $\Gamma$ it cannot have non-rectifiable subgroups. In this work we show that, nevertheless, there do exist non-rectifiable Delone subsets in any lattice $\Gamma \subset SOL$. Additionally, we prove the same results for lattices in certain abelian-by-abelian solvable Lie groups, as well as for the solvable Baumslag-Solitar groups.

%In this work, we answer the above question in the affirmative 
%for other basic solvable groups, namely, lattices in the three dimensional solvable Lie  group $SOL$, lattices in certain higher-rank abelian-by-abelian solvable Lie groups and the solvable Baumslag-Solitar groups. 

%\footnote{Say something about the nilpotent case; write another paper ? (uffff)}
%In each case, we establish the existence of a Delone subset that is non-rectifiable, that is, non biLipschitz 
%equivalent to the whole group. 
%This time the non-rectifiable Delone subsets are not subgroups (indeed, all finite index subgroups in these groups are biLipschitz equivalent). 
To construct the required subsets, we combine two ingredients. First, we use slight modifications of 
the combinatorial arguments of \cite{CN}, which correspond to discretized versions of the arguments in \cite{BK,M} 
and perfectly fit in our context. Second, we crucially use the description of all quasi-isometies of the 
underlying group, which is available in each case. It is this last issue that unables us to treat the 
general case of the question above, so that the complete answer remains unclear.

\subsection*{Outline} The three different families of groups that we consider are each treated in separate sections; namely, Section \ref{sol:sec} for lattices in $SOL$, Section \ref{hr:sec} for lattices in certain higher-rank abelian-by-abelian groups, and Section \ref{bs:sec} for solvable Baumslag-Solitar groups. The three theorems we prove are the following:

{\bf Theorem \ref{sol:thm}} \emph{Any lattice in $SOL$ contains a non-rectifiable Delone subset. }

{\bf Theorem \ref{hr:thm}} \emph{Any lattice in a boundary one-dimensional, even-scaling, abelian-by-abelian Lie group contains non-rectifiable Delone subsets.}

(See Section \ref{hr:sec} for a definition of \emph{boundary one-dimensional, even-scaling,} abelian-by-abelian Lie groups.)

{\bf Theorem \ref{bs:thm}} \emph{For all m, the solvable Baumslag-Solitar group $$BS(1,m)=\left< t, a \mid tat^{-1} =a^m\right>$$ contains non-rectifiable Delone subsets.}

Section \ref{sol:sec}, the case of lattices in $SOL$, serves as an introduction to the more general case of lattices in abelian-by-abelian Lie groups. (Indeed, $SOL$ itself is a boundary one-dimensional, even-scaling, abelian-by-abelian Lie group).  For any Delone set $\D$ and lattice $\Gamma$, a biLipschitz equivalence $\D \to \Gamma$ gives rise to a quasi-isometry of the ambient Lie group. Quasi-isometries of these Lie groups are understood by the work of Eskin-Fisher-Whyte in \cite{EFW1,EFW2} and Peng in \cite{P1,P2}. Their results imply that certain \emph{box} F\o lner sets are almost preserved up to a quasi-isometry. This allows us to construct $\D$ by removing points from $\Gamma$ in such a way that any quasi-isometry induced by a biLipschitz map $\D \to \Gamma$ would violate the conditions of the quasi-isometric rigidity theorems. 
Solvable Baumslag-Solitar groups are not lattices in any real Lie groups but their quasi-isometries have a similar structure to those of $SOL$. Again, we show that the images of certain special F\o lner sets are almost preserved under quasi-isometries, and this allows us to apply a similar proof to the one for lattices in $SOL$.

\subsection*{Acknowledgements} 
This paper was initiated when the authors were attending the quarter
program �Random walks on groups� at the Institute of Henri Poincar\'e from January to
March 2014. They would like to thank IHP for its hospitality and wonderful working
conditions. The first author acknowledges support from National Science Foundation grant
DMS-1207296, and would also like to thank David Fisher for useful conversations.  
The second author acknowledges support from CONICYT's Anillo Research Project 1103 DySyRF, 
and from CNRS (UMR 8628, Univ. d’Orsay) via the ERC starting grant 257110 RaWG.
%The outline for each of the proofs, however, is the same. The 
%\begin{itemize}
%\item Construct $\D$ by tiling the group by families of F\o lner sets and removing a specified number of group elements from each F\o lner set depending on its location. 
%\item
%\end{itemize}
%%%%%%%%%%%%%%%%%%%%%%%%%%%%%%%%%%%%%%%%%%%%%%%%%%%%%

\section{Delone sets in $SOL$}\label{sol:sec}

%\footnote{need more of an introduction here}

The solvable Lie group $SOL=\R^2 \rtimes \R$ with the coordinates $(x,y,t)$ has multiplication rule 
$$ (x_1,y_1,t_1)* (x_2,y_2,t_2) := (x_1 + e^{t_1}x_2,\ y_1 + e^{-t_1}y_2,\  t_1+t_2)$$
and left-invariant metric 
$$ds^2=e^{-2t}dx^2 + e^{2t}dy^2 + dt^2,$$ 
which gives rise to a distance function quasi-isometric to 
$$d \big( (x_1,y_1,t_1), (x_2,y_2,t_2) \big) 
:= e^{-\frac{t_1+t_2}{2}} |x_1 - x_2| 
+ e^{\frac{t_1+t_2}{2}} |y_1 - y_2| + |t_1 - t_2|.$$
A typical example of a lattice in $SOL$ is given by $\Gamma=\Z^2 \rtimes \Z$, where the action of $\Z$ 
is given by the matrix $ \left(\begin{smallmatrix} 2 & 1 \\ 1 & 1\end{smallmatrix}\right)$. 
(Indeed, any diagonalizable matrix with eigenvalues {of norm $\neq 1$} will also give a lattice in $SOL$.)

In the next section we explain the construction of the Delone set $\D$ and then in the following sections we prove that it is indeed non-rectifiable. 

\subsection{Tiling}\label{tiling:sec} In this section we construct the non-rectifiable Delone set $\D$. 
Let $$S_N := [0,N)\times[0,1)\times [0,\log{N})$$ be a subset of $SOL$. 
We can tile the larger set
$S_{N^2}$ by $2N$ translates of $S_N$ by letting 
$${S}_{N^2}=  \bigsqcup_{i,k} g_{k,i} {S_N}, \quad i=1,2\textrm{ and }  k=1,\ldots, N,$$ 
where $g_{k,1} := ((k-1)N, 0,0)$ and $g_{k,2} := (0, \frac{(k-1)}{N}, \log{N})$. 
In other words, $S_{N^2}$ is the disjoint union of 
$$T_{k,1}:=g_{k,1}S_N=[(k-1)N,kN) \times [0,1) \times [0, \log{N})$$
and
$$T_{k,2}:=g_{k,2}S_N=[0,N^2) \times [\frac{k-1}{N},\frac{k}{N})  \times [ \log{N}, 2 \log{N}).$$ 
Now, inductively, for any $M := N^{2^m}$ we can tile $S_M$ by $2^m N^{2^m-1}$ copies of $S_N$. 

Fix $\Gamma$ a lattice in $SOL$. We will construct our Delone set $\D$ inductively by fixing $N_0$ and picking four subsets $Q_j^0\subset S_{N_0}\cap \Gamma$ for $j=1,2,3,4$
and translating them by $g_{k,i}$ to define four new sets $Q_j^1 \subset S_{N_0^2} \cap \Gamma $. We have to be careful here since $g_{k,i} (S_N \cap \Gamma)$ is not 
necessarily a subset of $\Gamma$. However, since $\Gamma$ is coarsely dense in $SOL$, there is always a 
$\gamma_{k,i} \in \Gamma$ with 
$$d_{SOL}(\gamma_{k,i}, g_{k,i}) < C,$$
where $C$ depends only on the coarse density of $\Gamma$ in $SOL$. 
%\footnote{Really this, or change after translating ?}
Therefore, we can first construct a Delone set $\hat{\mathcal{D}}$ that is at most distance $C$ from a subset of $\Gamma$.  
Then by moving points in $\hat{\mathcal{D}}$ at most distance $C$ we get the desired Delone set $\D \subset \Gamma$.  We will make sure that the initial subsets $Q_j^0$ are chosen to be at least distance $C$ from the boundary of $S_{N_0}$ 
%there is a natural bijection $\gamma_{k,i} x \mapsto g_{k,i} x$ between $\gamma_{k,i}Q_j^0$ and $g_{k,i}Q_j^0$. 
{so that the natural bijections $\gamma_{k,i} x \mapsto g_{k,i} x$ between $\gamma_{k,i}Q_j^0$ and $g_{k,i}Q_j^0$ do not overlap.}

%\footnote{It seems there is an injectivity issue here that is settled by Hall's lemma... (?)}

{\bf Volumes.} In order to make volume computations more feasible, we fix $N_0$ an even integer and pick $n$ points from $\Gamma \cap S_{N_0}$. 
We denote this set $Q^0_0$ (i.e. $|Q_0^0|=n$). The $Q_j^0$ for $j=1,2,3,4$ will all be subsets of $Q_0^0$.
The number $n$ will be chosen later and it will determine how large $N_0$ must be. 
As mentioned above, we pick these points to be at least distance $C$ from the boundary of $S_{N_0}$. This ensures that for $g \in SOL$ 
and $\gamma \in \Gamma $ with $d_{SOL} ( g, \gamma)< C$, we  have
$$|Q^0_0|=|gQ^0_0|=|\gamma Q^0_0 \cap g S_{N_0}|=|\gamma Q^0_0|. $$

If we set $Q^1_0=\bigsqcup_{i,k} g_{k,i} {Q^0_0}$ and then inductively define 
$Q^m_0$, we find that the volume of {$Q^m_0$} is given by
$$|Q^m_0 |=2^m N_0^{2^m-1} |Q^0_0|=M \log_{N_0}{(M)} \frac{n}{N_0},$$
where $M=N_0^{2^m}$.

{\bf Basic tiles.} 
%Now we are ready to construct four basic tiles $Q_i^0$, $i=1, \ldots, 4$. 
We fix $d_1,d_2,d_3,d_4$ rational constants in $(0,1]$ that satisfy $d_1 < d_2$ as well as  
$$d_3 =\frac{3d_1-d_2}{2} \quad \textrm{and} \quad d_4 = \frac{3d_2 -d_1}{2}.$$
(For instance, pick $d_1=1/3, d_2=1/2,d_3=1/4, d_4=7/12$.) Notice that 
$d_3 < d_1 < d_2 < d_4$. We choose 
$n$ so that $d_j n \in \N$ for each $j = 1,\ldots,4$.
%{\bf Basic tiles.} 
We define basic tiles $Q_j^0 \subset Q^0_0 \subset S_{N_0}$, $j = 1, \ldots, 4$, with 
$$|Q^0_j|=d_j |Q^0_0|=d_j n,$$
by arbitrarily picking the required number of points from $Q^0_0$.
 % In a maximally separated uniform way and away from the $C$ boundary of $S_{N}$. 

%We will be tiling $SOL$ with translates of $Q^0_i$ by elements of $SOL$. 
%Note that for any element $g\in SOL$ there is an element $\gamma \in \Gamma$ such that $d_{SOL}(g, \gamma) < C$.  Since we are looking to construct $\D \subset \Gamma$ then whenever we require a tile $gQ_i^0$ for $g \in SOL$ we will instead replace $gQ_i^0$ with $\gamma Q_i^0 \cap g S_N \subset \Gamma$. Since none of the points in $Q_i^0$ are in the $C$ boundary of $S_N$ this implies that  $\gamma Q_i^0 \cap g S_N =\gamma Q_i^0$ and so 
%$$|Q_i^0|=|gQ_i^0|=|\gamma Q_i^0 \cap g S_N|=|\gamma Q_i^0|. $$
%This adjustment will be done at the end of tiling. So in reality we first construct a Delone set that is not necessarily a subset of $\Gamma$ and then we adjust it by a bounded distance of at most $C$ to get $\D \subset \Gamma$.

{\bf Inductively defined tiles.} Define $Q_j^1$ for $j=1,2,3,4$ as follows: Consider 
$$T_{k,1} := [(k-1){N_0},k{N_0}) \times [0,1) \times [0, \log{{N_0}})$$
 for $k=1,\ldots , {N_0}$, as before. If $k$ is odd, set $Q_1^1 \vert_{T_{k,1}} = Q_2^1 \vert_{T_{k,1}} := g_{k,1} Q^0_1$, and if $k$ is even, 
 set $Q_1^1 \vert_{T_{k,1}} = Q_2^1 \vert_{T_{k,1}} := g_{k,1} Q^0_2$. (Here, as before, $g_{k,1} \in SOL$ is the element that takes 
 $S_{N_0}$ to $T_{k,1}$.) 
 
Next, consider 
$$T_{k,2} := [0,{N_0}^2) \times [\frac{k-1}{{N_0}},\frac{k}{{N_0}})  \times [ \log{{N_0}}, 2 \log{{N_0}})$$
for $k=1,\ldots , {N_0}$. Then set $Q_1^1\vert_{T_{k,2}} := g_{k,2} Q^0_3$ 
and $Q_2^1\vert_{T_{k,2}} := g_{k,2} Q^0_4$.
%where $g_{k,2} \in SOL$ is the element that takes $S_{N_0}$ to $T_{k,2}$). 

%We note that  $$ S_{N^2}= \bigsqcup_{i,k} g_{k,i} S_N$$
%so that in particular for 
%$$\bar{S}_{N^2}:=  \bigsqcup_{i,k} g_{k,i} \bar{S} $$
%we have that 
%$\left| \bar{S}_{N^2} \right| = 2 N |\bar{S}|$.

Now we compute the volume of these sets:
\bea
|Q_1^1|&=& \frac{{N_0}}{2}|Q_1^0| +  \frac{{N_0}}{2}|Q_2^0| + {N_0} |Q_3^0| \\
&=& \frac{d_1 }{2}{N_0}|Q^0_0| +  \frac{d_2 }{2}{N_0}|Q^0_0| + d_3{N_0} |Q^0_0| \\
&=& \left(\frac{d_1 }{2} +  \frac{d_2 }{2} + d_3\right) \frac{1}{2} |Q^1_0| \\
&=& d_1 | Q^1_0|, 
\eea
and similarly, 
\bea
|Q_2^1|&=& \frac{ {N_0}}{2}|Q_1^0| +  \frac{{N_0}}{2}|Q_2^0| + {N_0} {|Q_4^0|}\\
&=& \left(\frac{d_1 }{2} +  \frac{d_2 }{2} + d_4\right) \frac{1}{2} |Q^1_0|\\
&=& d_2 | Q^1_0|,
\eea
where the last equality in each case comes, respectively, from 
$$\left(\frac{d_1 }{2} +  \frac{d_2 }{2} + d_3\right) \frac{1}{2}=d_1 \quad 
\textrm{and} \quad \left(\frac{d_1 }{2} +  \frac{d_2 }{2} + {d_4}\right) \frac{1}{2}=d_2.$$

To define $Q_3^1$ and $Q_4^1$ simply set $Q_3^1:=\bigsqcup_{i,k} g_{k,i} {Q_3^0}$ 
and $Q_4^1 := \bigsqcup_{i,k} g_{k,i} {Q_4^0}$, 
so that trivially $|Q^1_3|=d_3 |Q^1_0|$ and $|Q^1_4|=d_4 |Q^1_0|$.

Now, we repeat this process to inductively define $Q_j^m$ for $j=1, \ldots, 4$, 
with 
$$|Q_j^m|=d_j |Q_0^{m}|.$$

Next, we let $\hat{\D}$ be the union of all $Q^1_j$. 
This defines $\hat\D$ on $\Gamma \cap U$, where 
$$U :=  \{(x,y,t) \mid x,t \mbox{ non negative, } y \in [0,1] \}.$$ 
We then extend $\hat\D$ to 
$SOL \setminus U$ simply by taking $\hat\D=\Gamma$ therein. Finally, we let $\D$ be the subset of 
$\Gamma$ obtained by translating points of $\hat\D$ a distance $\leq C$, as previously explained. 

\subsection{Quasi-isometries.}

Any $K$-biLipschitz map $f:\mathcal{D} \to \Gamma$ extends to a $(K,C)$ quasi-isometry of $SOL$, 
where $C$ depends on the coarse density constants of $\mathcal{D}$ and $\Gamma$. By Eskin-Fisher-Whyte 
\cite{EFW1, EFW2}, this quasi-isometry is, up to an isometry that permutes the first two factors, at bounded distance $A=A(K,C)$ from a 
``companion'' quasi-isometry $F$ of the form
$$ F(x,y,t)=( f_1(x), f_2(y), t),$$
where $f_1, f_2$ are $L$-biLipschitz maps of $\R$. As long as $f$ coarsely fixes the identity, 
the constant $L$ depends only on $K$ and $C$.  
Here we use the term ``coarsely fixes" the identity to mean that it maps 
the identity at most distance $C$ from the identity in the image. We can always arrange for $f$ to coarsely fix the identity by composing 
$f$ with left multiplication by a group element in $\Gamma$. We will also assume that $f$ does not permute the first two 
factors since this does not change the arguments in the proof.
%the isometry that permutes these factors maps any lattice bijectively to another lattice. 
Henceforth, companion quasi-isometries 
will be always chosen with these properties.   
%\footnote{This is only true if $f$  (coarsely) fixes the identity.}

%We will write $\mathcal{D'}$ for $F(\D)$. (So there is a bijection between $\D'$ and $\Gamma$ that moves points distance at most $A$).

\subsection{F\o lner sets.} Recall that a \emph{F\o lner sequence} in a finitely generated group $\Gamma$ is a sequence of finite sets $S_i$ such that for all $R>0$ 
$$\lim_{i \to \infty} \frac{ |\partial_R S_i|}{|S_i|} = 0$$
where 
$$\partial_R S_i := \{ x \in \Gamma \mid d_\Gamma(x ,\Gamma\setminus S_i) \leq R \textrm{ and } d_\Gamma(x , S_i)\leq R \}$$
and $d_\Gamma$ is any metric quasi-isometric to a word metric on $\Gamma$.
In a Lie group, the definition is similar but with finite sets replaced with compact sets and counting measure by volume (Haar measure). 
%Additionally, for a Lie group we can estimate $|\partial_R S_i|$ by  
Although the next terminology is not very precise, any set that belongs to a F\o lner sequence will be called a \emph{F\o lner set}. 

In the next lemma, we show that in a Lie group $G$, any F\o lner set $S \subset G$ defines a F\o lner set $\bar{S}:=S \cap \Gamma$ in $\Gamma$ for any uniform lattice $\Gamma \subset G$.
To make computations easier we chose $d_\Gamma$ to be the restriction of the metric $d_G$ on $G$.
This allows us to compare $|S|$ with $|S\cap \Gamma|$ and $ |\partial_R S|$ with $|\partial_R (S\cap \Gamma)|$. We also assume that $|G/\Gamma|=1$. %\footnote{extra constant appears if not}.
%The following lemma implies that any F\o lner sequence in $G$ defines a F\o lner sequence in $\Gamma$. 
%

\begin{lemma}\label{compare:lemma} Let $D := \diam(G/\Gamma)$. 
% $\displaystyle E=\min_{\gamma \in \Gamma}\{ d_\Gamma(1, \gamma)\}$  and $V= |H/\Gamma|$ 
For any set $S$ and any $R\gg D$, we have that 
$$ \frac{|\partial_{R-2D} S|}{ |S| + |\partial _D S|}\leq \frac{|\partial_R (S\cap \Gamma)|}{|S\cap \Gamma|} 
\leq \frac{|\partial_{R+D} S|}{ |S| - |\partial _D S|}.$$
%which implies that a F\o lner sequence in $G$ defines a F\o lner sequence in $\Gamma$. 
\end{lemma}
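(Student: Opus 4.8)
The plan is to relate the $\Gamma$-combinatorics of $S \cap \Gamma$ to the $G$-geometry of $S$ via the uniform lattice structure, using the fact that every point of $G$ lies within $D = \diam(G/\Gamma)$ of a point of $\Gamma$ and, conversely, that a unit-covolume lattice places roughly one lattice point in every unit-volume region. Concretely, since $|G/\Gamma| = 1$, for any ``nice'' compact set $W \subset G$ the count $|W \cap \Gamma|$ should be comparable to $\mathrm{vol}(W)$ up to an additive error controlled by $\mathrm{vol}(\partial_D W)$ — the boundary collar of width $D$ is exactly where the covolume-$1$ heuristic can fail. So the first step is to record the two-sided estimate
\[
\mathrm{vol}(W) - \mathrm{vol}(\partial_D W) \;\leq\; |W \cap \Gamma| \;\leq\; \mathrm{vol}(W) + \mathrm{vol}(\partial_D W),
\]
valid for all $W$, which follows by tiling $G$ by fundamental domains: a fundamental domain meeting the interior of $W$ away from its $D$-collar is entirely inside $W$ and contributes exactly one lattice point, and one entirely outside the $D$-collar of the complement contributes nothing; all the slack is absorbed in the $D$-collar.

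Next I would apply this to $W = S$ to control the denominators, getting $|S \cap \Gamma| \geq \mathrm{vol}(S) - \mathrm{vol}(\partial_D S) = |S| - |\partial_D S|$ and $|S\cap\Gamma| \leq |S| + |\partial_D S|$ (here I am conflating $\mathrm{vol}$ with $|\cdot|$ as the paper does, and writing $|\partial_D S|$ for the volume of the $D$-collar). The crux is then the numerator: I must compare the combinatorial boundary $\partial_R(S \cap \Gamma)$, defined using $d_\Gamma = d_G|_\Gamma$, with the geometric collar $\partial_R S$. If $x \in \Gamma$ lies in $\partial_R(S\cap\Gamma)$, then $x$ is within $R$ (in $d_G$) of both $S\cap\Gamma$ and its complement in $\Gamma$; since $S\cap\Gamma \subset S$ and, using coarse density, any point of $G \setminus S$ is within $D$ of a point of $\Gamma \setminus (S\cap\Gamma)$ (provided that point is genuinely outside $S$ — this needs $R$ large relative to $D$, which is why the hypothesis $R \gg D$ appears), one gets $x \in \partial_{R+D} S$. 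Hence $\partial_R(S\cap\Gamma) \subset \partial_{R+D}S \cap \Gamma$, and applying the counting estimate to $W = \partial_{R+D}S$ gives $|\partial_R(S\cap\Gamma)| \leq |\partial_{R+D}S| + |\partial_D(\partial_{R+D}S)|$; a standard collar-of-a-collar containment lets one re-absorb the second term and conclude $|\partial_R(S\cap\Gamma)| \leq |\partial_{R+D}S|$ after mild re-indexing (or one simply reads the inequality as stated, with the understanding that the $|\partial_D S|$-type corrections are bundled into the denominators). The reverse containment is analogous: a lattice point in a slightly shrunken geometric collar $\partial_{R-2D}S$ must see both $S\cap\Gamma$ and $\Gamma\setminus S$ within $R$, placing it in $\partial_R(S\cap\Gamma)$, which yields the lower bound.

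Assembling the four one-sided inequalities — upper bound on the numerator $|\partial_R(S\cap\Gamma)|$, lower bound on the denominator $|S\cap\Gamma|$ for the right-hand inequality, and the reversed pair for the left-hand inequality — gives exactly the displayed sandwich. The main obstacle I anticipate is the bookkeeping at the boundary: making precise the claim that a point of $G$ outside $S$ but near $S\cap\Gamma$ in fact has a $\Gamma$-neighbor outside $S$ (not merely outside $S\cap\Gamma$, which is automatic), and tracking how many times the $D$-collar error gets invoked so that the width parameters $R-2D$, $R$, $R+D$ and the correction terms $\pm|\partial_D S|$ come out precisely as written rather than with larger constants. This is where the hypothesis $R \gg D$ does the work, ensuring that shrinking or fattening a collar by $O(D)$ does not make it empty or cause the inner and outer constraints to become vacuous; none of it is deep, but it requires care to land on the exact inequality claimed.
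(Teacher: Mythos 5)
Your approach is the same as the paper's in essence: establish the fundamental-domain counting estimate $|W| - |\partial_D W| \leq |W\cap\Gamma| \leq |W| + |\partial_D W|$ (the paper proves this by setting $S' := (\Gamma\cap S)\cdot E$ for a fundamental domain $E$ of diameter $D$ containing the identity, and showing $S\setminus\partial_D S\subset S'\subset S\cup\partial_D S$), apply it to $W=S$ for the denominators, and combine it with collar inclusions for the numerator. The denominator treatment is exactly right. Where the bookkeeping slips is the numerator, in both directions, and you notice this yourself. For the upper bound you first fatten $\partial_R(S\cap\Gamma)$ to $\partial_{R+D}S\cap\Gamma$ and then count, which produces an extra $|\partial_D(\partial_{R+D}S)|$ term that cannot be ``re-absorbed'' since the collar width is already at its final value $R+D$. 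The fix, and what the paper does, is to observe that no fattening is needed at all: since $S\cap\Gamma\subset S$ and $\Gamma\setminus S\subset G\setminus S$, one has $\partial_R(S\cap\Gamma)\subset\partial_R S\cap\Gamma$ directly, with no loss of $D$. Counting on $W=\partial_R S$ and using $\partial_R S\cup\partial_D(\partial_R S)\subset\partial_{R+D}S$ then gives exactly $|\partial_R(S\cap\Gamma)|\leq|\partial_{R+D}S|$. Dually, use the intermediate collar $\partial_{R-D}S$: the containment $\partial_{R-2D}S\subset\partial_{R-D}S\setminus\partial_D(\partial_{R-D}S)$ together with the lower half of the counting estimate gives $|\partial_{R-2D}S|\leq|\partial_{R-D}S\cap\Gamma|$, and then $\partial_{R-D}S\cap\Gamma\subset\partial_R(S\cap\Gamma)$ finishes. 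Your version, comparing $\partial_{R-2D}S$ directly to $\partial_R(S\cap\Gamma)$, again carries a stray $|\partial_D(\partial_{R-2D}S)|$.

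On the worry you flag at the end: yes, the containment $\partial_{R-D}S\cap\Gamma\subset\partial_R(S\cap\Gamma)$ needs that a lattice point within $R-D$ of both $S$ and $G\setminus S$ has $\Gamma$-neighbors within $R$ lying in $S$ and in $G\setminus S$ respectively; a $\Gamma$-neighbor of a point of $G\setminus S$ need not itself lie outside $S$. The paper's proof passes over this same step without comment, so it is not a defect peculiar to your argument, and for the box-shaped F\o lner sets to which the lemma is actually applied it causes no trouble. Your overall route is the right one; you only needed to pick the intermediate collars more carefully to land on the stated constants rather than slightly weaker ones.
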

\begin{proof} 
First we claim that for any $S$,
\begin{equation}\label{needed-later}
|S\setminus \partial_{D} S| \leq |S\cap \Gamma| \leq |S\cup \partial_D S|.
\end{equation}
To see this, let $E$ be a fundamental domain, containing the identity, for the action of $\Gamma$ 
on $G$ (so that $|E|=1$ and $\diam(E)=D$), and let $S'=(\Gamma\cap S)\cdot E$. Then 
$|S'|= |S \cap \Gamma|$ and 
$$S \setminus \partial_{D} S \subset S' \subset \partial_D S \cup S.$$  
The second inclusion is clear since $\diam(E)=D$ and so for any $\gamma \in S\cap \Gamma$ we must have $\gamma \cdot E \subset \partial_D S \cup S$.
The first inclusion follows from the fact that any $x \in S\setminus S'$ must lie in some $\gamma \cdot E$, in which case $\gamma \in \partial_D S\setminus S$, and since $d(\gamma, x)\leq D$, we must have $x \in \partial_D S$. 

This claim gives us
$$ |S| - |\partial _D S| \leq |S \cap \Gamma|  \leq |S| + |\partial_D S|.$$
Futhermore, for any $R\gg D$ we have that
$$|\partial_{R-2D} S | \leq |\partial_{R-D} S \cap \Gamma | \leq | \partial_R (S \cap \Gamma) |\leq |\partial_R S \cap \Gamma| \leq |\partial_{R+D} S|,$$
and therefore
$$ \frac{ |\partial_{R-2D} S|}{ |S| + |\partial _D S| }\leq \frac{|\partial_R(S\cap \Gamma)|}{|S\cap \Gamma|}\leq \frac{ |\partial_{R+D} S|}{ |S|-|\partial _D S|},$$
%which implies that a F\o lner sequence in $G$ defines a F\o lner sequence in $\Gamma$. 
as desired.
\end{proof}

We now define a family of preferred F\o lner sets in $SOL$  (and hence $\Gamma$). 
A \emph{standard} F\o lner set in $SOL$ is given by
$$U_r \times U_s \times [-\log s, \log r)$$
where $U_r$ and $U_s$ are intervals of length $r$ and $s$ respectively. 
Note that $S_{N}$ defined in the previous section is a preferred F\o lner set with $r={N}$ and $s=1$.
As long as $rs > 1$, the volume of this set is  $rs \log (rs)$, and the volume of the $R$-boundary 
is bounded by $\bar{C}rs$, where $\bar{C}$ depends on $R$. For the case of $S_N$, this is 
proved in detail just after Corollary \ref{sol:corollary}  in Section~\ref{hr:sec}.

%REMOVE THIS It is not hard 
%to check that for each $A > 0$, there is a constant $\bar{C} = \bar{C}_A$ such that 
%$$| \partial_A S_{r,s} | \leq \bar{C} |S_{r,s}| \left( \frac{1}{\log{r}} + \frac{1}{\log{s}} \right).$$
%KEEP THIS BUT REWRITE AND REFER TO LEMMA IN NEXT SECTION 

We also need to estimate the boundary volume ratio for images of standard F\o lner sets; namely sets of the form 
$$U_{ar} \times U_{bs} \times [-\log s, \log r),$$
with $a,b>0$. 
%
%$$U_r \times U_s \times [-\log s+a, \log r-b]$$
%
Any set of this form 
is also a F\o lner set as long as we fix 
$a,b$ and let $rs \to \infty$. 
Again, this follows from Corollary \ref{sol:corollary} in the next section.
The following lemma will be applied with $G=SOL$ in this section and $G$ a higher-rank abelian-by-abelian solvable Lie group in the next section.

\begin{lemma}\label{first:lemma} For any  F\o lner set $S\subset G$, suppose that $f:\mathcal{D} \to \Gamma$  and $F: G \to G$ are bounded distance $A$ apart on $\mathcal{D}$. Then the following inequality holds: 
$$ |F(S)| - |\partial_{A+D} F(S)|  \leq |f(\mathcal{D} \cap S)| \leq |F(S)| + |\partial_{A+D} F(S)|.$$
\end{lemma}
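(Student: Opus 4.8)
The plan is to relate the discrete set $f(\mathcal{D}\cap S)$ to the continuous set $F(S)$ using only the hypothesis that $f$ and $F$ are bounded distance $A$ apart on $\mathcal{D}$, together with the fact (established in Lemma \ref{compare:lemma} and its proof, especially the estimate \eqref{needed-later}) that $\Gamma$ is a uniform lattice with $\diam(G/\Gamma) = D$. The point is that $f(\mathcal{D}\cap S)$ is a finite subset of $\Gamma$, so I can count it by comparing with volumes of neighboring regions in $G$.

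**First I would** observe that for every $x\in\mathcal{D}\cap S$ we have $d_G(f(x), F(x)) \le A$ and $F(x)\in F(S)$, hence $f(x)$ lies in the closed $A$-neighborhood of $F(S)$. This gives the inclusion $f(\mathcal{D}\cap S)\subset \Gamma\cap \big(F(S)\cup\partial_A F(S)\big)$. Applying the upper bound in \eqref{needed-later} to the set $F(S)\cup\partial_A F(S)$, and using that $\partial_D\big(F(S)\cup\partial_A F(S)\big)$ together with $\partial_A F(S)$ is absorbed into $\partial_{A+D}F(S)$ (up to the usual comparisons), yields
$$|f(\mathcal{D}\cap S)| \le |F(S)| + |\partial_{A+D}F(S)|.$$
Here I should be slightly careful: $f(\mathcal{D}\cap S)$ might not be all of $\Gamma\cap(F(S)\cup\partial_A F(S))$, but since $f$ is a bijection onto its image (it is biLipschitz), $|f(\mathcal{D}\cap S)| = |\mathcal{D}\cap S|$ — however we do not even need injectivity of $f$ for the upper bound, only the inclusion.

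**For the lower bound** I would argue in the reverse direction. Since $f$ is a $K$-biLipschitz bijection onto $\Gamma$ and $F$ is within $A$ of $f$ on $\mathcal{D}$, the companion $F$ is a quasi-isometry, and in particular $F$ is ``coarsely surjective'' in the sense that $F(\mathcal{D}\cap S)$ is $A$-dense in $F(S)$: given $y = F(x)$ with $x\in S$, either $x\in\mathcal{D}$, or $x$ is within bounded distance of a point of $\mathcal{D}\cap S$ (as $\mathcal{D}$ is Delone in $\Gamma$ which is Delone in $G$) — but the cleanest route is to use that every lattice point $\gamma\in\Gamma\cap(F(S)\setminus\partial_{A+D}F(S))$ is the $f$-image of some $x\in\mathcal{D}$ with $d_G(x, F^{-1}\text{-ish preimage})$ small, forcing $x\in S$; equivalently, $f^{-1}(\gamma)$ has $F$-image within $A$ of $\gamma$, hence within $A$ of $F(S)$, and being that deep inside forces $f^{-1}(\gamma)\in S$. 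Thus $\Gamma\cap\big(F(S)\setminus\partial_{A+D}F(S)\big)\subset f(\mathcal{D}\cap S)$, and the lower bound in \eqref{needed-later} applied to $F(S)$ gives
$$|f(\mathcal{D}\cap S)| \ge |\Gamma\cap F(S)| - |\partial_{A+D}F(S)| \ge |F(S)| - |\partial_D F(S)| - |\partial_{A+D}F(S)| \ge |F(S)| - |\partial_{A+D}F(S)|,$$
absorbing $\partial_D$ into $\partial_{A+D}$.

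**The main obstacle** will be making the lower-bound containment fully rigorous: one must show that a lattice point sitting deep inside $F(S)$ really is hit by $f$ from within $S$, which uses that $f:\mathcal{D}\to\Gamma$ is a bijection and that $F^{-1}$ (or any coarse inverse) moves points by at most a controlled amount related to $A$, $D$, and the biLipschitz constant $K$. I expect this to be handled by the same fundamental-domain bookkeeping as in Lemma \ref{compare:lemma}, so the proof should be short, amounting to: (1) the inclusion $f(\mathcal{D}\cap S)\subset \partial_A F(S)\cup F(S)$ for the upper bound; (2) the reverse inclusion on the deep interior for the lower bound; (3) invoking \eqref{needed-later} and crude comparisons $\partial_D(\cdot)\subset\partial_{A+D}(\cdot)$ to collect error terms into the single quantity $|\partial_{A+D}F(S)|$.
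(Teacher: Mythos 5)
Your proposal follows the paper's proof exactly: sandwich $f(\D\cap S)$ between $\Gamma\cap(F(S)\setminus\partial_{\bullet}F(S))$ and $\Gamma\cap(F(S)\cup\partial_A F(S))$, then apply the count-vs-volume estimate (\ref{needed-later}) from Lemma \ref{compare:lemma} to each side. The paper asserts the reverse inclusion without explanation, whereas you correctly identify the reason it holds (bijectivity of $f$ onto $\Gamma$, plus the fact that the companion $F$ is itself a bijection of $G$, so $F(x)\in F(S)$ forces $x\in S$) — that is a genuine improvement in clarity.

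One caveat on the final arithmetic for the lower bound. You write
$$|F(S)| - |\partial_D F(S)| - |\partial_{A+D}F(S)| \;\ge\; |F(S)| - |\partial_{A+D}F(S)|,$$
which is false as written (it would need $|\partial_D F(S)|\le 0$); one cannot ``absorb'' a term that has already been subtracted. The fix is to use $\partial_A$ rather than $\partial_{A+D}$ in the reverse inclusion, as the paper does, so that only one application of (\ref{needed-later}) is wasted and the $A$ and $D$ contributions can be combined into a single $\partial_{A+D}$ neighbourhood. (Strictly speaking, even the paper's final inequality sums the measures of two overlapping subsets of $\partial_{A+D}F(S)$, so a constant multiple of $|\partial_{A+D}F(S)|$ is what really comes out; this is harmless because the lemma is used only qualitatively, through the fact that the boundary term is $o(|F(S)|)$.) If you keep your $\partial_{A+D}$ version of the inclusion, state the conclusion with, say, $2|\partial_{A+2D}F(S)|$ on the right, which is equally usable downstream.
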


\begin{proof}
Since $f$ is bounded distance $A$ from $F$, the most that we can gain or lose by replacing $F$ with 
$f$ comes from the $A$-boundary of the F\o lner set $F(S)$. Specifically,  
%we have that  $|F(\mathcal{D} \cap S)|=|F(S) \cap \D'|$ and 
$$\Gamma \cap (F(S) \setminus \partial_A(F(S)))\subset f(\D \cap S) \subset (F(S) \cup \partial_A(F(S))) \cap \Gamma.$$
By {(\ref{needed-later})}, 
%Lemma \ref{compare:lemma} 
since $F(S) \cup \partial_A(F(S))$ is also a F\o lner set, this yields   
\bea |(F(S) \cup \partial_A(F(S))) \cap \Gamma| &\leq& |F(S) \cup \partial_A(F(S))| + |\partial_D ( F(S) \cup \partial_{A+D}(F(S))|  \\
&\leq& |F(S)|+ |\partial_{A+D}(F(S))|
\eea
and 
\bea
|(F(S) \setminus \partial_A(F(S))) \cap \Gamma| &\geq& |F(S) \setminus \partial_A(F(S))| - |\partial_D ( F(S) \setminus \partial_A(F(S))|  \\
&\geq& |F(S)|- |\partial_{A+D}(F(S))|,
\eea
which proves the lemma. 
\end{proof}

\subsubsection{Key lemma} 
Roughly speaking, the following lemma states that if the intersection of 
$\D$ with two standard F\o lner sets in $SOL$ of the same size is radically different,  
then the companion quasi-isometry to any biLipschitz map $f: \D \to \Gamma$ 
must map these F\o lner sets to boxes of sufficiently different sizes. This is 
the lemma that motivates the construction of $\D$ in the previous section. 

\begin{lemma}\label{getM0}
There exist $M_0 > 0$ and $\epsilon_0 > 0$, both depending on $K,C$, satisfying the following: 
Let $M := N_0^{2^m}> M_0$ and suppose $S^1,S^2$ are translates of the standard F\o lner set $S_M$ with
$|\mathcal{D}\cap S^1|=d_1|Q^m_0|$  and $ |\mathcal{D}\cap S^2|=d_2|Q^m_0|$.  
%for some $d_1 < d_2$. 
%Suppose $f: \D \to \Gamma$ is a $K$-biLipschitz map that is distance $A$ from the standard map $F$ and 
Suppose $f: \mathcal{D} \to \Gamma$ is $K$-biLipschitz, and that the companion quasi-isometry $F$ satisfies 
%$$F(S^1)= U_{rM}\times U_s \times [0, \log{M})$$ and 
%$$F(S^2)=U_{r'M}\times U_s \times [0, \log{M}).$$ 
{\color{black}
$$F(S^1) \mbox{ is isometric to } U_{rM}\times U_s \times [0, \log{M})$$ 
and 
$$F(S^2) \mbox{ is isometric to } U_{r'M}\times U_s \times [0, \log{M}).$$}Then $|r-r'| > \epsilon_0$. 
%This is impossible for $M$ large enough and $\epsilon$ small enough. 
\end{lemma}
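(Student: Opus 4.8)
The plan is to argue by contradiction: assume $|r-r'|$ is small, and derive a contradiction with the hypothesis $|\D \cap S^1| = d_1 |Q_0^m|$ versus $|\D\cap S^2| = d_2|Q_0^m|$, using that $d_1 \neq d_2$ are fixed rationals. The chain of comparisons goes $|\D \cap S^i| \approx |f(\D \cap S^i)|$ (since $f$ is $K$-biLipschitz, hence injective with controlled multiplicity, and $\Gamma$ has bounded geometry), then $|f(\D \cap S^i)| \approx |F(S^i)|$ by Lemma \ref{first:lemma}, then $|F(S^i)| \approx |U_{rM} \times U_s \times [0,\log M)| = rsM\log M$ (resp. $r'sM\log M$) by the volume formula for standard F\o lner sets recorded in the previous subsection. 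Dividing through by $|Q_0^m| = M\log_{N_0}(M)\, n/N_0$, we would get $d_1 \approx d_2 \cdot (r/r')$ up to errors controlled by the boundary-to-volume ratios. Since the standard F\o lner sets $S_M$ and their images have boundary volume $O(M)$ while their volume is $\Theta(M\log M)$, all the error terms are $O(1/\log M)$, which $\to 0$ as $M \to \infty$. Hence for $M$ large enough (this defines $M_0$), $|r/r' - d_1/d_2|$ is as small as we like; but $d_1/d_2$ is a fixed constant strictly less than $1$, so $r/r'$ is bounded away from $1$, and since $r,r'$ themselves are bounded above and below (by $K$-biLipschitzness of $f_1$ and the fact that $S^1,S^2$ have the same size), $|r-r'|$ is bounded below by a positive constant $\epsilon_0$ depending only on $K,C$ (through $L$, $A$, $D$) and on $d_1,d_2$.

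More precisely, the key steps in order are: (1) Record that $|\D \cap S^i|$ and $|f(\D\cap S^i)|$ differ by a multiplicative constant depending only on $K$ and the bounded geometry of $SOL$/$\Gamma$; in fact since $f$ is a biLipschitz bijection onto its image inside $\Gamma$, these quantities are comparable up to bounded multiplicative error, and one can be more careful and make the ratio $1 + O(1/\log M)$ using that both $\D$ and $\Gamma$ are Delone and the sets are F\o lner. (2) Apply Lemma \ref{first:lemma} to each $S^i$: $|f(\D\cap S^i)| = |F(S^i)| \pm |\partial_{A+D} F(S^i)|$, and bound $|\partial_{A+D} F(S^i)| \leq \bar{C}\cdot (\text{base area})$ using that $F(S^i)$ is isometric to a standard box $U_{rM}\times U_s \times [0,\log M)$ and the boundary estimate for such boxes. (3) Insert the exact volume $|U_{rM}\times U_s\times[0,\log M)| = rsM\log M$. (4) Combine:
$$d_1 |Q_0^m| = |\D\cap S^1| = rsM\log M \,\bigl(1 + O(1/\log M)\bigr), \qquad d_2|Q_0^m| = r'sM\log M\,\bigl(1+O(1/\log M)\bigr),$$
so $r/r' = (d_1/d_2)(1 + O(1/\log M))$. (5) Use biLipschitz bounds to get $c_1 \le r, r' \le c_2$ for constants depending on $K,C$; then $|r - r'| = r'\,|r/r' - 1| \ge c_1\,|d_1/d_2 - 1| - O(1/\log M)$, and choose $M_0$ so that the error is below half of $c_1|d_1/d_2-1|$, setting $\epsilon_0 := \tfrac12 c_1 |d_1/d_2-1| > 0$.

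The main obstacle I anticipate is step (1): making rigorous and quantitative the passage between the \emph{counting} measure $|\D \cap S^i|$ of the Delone set and the \emph{volume} $|S^i|$, and likewise between $|f(\D\cap S^i)|$ as a subset of $\Gamma$ and its volume as a region of $SOL$. This is where one must use that $\D$ is uniformly discrete and coarsely dense with controlled constants, so that the counting measure of $\D$ on a region $R$ is comparable to $\mathrm{vol}(R)$ up to $O(\mathrm{vol}(\partial R))$ — combined with the fact, from the construction, that $\D$ inside $S_M$ has exactly $|\D \cap S_M| = |Q_0^m| \cdot(\text{appropriate }d_j)$ and hence a \emph{known} density relative to $|Q_0^m|$. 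The bookkeeping of which errors are additive $O(M)$ versus multiplicative $1 + O(1/\log M)$ must be done carefully so that dividing by $|Q_0^m| = \Theta(M\log M)$ kills them all; but there is no serious difficulty once Lemma \ref{compare:lemma} and Lemma \ref{first:lemma} are invoked, since those are exactly the tools that convert volume statements to counting statements with the right error terms. A secondary point is justifying the uniform upper and lower bounds on $r,r'$: these come from the fact that $F$ is a $(K,C)$-quasi-isometry coarsely fixing the identity, so its companion factor map $f_1$ is $L$-biLipschitz fixing $0$, forcing the image interval $U_{rM}$ to have length within $[L^{-1}M, LM]$, i.e. $r \in [L^{-1}, L]$, and similarly $r' \in [L^{-1},L]$ and $s$ comparable to $1$.
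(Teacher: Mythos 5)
Your proposal follows essentially the same route as the paper: invoke Lemma~\ref{first:lemma} to bound $|f(\D\cap S^i)|$ between $|F(S^i)| \pm |\partial_{A+D}F(S^i)|$, insert the volume formulas $|F(S^1)|=rsM\log M$, $|F(S^2)|=r'sM\log M$, and $|Q_0^m| = M\log_{N_0}(M)\, n/N_0$, use the F\o lner property so that the boundary-to-volume ratios drop below a prescribed $\delta$ once $M>M_0$, and conclude from $d_1 \neq d_2$ that $|r-r'|$ is bounded below. You phrase the conclusion as $r/r' \approx d_1/d_2$ while the paper estimates the difference $d_2-d_1$ directly, but given the uniform bounds $L^{-1}\leq r,r',s\leq L$ (which both you and the paper use), these formulations are interchangeable.

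The one place your account is off is step (1): you flag as the ``main obstacle'' the passage from $|\D\cap S^i|$ to $|f(\D\cap S^i)|$, treating it as comparable only up to a multiplicative factor that needs controlling. In fact $f$ is biLipschitz hence injective, so $f(\D\cap S^i)$ is a finite subset of $\Gamma$ of exactly the same cardinality as $\D\cap S^i$; the two counts are equal, and by construction $|\D\cap S^i| = d_i|Q_0^m|$ on the nose. The only approximation genuinely needed is the counting-versus-volume comparison for $\Gamma$ against the region $F(S^i)$, which is exactly what Lemma~\ref{first:lemma} (via Lemma~\ref{compare:lemma}) supplies and which you do invoke. So the anticipated obstacle is not there, and with that observation your argument closes cleanly.
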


%\begin{lemma}\label{getM0}
%Suppose $f: \D \to \Gamma$ is a $K$-biLipschitz map that is distance $A$ from the standard map $F(x,y,t)=(f_\ell(x),f_u(y),t)$ where
%$f_\ell([0,M]) =rM$ and $f_\ell([M,2M]) =(r+\epsilon)M$

%Suppose $S_1,S_2$ are standard F\o lner sets contained in $S_M$, 
%where $M=N^{2^j}$ and $|\mathcal{D}\cap S_1|=d_1|\bar{S}_j|$ while 
%$ |\mathcal{D}\cap S_2|=d_2|\bar{S}_{j}|$ for some $d_1 < d_2$.  (Recall $|S_j|= M\log{M}\frac{|\bar{S}|}{N}$).
%Suppose $f: \D \to \Gamma$ is a $K$-biLipschitz map that is distance $A$ from the standard map $F$ and 
%Suppose 
%$$F(S_1)= U_{rM}\times U_s \times [0, \log{M}]$$ and $$F(S_2)=U_{r'M}\times U_s \times [0, \log{M}]$$ where $|r-r'|<\epsilon$. 
%Then there exists $M_0,\epsilon_0$ such if $M>M_0$ and $\epsilon< \epsilon_0$ then $f: \mathcal{D} \to \Gamma$ cannot be $K$-biLipschitz.
%This is impossible for $M$ large enough and $\epsilon$ small enough. 
%\end{lemma}

%TO DO: REWRITE THE PROOF OF THIS LEMMA. 
\begin{proof} 
By Lemma \ref{first:lemma} we have that
 $$ |F(S^i)| - |\partial_{A+D} F(S^i)| \leq |f(\mathcal{D} \cap S^i)| \leq |F(S^i)| + |\partial_{A+D} F(S^i)|$$
for $i=1,2$.
We rewrite this as 
 $$ |F(S^i)| - |\partial_{A+D} F(S^i)| \leq d_i |Q^m_0| \leq |F(S^i)| + |\partial_{A+D} F(S^i)|,$$
so that 
\begin{equation}\label{eqneqn} \frac{|F(S^i)|}{|Q^m_0|}\left(1-\frac{ |\partial_{A+D} F(S^i)|}{|F(S^i)|} \right) \leq d_i \leq \frac{|F(S^i)|}{|Q^m_0|}\left(1 + \frac{ |\partial_{A+D} F(S^i)|}{|F(S^i)|}\right).\end{equation}

Recall that $|F(S^1)|=rsM \log (M)$, $|F(S^2)|= r'sM\log (M)$ and 
$|Q^m_0|= M\log_{N_0}(M)\frac{|Q^0_0|}{N_0}=M\log_{N_0}(M)\frac{n}{N_0}$, hence 
$$ \frac{|F(S^1)|}{|Q^m_0|}= rs\frac{N_0}{n \log (N_0)} \quad \textrm{and} \quad 
 \frac{|F(S^2)|}{|Q^m_0|}= r's\frac{N_0}{n \log (N_0)}.$$

Now, since $F(S^1)$ and $F(S^2)$ are both F\o lner sets, we know that for any $\delta>0$ 
there exists $M_0$ such that if $M > M_0$, then for both $i=1$ and $i=2$ we have 
$$\frac{ |\partial_{A+D} F(S^i)|}{|F(S^i)|} < \delta.$$
Notice that the constant $M_0$ depends on $\delta$ and the quasi-isometry constants $K,C$. 
We fix $\delta > 0$ so that $|d_1 - d_2| > \frac{3 L^2 \delta N_0}{n \log (N_0)}.$  
For $i=1$, Equation (\ref{eqneqn}) becomes
$$rs\frac{N_0}{n \log (N_0)}\left(1-\delta \right) \leq d_1 \leq rs\frac{N_0}{n \log(N_0)}\left(1 +\delta\right),$$
and for $i=2$,
$$r's\frac{N_0}{n \log(N_0)}\left(1-\delta \right) \leq d_2 \leq r's\frac{N_0}{n \log(N_0)}\left(1 +\delta\right).$$ 
Thus, as $s \leq L$ and $r \leq L$,
\bea d_2 - d_1 &\leq&  r's\frac{N_0}{n \log(N_0)}\left(1 +\delta\right) -rs\frac{N_0}{n \log(N_0)}\left(1-\delta \right)\\
&\leq& (r'-r) s\frac{N_0}{n \log(N_0)}\  +\delta s\frac{N_0}{n \log(N_0)}(r'+r)\\
&\leq& (r'-r) s\frac{N_0}{n \log(N_0)}\  +\delta 2L^2\frac{N_0}{n \log(N_0)}.\\
\eea
%But since $d_1 \neq d_2$ there is $\epsilon_0>0$ such that if $\delta$ is small enough (i.e. when $M$ is large enough) then $|r-r'|>\epsilon_0$.
%
%OR
Therefore, as $s \leq L$, for $\varepsilon_0 = \frac{\delta L}{2}$ we must have $|r-r'| > \varepsilon_0$.
\end{proof}

\subsection{No $K$-bilipschitz maps}
We now show by contradiction that for any $K$ there is no $K$-biLipschitz map $f: \D \to \Gamma$. 
Recall that each $K$-biLipschitz map $f$ is bounded distance $A$ from a companion map of the form 
$F(x,y,t)=(f_\ell(x), f_u(y), t)$, where $f_\ell,f_u$ are $L$-biLipschitz maps of $\R$. 

\begin{lemma}\label{stretchlemma} Let $f: \mathcal{D} \to \Gamma$ be a map such that for all $i=1, \ldots , M$, 
\begin{equation}\label{eq1}
\frac{|f_\ell((i-1)M) - f_\ell(iM)|}{M} \leq (1+\lambda)     \frac{|f_\ell(0) - f_\ell(M^2)|}{M^2}.
\end{equation}
Then for some $k$ we have both
$$\frac{|f_\ell((k-1)M) - f_\ell(kM)|}{M} \geq (1- \lambda)     \frac{|f_\ell(0) - f_\ell(M^2)|}{M^2},$$
$$\frac{|f_\ell(kM) - f_\ell((k+1)M)|}{M} \geq (1- \lambda)     \frac{|f_\ell(0) - f_\ell(M^2)|}{M^2}.$$

\end{lemma}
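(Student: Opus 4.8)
The plan is to argue by a pigeonhole/averaging argument combined with the triangle inequality. Write $a_i := |f_\ell((i-1)M) - f_\ell(iM)|/M$ for $i = 1, \ldots, M$, and set $b := |f_\ell(0) - f_\ell(M^2)|/M^2$. The hypothesis \eqref{eq1} says that $a_i \le (1+\lambda)b$ for every $i$. Since $f_\ell$ is (coarsely) monotone — or at any rate since the segment from $f_\ell(0)$ to $f_\ell(M^2)$ has length at least $|f_\ell(0)-f_\ell(M^2)|$, which is covered by the $M$ consecutive jumps — the triangle inequality gives
\begin{equation*}
M^2 b = |f_\ell(0) - f_\ell(M^2)| \le \sum_{i=1}^{M} |f_\ell((i-1)M) - f_\ell(iM)| = M \sum_{i=1}^{M} a_i,
\end{equation*}
so the average of the $a_i$ is at least $b$. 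Thus we have $M$ numbers, each at most $(1+\lambda)b$, whose average is at least $b$.

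From this I want to deduce that \emph{two consecutive} indices $k, k+1$ both have $a_k, a_{k+1} \ge (1-\lambda)b$. Suppose not: then for every $k$ at least one of $a_k, a_{k+1}$ is $< (1-\lambda)b$, i.e. the ``small'' indices (those with $a_i < (1-\lambda)b$) form a set that meets every pair of consecutive indices, hence a set of density at least roughly $1/2$. More precisely, if $k$ and $k+1$ are never both large, then among any two consecutive indices at least one is small, so the large indices have size at most $\lceil M/2 \rceil$. Bounding the sum $\sum a_i$ by giving the large indices the maximal value $(1+\lambda)b$ and the small ones the value $(1-\lambda)b$ (and being slightly careful that small means $< (1-\lambda)b$, not $\le$, which only helps), one gets
\begin{equation*}
\sum_{i=1}^{M} a_i < \frac{M}{2}(1+\lambda)b + \frac{M}{2}(1-\lambda)b = M b,
\end{equation*}
contradicting $\sum a_i \ge M b$. (If $M$ is odd the split is $\lceil M/2\rceil$ versus $\lfloor M/2\rfloor$, which still gives a strict contradiction once $M$ is large, or one notes $M$ is even here since $M = N_0^{2^m}$ with $N_0$ even.) Hence some consecutive pair $k, k+1$ must both satisfy the lower bound, which is exactly the conclusion.

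The main thing to be careful about — the only real subtlety — is the very first inequality: I used that the total variation of $f_\ell$ across $[0, M^2]$ broken into the $M$ intervals $[(i-1)M, iM]$ bounds $|f_\ell(0) - f_\ell(M^2)|$ from above. This is just the triangle inequality and requires nothing about monotonicity, so it is in fact automatic; no hypothesis beyond what is stated is needed. One should also double-check the edge indexing: the conclusion references $f_\ell((k+1)M)$, so $k$ ranges in $1, \ldots, M-1$, and the pigeonhole above produces exactly such a $k$ provided $M \ge 2$. I do not expect any genuine obstacle here; the lemma is a clean counting argument, and the place to be slightly attentive is simply making the "meets every consecutive pair $\Rightarrow$ density $\ge 1/2$" step rigorous for both parities of $M$.
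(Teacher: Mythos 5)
Your proof is correct and takes essentially the same approach as the paper: use the triangle inequality to bound $|f_\ell(0)-f_\ell(M^2)|$ by the sum of the $M$ consecutive increments, then run a pigeonhole/vertex-cover argument on the set of ``small'' indices to produce two consecutive ``large'' ones. Your attention to parity is well placed: the paper's terse one-sentence proof silently assumes $M$ is even (which holds here since $M = N_0^{2^m}$ with $N_0$ even), and indeed the paper itself later flags, in the Remark of the Baumslag--Solitar section, that this lemma as stated fails for odd $M$ and must be weakened to a $(1-2\lambda)$ lower bound.
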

\begin{proof}
If at least half of the intervals get stretched by less than $(1-\lambda)$, we must 
have some other interval stretched by more than $(1+\lambda)$,  which is impossible 
by our hypothesis. Therefore, by the pigeonhole principle, there are at least two 
consecutive intervals that get stretched by more than $(1- \lambda)$.
\end{proof}

\begin{lemma}\label{getM02}  
%Given $d_2>d_1$, 
There exist $\lambda_0,M_0$ 
such that if  $M=N_0^{2^m}\geq M_0$, $\lambda \leq \lambda_0$ and 
\begin{equation}\label{eqnone}
\frac{|f_\ell((i-1)M) - f_\ell(iM)|}{M} 
\leq 
(1+\lambda)     \frac{|f_\ell(0) - f_\ell(M^2)|}{M^2}
\end{equation}
holds for $i=1, \ldots, M$, then $f:\D \to \Gamma$ cannot be $K$-biLipschitz. 
\end{lemma}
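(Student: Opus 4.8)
The plan is to derive a contradiction from the combination of the hypothesis (\ref{eqnone}) and the Key Lemma (Lemma \ref{getM0}), using Lemma \ref{stretchlemma} as the bridge. First I would unwind the recursive construction of $\D$: the set $S_{M^2}$ is tiled by $2M$ translates of $S_M$, and among the ``type-1'' translates $T_{k,1} = g_{k,1} S_M$ (those stacked horizontally along the $x$-direction, $k = 1, \ldots, M$), the construction assigns $\D \cap T_{k,1}$ to be $g_{k,1} Q_1^m$ when $k$ is odd and $g_{k,1} Q_2^m$ when $k$ is even. Hence for odd $k$ we have $|\D \cap T_{k,1}| = d_1 |Q_0^m|$ and for even $k$ we have $|\D \cap T_{k,1}| = d_2 |Q_0^m|$. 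Each $T_{k,1}$ is a translate of the standard F\o lner set $S_M$, with its $x$-interval equal to $[(k-1)M, kM)$, so the companion quasi-isometry $F(x,y,t) = (f_\ell(x), f_u(y), t)$ sends $T_{k,1}$ to a set isometric to $U_{r_k M} \times U_s \times [0, \log M)$, where — up to the additive error controlled by the quasi-isometry constants — $r_k M$ is comparable to $|f_\ell(kM) - f_\ell((k-1)M)|$ and $s$ is comparable to the width of $f_u$ on the unit interval (which is independent of $k$, since all $T_{k,1}$ share the same $y$-interval $[0,1)$).

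Next I would apply Lemma \ref{stretchlemma} with $\lambda \le \lambda_0$: hypothesis (\ref{eqnone}) is exactly the hypothesis (\ref{eq1}) of that lemma, so there exists some index $k$ with two consecutive ``horizontal'' sub-boxes $T_{k,1}$ and $T_{k+1,1}$ each stretched by $f_\ell$ by a factor at least $(1-\lambda)\,\frac{|f_\ell(0) - f_\ell(M^2)|}{M^2}$. Consecutive indices $k$ and $k+1$ have opposite parities, so one of these two boxes has $|\D \cap \cdot| = d_1 |Q_0^m|$ and the other has $|\D \cap \cdot| = d_2 |Q_0^m|$. Thus I obtain two translates $S^1, S^2$ of $S_M$ with $|\D \cap S^1| = d_1 |Q_0^m|$, $|\D \cap S^2| = d_2 |Q_0^m|$, whose companion-image boxes have $x$-widths $r M$ and $r' M$ with both $r, r'$ pinched within a $(1 \pm \lambda)$ (or $(1+\lambda)/(1-\lambda)$) multiplicative window of the common quantity $\frac{|f_\ell(0)-f_\ell(M^2)|}{M}$, and the same $U_s$ factor. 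Choosing $\lambda_0$ small enough (depending on $K, C$, hence on the $\epsilon_0$ produced by Lemma \ref{getM0}, and on $L$) forces $|r - r'| \le \epsilon_0$. Taking $M_0$ to be the maximum of the threshold from Lemma \ref{getM0} and the threshold from Lemma \ref{stretchlemma}, this directly contradicts the conclusion $|r - r'| > \epsilon_0$ of the Key Lemma. Therefore no such $K$-biLipschitz $f$ can exist.

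The main obstacle, and the step I would spend the most care on, is the bookkeeping that relates the ``stretch factor'' $\frac{|f_\ell((k-1)M) - f_\ell(kM)|}{M}$ appearing in Lemmas \ref{stretchlemma} and \ref{getM02} to the parameter $r_k$ describing the image box $F(T_{k,1}) \cong U_{r_k M} \times U_s \times [0,\log M)$ in Lemma \ref{getM0}: one must check that an $L$-biLipschitz map of $\R$ sends an interval of length $M$ to a set whose ``effective length,'' after the bounded-distance adjustment to $F$ and the passage to the standard-F\o lner normal form, is within a controlled multiplicative factor of $|f_\ell((k-1)M) - f_\ell(kM)|$, uniformly in $k$ and $M$. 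Once that comparison is pinned down, the argument is a matter of choosing the constants in the right order — first $d_1, d_2$ and $n, N_0$ as in the construction, then $\delta$ and hence $\epsilon_0$ from Lemma \ref{getM0}, then $\lambda_0$ small relative to $\epsilon_0 / L$, and finally $M_0$ large enough for both prior lemmas — and then reading off the contradiction. I would also note explicitly that the normalization that $f$ coarsely fixes the identity and does not permute the two $\R$-factors (arranged in the Quasi-isometries subsection) is what lets us speak unambiguously of $f_\ell$ on the $x$-axis throughout.
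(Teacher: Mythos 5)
Your proposal is correct and follows essentially the same route as the paper's own proof: apply Lemma \ref{stretchlemma} to locate two consecutive translates $S^k, S^{k+1}$ of $S_M$ (which by the alternating-parity construction have $\D$-densities $d_1$ and $d_2$), deduce $|r-r'|\leq 2\lambda L$, and contradict Lemma \ref{getM0} by choosing $\lambda_0 = \epsilon_0/(2L)$ and $M_0$ from that lemma. The ``obstacle'' you flag at the end is in fact a non-issue: the companion map $F$ acts \emph{literally} by the biLipschitz homeomorphism $f_\ell$ on the $x$-coordinate, so the image $F(S^k)$ has $x$-interval exactly $f_\ell([(k-1)M,kM))$ of length $|f_\ell((k-1)M)-f_\ell(kM)|$, giving $r_k = |f_\ell((k-1)M)-f_\ell(kM)|/M$ on the nose with no bounded-distance adjustment needed at that stage (the $A$-adjustment only enters when comparing $F$ to $f$ inside Lemma \ref{first:lemma}, which is already baked into Lemma \ref{getM0}).
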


\begin{proof}
The choice of $M$ defines F\o lner sets for $i=1, \ldots, M:$
$$S^i = [(i-1)M,iM) \times [0,1) \times [0, \log{M}).$$
%Note that for all $i$ we have  $|S_i|=|S_{M,1}| \simeq M \log{M}.$
Since $M=N_0^{2^m}$, all $S^i$ are subsets of $S_{M^2}$. 

Using Lemma \ref{stretchlemma}, we find $k$ such that  both inequalities below hold:
\begin{eqnarray}\label{thearray1}
\frac{|f_\ell((k-1)M) - f_\ell(kM)|}{M} &\geq& (1- \lambda)     \frac{|f_\ell(0) - f_\ell(M^2)|}{M^2},\\
\label{thearray2}
\frac{|f_\ell(kM) - f_\ell((k+1)M)|}{M} &\geq& (1- \lambda)     \frac{|f_\ell(0) - f_\ell(M^2)|}{M^2}.
\end{eqnarray}

Now either  $|\D \cap S^k|= |Q_1^m|=d_1|Q^m_0|$ and $|\D \cap S^{k+1}|= |Q_2^m|=d_2|Q^m_0|$, 
or vice versa. At the same time, {\color{black}up to post-composition with isometries,} we have that for some $r,r'$,  
$$F(S_k)= U_{rM}\times U_s \times [0, \log{M})$$ 
and 
$$F(S_{k+1})=U_{r'M}\times U_s \times [0, \log{M}).$$ 
%where $|r-r'|<\epsilon$. 
By Equations (\ref{eqnone}), (\ref{thearray1}) and (\ref{thearray2}), 
\bea
|r-r'| &\leq& (1+ \lambda )\frac{|f_\ell(0) - f_\ell(M^2)|}{M^2} -(1-\lambda)\frac{|f_\ell(0) - f_\ell(M^2)|}{M^2} \leq 2\lambda L.
\eea
Let $M_0$ and $\epsilon_0$ be as specified by Lemma \ref{getM0} and let $\lambda_0 =\frac{\epsilon_0}{2L}$.
Then we have that if $M \geq M_0$ and $\lambda \leq \lambda_0$, then $f: \D \to \Gamma$ cannot be $K$-biLipschitz. 
\end{proof}

\begin{lemma}\label{last step} Suppose $f_\ell: \R \to \R$ is a map. If there exist $M_0$ and $\lambda_0$ such that for all $M \geq M_0$ and $\lambda \leq \lambda_0$ we have that for at least one $i \in \{ 1, \ldots, M \}$, 
$$ \frac{|f_\ell((i-1)M) - f_\ell(iM)|}{M} > (1+\lambda) \frac{|f_\ell(0) - f_\ell(M^2)|}{M^2},$$
then $f_\ell$ cannot be $L$-Lipschitz for any $L$. 
%\begin{equation}\label{eqnone}
%\frac{|f_\ell((i-1)M) - f_\ell(iM)|}{M} 
%\leq 
%(1+\lambda)     \frac{|f_\ell(0) - f_\ell(M^2)|}{M^2}
%\end{equation}
\end{lemma}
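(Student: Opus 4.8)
The plan is to prove the contrapositive in the following sense: assume $f_\ell$ is $L$-Lipschitz and derive a contradiction with the hypothesis. If $f_\ell$ is $L$-Lipschitz, then in particular for every $M$ and every $i$ we have the crude bound $|f_\ell((i-1)M) - f_\ell(iM)| \leq LM$, so the left-hand ratios $\frac{|f_\ell((i-1)M)-f_\ell(iM)|}{M}$ are all bounded by $L$. The key observation is that these $M$ ratios average to $\frac{1}{M}\sum_{i=1}^M \frac{|f_\ell((i-1)M)-f_\ell(iM)|}{M}$, which by the triangle inequality is at least $\frac{1}{M^2}\left|\sum_{i=1}^M \big(f_\ell(iM)-f_\ell((i-1)M)\big)\right| = \frac{|f_\ell(M^2)-f_\ell(0)|}{M^2}$. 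So the average of the left ratios is $\geq$ the right-hand quantity $\frac{|f_\ell(0)-f_\ell(M^2)|}{M^2}$ appearing in the statement.

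Now here is the tension to exploit. Suppose, toward a contradiction, that the conclusion of the lemma fails, i.e. there exist $M_0$ and $\lambda_0$ such that for \emph{all} $M \geq M_0$ and \emph{all} $\lambda \leq \lambda_0$ it is \emph{not} the case that some index $i$ satisfies $\frac{|f_\ell((i-1)M)-f_\ell(iM)|}{M} > (1+\lambda)\frac{|f_\ell(0)-f_\ell(M^2)|}{M^2}$. Fixing any $\lambda \in (0,\lambda_0]$, this says that for all $M \geq M_0$ and all $i$,
\begin{equation*}
\frac{|f_\ell((i-1)M)-f_\ell(iM)|}{M} \leq (1+\lambda)\frac{|f_\ell(0)-f_\ell(M^2)|}{M^2}.
\end{equation*}
But then Lemma \ref{getM02} (with this $M$ and $\lambda$, taking $M$ also of the form $N_0^{2^m}$ and large enough, and $\lambda \leq \lambda_0$ as needed there) tells us precisely that $f:\mathcal D \to \Gamma$ cannot be $K$-biLipschitz. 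Since $f_\ell$ being $L$-Lipschitz (for the companion constant $L$) is exactly what follows from $f$ being $K$-biLipschitz, and conversely the hypothesis of the present lemma is designed to be the negation of the stretching hypothesis \eqref{eqnone} of Lemma \ref{getM02}, the statement reduces to chaining these implications. More carefully: if $f_\ell$ \emph{were} $L$-Lipschitz, then it would arise as the lower companion map of a $K$-biLipschitz $f$, and the hypothesis of this lemma would then force inequality \eqref{eqnone} to fail for suitable $M,\lambda$ — but the failure of \eqref{eqnone} is a genuinely different statement from \eqref{eqnone} holding, so the logical role here is that the hypothesis directly contradicts what Lemma \ref{getM02} requires, ruling out the Lipschitz bound.

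The cleanest way to write it: assume $f_\ell$ is $L$-Lipschitz for some $L$. Then for every $M$ of the form $N_0^{2^m}$ with $M \geq M_0$ and every $\lambda \leq \lambda_0$, the hypothesis gives an index $i$ with $\frac{|f_\ell((i-1)M)-f_\ell(iM)|}{M} > (1+\lambda)\frac{|f_\ell(0)-f_\ell(M^2)|}{M^2}$; equivalently, the negation — namely that \eqref{eqnone} holds for all $i = 1,\dots,M$ — \emph{fails}. We want to conclude something false. The intended reading is that this lemma closes the argument by combining with the averaging bound above: the averaging bound forces the left ratios to have average $\geq \frac{|f_\ell(0)-f_\ell(M^2)|}{M^2}$, which is consistent with one of them exceeding $(1+\lambda)$ times that value, so no contradiction comes from averaging alone; instead the contradiction is that Lemma \ref{getM02} has already shown that for a $K$-biLipschitz map, \eqref{eqnone} \emph{must} hold for all $i$ (otherwise we are in a position to iterate the construction of $\mathcal D$ through all scales $M = N_0^{2^m}$ and produce the non-rectifiability), so the hypothesis is incompatible with $L$-Lipschitzness.

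I expect the main obstacle to be a purely expository one: making precise the quantifier juggling between "there exists $i$ with strict inequality" (this lemma's hypothesis) and "for all $i$, the non-strict reverse inequality" (Lemma \ref{getM02}'s hypothesis), and spelling out that the failure of the latter for suitable $(M,\lambda)$ — together with Lemma \ref{getM02} — is what rules out $K$-biLipschitz, hence rules out $L$-Lipschitz $f_\ell$. There is no hard estimate here; the averaging inequality $\frac{1}{M^2}|f_\ell(M^2)-f_\ell(0)| \leq \frac{1}{M}\sum_i \frac{|f_\ell((i-1)M)-f_\ell(iM)|}{M}$ via the triangle inequality is the only computation, and it is one line. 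So the proof will be short: invoke $L$-Lipschitzness, note this makes $f_\ell$ the companion of a $K$-biLipschitz $f$, apply the hypothesis to get failure of \eqref{eqnone}, and cite Lemma \ref{getM02} for the contradiction.
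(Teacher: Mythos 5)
Your proposal does not prove the lemma: it mistakes the logical structure of the argument and ends up being circular. Lemma~\ref{last step} is a self-contained statement about maps $f_\ell:\R\to\R$; it makes no reference to $\D$, $\Gamma$, or the ambient group, and it must be proved purely by real-analysis estimates. You instead try to derive it from Lemma~\ref{getM02}. But Lemma~\ref{getM02} and Lemma~\ref{last step} are complementary inputs to Theorem~\ref{sol:thm}, not consequences of each other: Lemma~\ref{getM02} says that \emph{if} inequality~\eqref{eqnone} holds for all $i$, then $f$ is not $K$-biLipschitz; Lemma~\ref{last step} must handle the opposite alternative — that~\eqref{eqnone} fails for some $i$ at every scale — and conclude $f_\ell$ is not biLipschitz. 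Writing that "Lemma~\ref{getM02} requires~\eqref{eqnone}" reverses the implication: that lemma is an if-then, not an assertion that~\eqref{eqnone} must hold. Similarly, "$f_\ell$ $L$-Lipschitz $\Rightarrow$ it arises as the companion map of a $K$-biLipschitz $f$" is not a valid inference; companion maps come from $f$, not the other way around. Your averaging inequality is correct but, as you yourself note, does not produce a contradiction; it plays no role in the paper's argument and cannot replace the missing estimate.

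The paper's proof is a direct iteration at the nested scales $M_0, M_1=M_0^2,\ldots,M_t=M_{t-1}^2$: at each scale the hypothesis produces an interval of length $M_{j-1}$ whose stretch exceeds $(1+\lambda)$ times the average stretch over $[0,M_j]$, and chaining these gives a lower bound of the form $(1+\lambda)^t\frac{|f_\ell(0)-f_\ell(M_t)|}{M_t}$ for the stretch of a single interval of length $M_0$. The upper Lipschitz bound caps the left side by $L$, and the \emph{lower} Lipschitz bound gives $\frac{|f_\ell(0)-f_\ell(M_t)|}{M_t}\geq 1/L$, hence $L>(1+\lambda)^t/L$, which fails for $t$ large. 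Two points your writeup misses that are essential here: (i) the argument genuinely needs the lower biLipschitz bound (without it the final quantity could tend to $0$), so the lemma is really about biLipschitz maps, as its application in Theorem~\ref{sol:thm} confirms; (ii) the whole point is the geometric accumulation of factors $(1+\lambda)$ across an unbounded chain of scales, which is a standard Burago--Kleiner/McMullen style argument and has no analogue in what you wrote. To fix your proof, drop the appeal to Lemma~\ref{getM02} entirely, and carry out the scale-iteration estimate.
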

\begin{proof} Suppose $f_\ell$ is $L$-Lipschitz. Pick $t$ such that 
$$\frac{(1+\lambda)^t}{L} >  {L},$$
and recursively define $M_j=M_{j-1}^2$ for $j=1, \ldots, t$. By assumption, for some $i_t$ we must have 
$$\frac{ |f_\ell((i_t-1)M_{t-1}) - f_\ell(i_t M_{t-1})|}{M_{t-1}} > (1+\lambda) \frac{|f_\ell(0) - f_\ell(M_t)|}{M_t}.$$
Repeating this process, we get  
\bea
\frac{ |f_\ell((i_1 -1)M_{0}) - f_\ell(i_1 M_{0})|  }{M_0}
&>&  (1+\lambda) \frac{|f_\ell((i_{2}-1)M_1) - f_\ell(i_{2}M_1)|}{M_1}\\
   & \vdots &  \hspace{1in} \vdots\\
%   & \vdots &  \hspace{1in} \vdots\\
 %\frac{|f_\ell((i_2-1)M_{t-2}) - f_\ell(i_2M_{t-2})|}{M_{t-2}} 
 &>& (1+\lambda)^{2}\frac{ |f_\ell((i_t-1)M_{2}) - f_\ell(i_tM_{t-1})| }{M_{t-1}} \\
&>& (1+\lambda)^t \frac{|f_\ell(0) - f_\ell(M_t)|}{M_t}. 
\eea
But then
$$ {L} \geq \frac{ |f_\ell((i_1-1)M_{0}) - f_\ell(i_1 M_{0})|  }{M_0}
>  (1+\lambda)^t \frac{|f_\ell(0) - f_\ell(M_t)|}{M_t}\geq \frac{(1+\lambda)^t}{L} > L,$$
which is a contradiction. 
\end{proof}

%So now we have that if $f: \D \to \Gamma$ is $K$-biLipschitz then for at least one $i$,
%% 
%$$ \frac{|f_\ell((i-1)M) - f_\ell(iM)|}{M} > (1+\lambda) \frac{|f_\ell(0) - f_\ell(M^2)|}{M^2}.$$
%%
%Now we pick $t$ such that 
%$$\frac{(1+\lambda)^t}{L} >  \frac{L}{M_0}.$$
%Then we fix $M_0$ as in Lemma \ref{getM02} and set $M_j=M_{j-1}^2$.
%Now suppose that $f:\D \to \Gamma$ is $K$-biLipschitz. Then $f_\ell$ must be $L$ biLipschitz. 
%For some $i_1$ we must have 
%
%$$\frac{ |f_\ell((i_1-1)M_{t-1}) - f_\ell(i_1M_{t-1})|}{M_{t-1}} > (1+\lambda) \frac{|f_\ell(0) - f_\ell(M_t)|}{M_t}.$$
%
%Likewise 
%
%\bea \frac{|f_\ell((i_2-1)M_{t-2}) - f_\ell(i_2M_{t-2})|}{M_{t-2}} &>& (1+\lambda)\frac{ |f_\ell((i_1-1)M_{t-1}) - f_\ell(i_1M_{t-1})| }{M_{t-1}} \\
%&>& (1+\lambda)^2 \frac{|f_\ell(0) - f_\ell(M_t)|}{M_t}. 
%\eea
%Repeating this process we get that 
%$$ \frac{L}{M_0} \geq |f_\ell((i_t-1)M_{0}) - f_\ell(i_tM_{0})|  
%>  (1+\lambda)^t \frac{|f_\ell(0) - f_\ell(M_t)|}{M_t}\geq \frac{(1+\lambda)^t}{L}$$
%which is a contradiction. 

\begin{theorem}\label{sol:thm} Any lattice in $SOL$ contains a non-rectifiable Delone subset.\end{theorem}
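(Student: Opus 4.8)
The plan is to assemble the lemmas of this section into a proof by contradiction. Suppose $\D$, the Delone subset of $\Gamma$ constructed in Section~\ref{tiling:sec}, is rectifiable; then there is a $K$-biLipschitz map $f: \D \to \Gamma$ for some fixed $K$. After composing with left multiplication by a group element we may assume $f$ coarsely fixes the identity, so by Eskin--Fisher--Whyte it is within bounded distance $A = A(K,C)$ of a companion quasi-isometry $F(x,y,t) = (f_\ell(x), f_u(y), t)$ with $f_\ell, f_u$ being $L$-biLipschitz maps of $\R$, where $L = L(K,C)$ (and, as noted in the excerpt, the permutation of the first two factors can be ignored). The key point is that the constants $M_0$ and $\lambda_0$ produced by Lemma~\ref{getM02} depend only on $K$ and $C$, hence only on $f$, not on $M$.

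Next I would feed this $f_\ell$ into Lemma~\ref{last step}. Its hypothesis is exactly the negation of the hypothesis of Lemma~\ref{getM02}: Lemma~\ref{getM02} says that if inequality~(\ref{eqnone}) holds for all $i = 1, \ldots, M$ for some admissible $M = N_0^{2^m} \geq M_0$ and some $\lambda \leq \lambda_0$, then $f$ cannot be $K$-biLipschitz. Since we have assumed $f$ \emph{is} $K$-biLipschitz, for every such $M$ and every $\lambda \leq \lambda_0$ there must exist at least one index $i \in \{1, \ldots, M\}$ for which
$$\frac{|f_\ell((i-1)M) - f_\ell(iM)|}{M} > (1+\lambda)\frac{|f_\ell(0) - f_\ell(M^2)|}{M^2}.$$
One small bookkeeping point to address: Lemma~\ref{last step} as stated quantifies over all $M \geq M_0$, whereas here we only know the inequality for $M$ of the special form $N_0^{2^m}$. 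This is harmless because the recursion $M_j = M_{j-1}^2$ in the proof of Lemma~\ref{last step} stays within the set $\{N_0^{2^m}\}$ once started there: choosing the initial $M_0$ in that recursion to itself be of the form $N_0^{2^m}$ keeps every $M_j$ admissible. So Lemma~\ref{last step} applies and concludes that $f_\ell$ cannot be $L$-Lipschitz for any $L$.

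This contradicts the fact that $f_\ell$ is $L$-biLipschitz (in particular $L$-Lipschitz) with $L$ depending only on $K$ and $C$. Therefore no $K$-biLipschitz map $f: \D \to \Gamma$ exists for any $K$, i.e. $\D$ is not biLipschitz equivalent to $\Gamma$. Since $\D$ is by construction a Delone subset of $\Gamma$ (it is uniformly discrete, being a subset of the uniformly discrete set $\Gamma$, and coarsely dense because outside the region $U$ it equals $\Gamma$ and inside $U$ the points $Q^m_j$ are arranged to remain coarsely dense), $\D$ is a non-rectifiable Delone subset of $\Gamma$, proving the theorem.

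The main obstacle — and the reason the preceding lemmas are set up the way they are — is ensuring the uniformity of constants: the whole argument collapses unless $L$, $M_0$ and $\lambda_0$ depend only on $K$ and $C$ and not on the scale $M$. This is exactly why one needs the rigidity of quasi-isometries of $SOL$ (to get the companion map $F$ with scale-independent $L$) together with the F\o lner estimates of Lemma~\ref{getM0}, and why the Delone set was built so that the two F\o lner pieces $S^k, S^{k+1}$ always have densities $d_1 |Q^m_0|$ and $d_2|Q^m_0|$ with the \emph{fixed} gap $|d_1 - d_2|$ at every scale.
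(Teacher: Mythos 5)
Your proof is correct and follows essentially the same approach as the paper: assume a $K$-biLipschitz $f:\D\to\Gamma$ exists, invoke Eskin--Fisher--Whyte to obtain the companion map with scale-independent $L$, then combine Lemma~\ref{getM02} (in contrapositive) with Lemma~\ref{last step} to reach a contradiction. Your observation that the recursion $M_j=M_{j-1}^2$ in the proof of Lemma~\ref{last step} stays within the admissible family $\{N_0^{2^m}\}$ once started there is a careful touch that addresses a bookkeeping point the paper leaves implicit.
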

\begin{proof} Let $\mathcal{\D} \subset \Gamma$ be the set constructed in Section \ref{tiling:sec} and suppose that $f:\mathcal{D} \to \Gamma$ is a $K$-biLipschitz map. Let $F(x,y,t)=(f_\ell(x), f_u(y), t)$ be the companion map to $f$.
Then $f_\ell$ must be $L$-biLipschitz for some $L$. 
Now by Lemma \ref{getM02}, we must have that there exist $M_0$ and $\lambda_0$ 
such that if $M\geq M_0$ and $\lambda \leq \lambda_0$, then for some $i\in\{1, \ldots, M\}$, 
$$ \frac{|f_\ell((i-1)M) - f_\ell(iM)|}{M} > (1+\lambda) \frac{|f_\ell(0) - f_\ell(M^2)|}{M^2}.$$
However, Lemma \ref{last step} would then show that in this case $f_\ell$ cannot be $L$-biLipschitz for any $L$, which is a contradiction.
% \footnote{Think about how constant dependencies work here}
\end{proof}
%%%%%%%%%%%%%%%%%%%%%%%%%%%%%%%%%%%%%%%%%%%%%%%%%%%%%%%%%%%%%%%%

\section{Higher Rank analogues of SOL}\label{hr:sec}

%(WILL THIS WORK AS LONG AS WE HAVE ONE DIM 1 BOUNDARY OR DO WE NEED THEM TO ALL BE DIM 1?)

In this section we generalize our arguments from lattices in $SOL$ to lattices in a class of abelian-by-abelian solvable Lie groups.  
%Another class of groups for which we can use similar methods to our analysis of $SOL$ is certain abelian-by-abelian solvable Lie groups. 
Namely, let $G_\phi=\R^{n+1} \rtimes_\phi \R^{n}$, where $\phi: {{\R^{n}}} \to SL_{n+1}(\R)$ can be simultaneously diagonalized so that for each $\mathbf{t}\in\R^{n}$ the map $\phi(\mathbf{t}):\R^{n+1} \to \R^{n+1}$ is multiplication by the exponential of 
$$\bm \alpha_1(\mathbf{t}) & 0 &  \cdots & 0 \\ 0 & \alpha_2(\mathbf{t})  & 0  & \vdots \\ \vdots& 0 &  \vdots & 0 \\  0 & \cdots&   0 & \alpha_{n+1}(\mathbf{t}) \fm.$$
We call $\alpha_i: \R^{n} \to {{\R}}$ the  \emph{roots} associated to $\phi$. %and $V_{\alpha_i}$ the eigenspaces for $e^{\alpha_i(t)}$.
%In order for $G_\phi$ to contain lattices we must have that $\sum_i \alpha_i =0$. 
%\marginpar{I do not understand: otherwise, the target is not $SL$, just $GL$.}
We  consider the case where $\alpha_i$ has the form $\alpha_i(\mathbf{t})= a_i t_i$ for $i \leq n$, with $a_i>0$, so that $\alpha_{n+1}(\mathbf{t})= -(a_1t_1+ \cdots + a_{n}t_{n})$. Our theorem also holds in the slightly more general case where some of the $a_i$ are negative but we omit this more general case primarily for the sake of ease of notation. 
In order to be able to construct tilings, however, we further restrict the $a_i$ so there exists a $t\in \R$ such that $e^{a_i t} \in 2\Z$ for all $i$. We call such an abelian-by-abelian Lie group \emph{even-scaling}.
 
%(We could be more general here (i.e. not assume that $a_i>0$ for all $i\leq n$) but that would just make notation later more cumbersome.)

In the case where, for each $i$, the \emph{rootspace} $V_{\alpha_i}$ associated to $\alpha_i$  (i.e.
 the subspace on which $\phi$ acts by multiplication by $e^{\alpha_i(\mathbf{t})}$ for all $\mathbf{t} \in \R^{n}$ ) 
is one dimensional, such an abelian-by-abelian Lie group will be called \emph{boundary one-dimensional}. 
 We will write $x_i$ for the coordinate representing $V_{\alpha_i}$. 
 %and because of the coordinates we have chosen $V_{\alpha_i}$ just corresponds to the span of the $i$th standard basis vector). 
From Peng \cite{P1,P2}, we have the following theorem on the structure  of self quasi-isometries of such a $G_\phi$.

\begin{theorem}[Peng]\label{pengorigthm}Any self $(K,C)$ quasi-isometry of $ G_\phi=\R^{n+1} \rtimes_\phi \R^{n}$ is, up to permuting the rootspaces, at bounded distance $A$ from a  map of the form
$$(f_1 \times \cdots \times f_{n+1}) \times id$$
%\footnote{only true if $f$ coarsely preserves the identity in $\Omega$}
where $f_i$ is a $L$-biLipschitz map of $V_{\alpha_i}$.\end{theorem}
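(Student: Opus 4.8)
This theorem is quoted from Peng's papers, so the proposal is really a sketch of how the proof goes, following the structure of the $SOL$ case generalized to higher rank. The plan is to analyze the quasi-isometry $F$ by looking at how it behaves on the horospherical foliations determined by the rootspaces, showing that it must permute them and act on each in a controlled way.

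First I would set up the geometry: $G_\phi = \R^{n+1}\rtimes_\phi \R^n$ carries a family of foliations, one for each root $\alpha_i$, whose leaves are the cosets of the subgroup $V_{\alpha_i}\rtimes 0$ (together with the $\R^n$ direction), and these leaves contract or expand at exponential rate $e^{\alpha_i(\mathbf t)}$ as one moves in the $\mathbf t$-direction. Because the group is boundary one-dimensional, each $V_{\alpha_i}$ is a line, and the boundary at infinity of $G_\phi$ decomposes into pieces indexed by the roots, each piece being (a circle or line built from) the corresponding $V_{\alpha_i}$. The key step is to show that a $(K,C)$ quasi-isometry $F$ coarsely preserves this foliated structure: it must send leaves of the $\alpha_i$-foliation to within bounded Hausdorff distance of leaves of some $\alpha_{\sigma(i)}$-foliation, for a permutation $\sigma$ of $\{1,\dots,n+1\}$. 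This is the heart of Peng's work and uses a coarse-differentiation / averaging argument: one subdivides large boxes, tracks how $F$ distorts the $\mathbf t$-coordinate and the rootspace coordinates on a positive-measure set of scales, and shows the distortion pattern forces the permutation to be consistent and independent of location.

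Given that leaves go to leaves, the next step is to extract the induced maps. Since $F$ coarsely respects the $\R^n$-direction (it sends a geodesic in the $\mathbf t$-direction to within bounded distance of another such geodesic, because these are the directions along which all but one leaf contracts), after composing with a permutation of rootspaces and a bounded-distance adjustment we get that $F$ is at bounded distance from a map of the product form $(f_1\times\cdots\times f_{n+1})\times \psi$ where $\psi$ is a quasi-isometry of $\R^n$ and each $f_i$ is a quasi-isometry of the line $V_{\alpha_i}$. Then one upgrades: a self-quasi-isometry of $\R^n$ that arises this way must be at bounded distance from an affine/isometric map (here, after normalization, the identity up to the finite symmetry already accounted for), and each $f_i$ must be a genuine biLipschitz homeomorphism of $\R$ — the biLipschitz constant $L$ being controlled by $K,C$ once we know $f_i$ is a quasi-isometry of $\R$ that is coarsely monotone, since boundary-one-dimensionality rules out the folding that would destroy the biLipschitz property in higher-dimensional rootspaces.

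The main obstacle is exactly the coarse-differentiation step showing that the leaves of the horospherical foliations are coarsely preserved and that the permutation $\sigma$ is globally well-defined; this is where all of Peng's technical machinery lives (and where the even-scaling and boundary-one-dimensional hypotheses are used to keep the combinatorics tractable). The passage from ``coarsely foliation-preserving'' to the explicit product form is comparatively formal, and the final promotion of each $f_i$ from quasi-isometry to $L$-biLipschitz map of $\R$ is the same kind of elementary argument as in the $SOL$ case — any quasi-isometry of $\R$ that is coarsely increasing is at bounded distance from a biLipschitz homeomorphism, with constants depending only on $K$ and $C$.
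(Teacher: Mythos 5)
The paper does not prove Theorem~\ref{pengorigthm}; it cites it directly from Peng \cite{P1,P2} as a black box, exactly as Section~\ref{sol:sec} cites the $SOL$ analogue from Eskin--Fisher--Whyte \cite{EFW1,EFW2}. So there is no ``paper's own proof'' for your sketch to be measured against, and you correctly flag that you are outlining an external argument rather than reproducing one.

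As an outline of Peng's strategy your sketch is in the right neighborhood: coarse differentiation of the quasi-isometry over boxes, coarse preservation of the horospherical foliations, a globally consistent permutation of rootspaces, and then extraction of boundary maps. But two of your explanatory claims are off. First, you say boundary one-dimensionality ``rules out the folding that would destroy the biLipschitz property in higher-dimensional rootspaces''; this misattributes the role of that hypothesis. Peng's theorem produces biLipschitz maps $f_i$ on the rootspaces $V_{\alpha_i}$ in general, including when $\dim V_{\alpha_i}>1$. Boundary one-dimensionality (together with even scaling) is imposed in \emph{this} paper because the tiling/counting argument in Section~\ref{hr:sec} needs each $f_i$ to be a biLipschitz map of a line so that Lemmas~\ref{gett0} and \ref{last step} apply; it is not a hypothesis of Peng's rigidity theorem. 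Second, the promotion of the boundary maps to biLipschitz does not follow from an elementary ``coarsely monotone quasi-isometry of $\R$ is biLipschitz up to bounded error'' step applied after the product decomposition. The biLipschitz control on the $f_i$ comes from the boundary geometry itself: quasi-isometries of these negatively-curved-in-each-direction factors induce maps on the parabolic visual boundaries that are quasi-similarities/biLipschitz for the visual metric, and the constant $L=L(K,C)$ is produced by that structural argument together with the normalization that the quasi-isometry coarsely fixes a basepoint. Your sketch has the order of logic roughly right but attributes the key quantitative conclusion to the wrong mechanism.
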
 

Again, we have that for any Delone set $\D \subset G_\phi$ and any lattice $\Gamma \subset G_\phi$, every biLipschitz map 
$f: \D \to \Gamma$ induces a self quasi-isometry of $G_\phi$ that is bounded distance from a map of the form specified by Theorem \ref{pengorigthm}. As before, we call $F$ the \emph{companion} quasi-isometry to $f$.
Without loss of generality, we can assume that $F$  does not permute the root spaces.
 %since we can always compose $f$ with an isometry that undoes any permutation of root spaces  and that maps $\Gamma$ bijectively to another lattice.  
 Additionally, we can assume that $f$ coarsely fixes the identity, and then $L=L(K,C)$ and $A=A(K,C)$.
%Again we can assume that for any Delone set $\D$ and lattice $\Gamma$, any biLipschitz map $f: \D \to \Gamma$ induces a quasi-isometry that does not permute the factors since we can always compose $f$ with an isometry that undoes any permutation and maps $\Gamma$ bijectively to another lattice. 

%\begin{lemma}\label{boundaryconsts} If $F$ is a $(K,C)$ quasi-isometry then $f_\ell$ 
%\end{lemma}

\subsection{F\o lner sets.}
We define standard F\o lner sets following (with slight modifications) the definitions given in 2.2.4 of \cite{P1}, where these F\o lner sets are called \emph{boxes}.
We first chose $\Omega=[0,1]^{n} \subset \R^{n}$ and note that $\Omega$ satisfies the \emph{well rounded} condition of 2.2.4 of \cite{P1}. 
Note also that for $i\leq n$ we have  $\alpha_i(\Omega)=[0, a_i]$, whereas $\alpha_{n+1}(\Omega)=[-\sum_{i=1}^n a_i,0]$.
We define $a_{n+1}:=0$ to simplify notation below.

Let  $b_{\alpha_j}(t)=[0,t] \subset V_{\alpha_j}$. (This slightly differs from the definition of $b_{\alpha_j}(t)$ in \cite{P1}).
Define $B(\Omega) $, the \emph{box} associated to $\Omega$, as the union of left translates of $\Omega$ over all elements of 
$\prod_{j=1}^{n+1} b_{\alpha_j}(e^{\max{\alpha_j(\Omega)}})$, that is, 
$$B(\Omega) := \left(\prod_{j=1}^{n+1} b_{\alpha_j} (e^{\max{\alpha_j(\Omega)}})\right) \Omega.$$
{{With a similar definition for $r \Omega$}}, we get that 
$$B(r\Omega)= \left(\prod_{j=1}^{n+1} b_{\alpha_j} (e^{ra_j})\right)r\Omega= \left(\prod_{j=1}^{n+1} [0,e^{ra_j}]\right)  r\Omega = \left(\prod_{j=1}^{n} [0,e^{ra_j}]\right) \times [0,1]\times  r\Omega .$$

Now, since the volume element is given by 
$$e^{-\alpha_1(\mathbf{t})}dx_1 \wedge \cdots \wedge e^{-\alpha_{n+1}(\mathbf{t})}dx_{n+1} 
\wedge d\mathbf{t}=dx_1 \wedge \cdots \wedge dx_{n+1} \wedge d\mathbf{t},$$ 
the volume of a box $B(r\Omega)$ is given by
 $$|B(r\Omega)|=\left(\prod_{j=1}^n e^{ra_j}\right)r^n |\Omega|=\left(\prod_{j=1}^n e^{ra_j}\right)r^n.$$
 
\begin{lemma}[Lemma 2.2.7 in \cite{P1}]
%Lemma 2.2.7 of \cite{P1} we have that 
%$$ \left|\frac{N_\epsilon ( \partial(B(r\Omega))}{B(r\Omega)} \right|= O\left(\frac{\epsilon}{\diam(B(r\Omega))}\right)$$
\begin{equation}\label{eqn:folner} \left|\frac{\partial_\epsilon B(r\Omega)}{B(r\Omega)} \right|= O\left(\frac{\epsilon}{\diam(B(r\Omega))}\right).\end{equation}
\end{lemma}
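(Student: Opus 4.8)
The plan is to estimate the volume of the $\epsilon$-boundary $\partial_\epsilon B(r\Omega)$ directly from the explicit product description $B(r\Omega) = \left(\prod_{j=1}^{n}[0,e^{ra_j}]\right)\times[0,1]\times r\Omega$, and compare it with the volume $|B(r\Omega)| = \left(\prod_{j=1}^n e^{ra_j}\right)r^n$ computed above. The key observation is that the box is, up to the exponential distortion built into the metric, a ``rectangular'' region whose side lengths in the various coordinate directions are controlled: the $x_j$-direction has length $e^{ra_j}$ for $j\le n$, the $x_{n+1}$-direction has length $1$, and the $\mathbf t$-directions have length $r$. The diameter of $B(r\Omega)$, measured in the left-invariant metric quasi-isometric to the one displayed for $SOL$ (and its higher-rank analogue), is comparable to $\max_j\{ra_j\} + r$, i.e. of order $r$; so the right-hand side of \eqref{eqn:folner} is of order $\epsilon/r$.

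The main step is therefore to show $|\partial_\epsilon B(r\Omega)| = O\!\left(\frac{\epsilon}{r}\right)|B(r\Omega)|$. I would argue that a point lies in $\partial_\epsilon B(r\Omega)$ only if it is within metric distance $\epsilon$ of the topological boundary of the product region, and then stratify that boundary into its faces. A face on which the $\mathbf t$-coordinate is extremal (i.e. $\mathbf t$ lies on $\partial(r\Omega)$) contributes a slab of $\mathbf t$-thickness $O(\epsilon)$; since $r\Omega$ has $(n-1)$-dimensional boundary of measure $O(r^{n-1})$, and the fibre over each such $\mathbf t$ has Euclidean volume $\prod_{j=1}^n e^{ra_j}$ in the $x$-coordinates (using that the volume form is the standard Lebesgue form, as noted), this face contributes $O(\epsilon)\cdot O(r^{n-1})\cdot\prod_j e^{ra_j} = O(\epsilon/r)\,|B(r\Omega)|$. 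A face on which some $x_j$ ($j\le n$) is extremal is more delicate because the metric contracts the $x_j$-direction by the factor $e^{-\alpha_j(\mathbf t)}$: near such a face one needs metric distance $\epsilon$, which at parameter $\mathbf t$ corresponds to Euclidean $x_j$-thickness $e^{\alpha_j(\mathbf t)}\epsilon$. However, the very definition of the box — translating $\Omega$ over $b_{\alpha_j}(e^{ra_j})$, i.e. over $[0,e^{ra_j}]$ in $V_{\alpha_j}$ — is engineered precisely so that $\alpha_j$ ranges only over a bounded window as one moves through the box, so $e^{\alpha_j(\mathbf t)} = O(1)\cdot$ (something that stays comparable across the relevant range), and one still gets a slab whose contribution is at most $O(\epsilon)\cdot\prod_{i\ne j}e^{ra_i}\cdot r^n = O(\epsilon/e^{ra_j})\,|B(r\Omega)| = O(\epsilon/r)\,|B(r\Omega)|$ after using $e^{ra_j}\gtrsim r$ for $r$ large. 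The $x_{n+1}$-face (where $x_{n+1}\in\{0,1\}$) is handled the same way, with the expanding factor $e^{-\alpha_{n+1}(\mathbf t)} = e^{\sum a_i t_i}$ again kept under control by the box construction, and contributes $O(\epsilon)\cdot\prod_{j\le n}e^{ra_j}\cdot r^n$, which is $O(\epsilon)\,|B(r\Omega)|$ — but this is dominated by the $x_{n+1}$-thickness being only $1$, so in fact one picks up no extra factor of $r$ here; rechecking, the dominant terms are all $O(\epsilon/r)|B(r\Omega)|$, matching $O(\epsilon/\diam)$.

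Summing the finitely many faces, $|\partial_\epsilon B(r\Omega)| = O(\epsilon/r)\,|B(r\Omega)|$, and since $\diam(B(r\Omega)) \asymp r$ this is exactly \eqref{eqn:folner}; letting $r\to\infty$ shows the boxes form a F\o lner sequence. The step I expect to be the main obstacle is the bookkeeping for the ``contracted'' faces — the ones where an $x_j$ with $j\le n$, or $x_{n+1}$, is extremal — because there the correspondence between metric $\epsilon$-neighbourhoods and Euclidean thicknesses involves the position-dependent factor $e^{\pm\alpha_j(\mathbf t)}$, and one must invoke the specific choice of translation ranges $b_{\alpha_j}(e^{ra_j})$ in the definition of $B(r\Omega)$ (rather than, say, $b_{\alpha_j}(e^{2ra_j})$) to keep that factor comparable to a constant over the box; this is precisely the content of the ``well rounded'' hypothesis on $\Omega$ and the reason the box is defined the way it is, and it is the place where a careless estimate would lose the needed cancellation. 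Since the statement is quoted as Lemma 2.2.7 of \cite{P1}, I would in practice reference that proof, sketching only the above so the reader sees why the particular box construction is forced.
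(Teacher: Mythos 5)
The general strategy you adopt—decompose the $\varepsilon$-boundary of the box into its faces, a $\mathbf t$-face and one $x_j$-face for each $j$, and estimate each separately—is indeed the right one, and is the same route the paper implicitly follows: the paper itself merely cites Lemma 2.2.7 of \cite{P1} for this statement, but the proof of the subsequent Lemma~\ref{lem:border} (the $u$-modified generalization) carries out exactly this face decomposition, splitting $|\partial_\varepsilon B_u(r\Omega)|$ into a term $(1)$ for the $x$-faces and a term $(2)$ for the $\mathbf t$-face.

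However, there is a genuine gap at the critical step, namely your estimate of the $x_j$-face contributions for $j\le n$. You assert that \emph{``the very definition of the box \dots is engineered precisely so that $\alpha_j$ ranges only over a bounded window as one moves through the box,''} and that this is the content of the well-roundedness of $\Omega$. Both claims are false. Over $\mathbf t\in r\Omega=[0,r]^n$ the root $\alpha_j(\mathbf t)=a_jt_j$ ranges over the whole interval $[0,ra_j]$, which grows linearly in $r$; nothing about the translation ranges $b_{\alpha_j}(e^{ra_j})$ (those live in the $x$-directions, not the $\mathbf t$-directions) confines $\alpha_j$, and well-roundedness is a regularity condition on the shape of $\Omega\subset\R^n$ having nothing to do with the size of the roots. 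Once one accepts that $e^{\alpha_j(\mathbf t)}$ is unbounded (up to $e^{ra_j}$), a naive sup-bound on the $x_j$-face integrand gives $|\partial_\varepsilon^{(j)}|\lesssim\varepsilon\,e^{ra_j}\prod_{i\ne j}e^{ra_i}\cdot r^n=\varepsilon|B(r\Omega)|$, which is $O(\varepsilon)$, \emph{not} $O(\varepsilon/r)$, of $|B(r\Omega)|$—so the lemma would not follow. The factor-of-$r$ savings that makes the lemma true comes from actually integrating rather than taking a supremum: because $\int_0^r e^{a_jt_j}\,dt_j=a_j^{-1}(e^{ra_j}-1)$ is only $\sim e^{ra_j}/a_j$ rather than $r\,e^{ra_j}$, one has
$$\int_{r\Omega}e^{\alpha_j(\mathbf t)}\,d\mathbf t=r^{n-1}\,a_j^{-1}(e^{ra_j}-1),$$
and hence the $x_j$-face contributes $\varepsilon\prod_{i\ne j}e^{ra_i}\cdot r^{n-1}a_j^{-1}(e^{ra_j}-1)\asymp\varepsilon\,|B(r\Omega)|/(ra_j)$, which is the desired $O(\varepsilon/r)|B(r\Omega)|$. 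This is precisely the computation made explicit in the proof of Lemma~\ref{lem:border}. In your argument, the right conclusion is reached only through two compensating errors: the false boundedness assumption would yield the (too small) bound $O(\varepsilon/e^{ra_j})|B|$, which you then coarsen via $e^{ra_j}\gtrsim r$ to $O(\varepsilon/r)|B|$; the inequalities happen to run in a direction that leaves the final statement intact, but the reasoning does not establish it.
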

Since a companion quasi-isometry stretches the various root spaces while keeping $\Omega$ fixed, we need to understand what happens to the volume and boundary of the box when we alter $B(r\Omega)$ slightly by keeping $\Omega$ fixed but changing  the size of the intervals in $\R^{n+1}$.

Given $u := (u_1, \ldots, u_{n+1})$, define the \emph{$u$-modified box} to be 
 $$B_{u}(r\Omega) :=\left(\prod_{j=1}^{n+1} [0, e^{ra_j + u_j}]\right) r\Omega.$$
Then 
$$|B_{u}(r\Omega)|=\left(\prod_{j=1}^{n+1} e^{ra_j+u_j}\right)r^n|\Omega|=e^{u_1+ \cdots + u_{n+1}}|B(r\Omega)|.$$ 
The following lemma shows that we also get behavior similar to Equation (\ref{eqn:folner}).

\begin{lemma}\label{lem:border}
Let $\|u\| := \sum |u_i|$. Then
\begin{equation} 
\label{eqn:ufolner} \frac{|\partial_\epsilon B_u(r\Omega)|}{|B_u(r\Omega)|}= O\left(\frac{\epsilon e^{\|u\|}}{\diam(B(r\Omega))}\right).\end{equation}
\end{lemma}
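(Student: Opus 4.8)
The plan is to reduce the estimate for the $u$-modified box $B_u(r\Omega)$ to the unmodified estimate (\ref{eqn:folner}) by comparing $B_u(r\Omega)$ with an ordinary box $B(r'\Omega)$ of a suitably larger parameter, or more directly by tracking how rescaling each interval $[0,e^{ra_j}]$ to $[0,e^{ra_j+u_j}]$ affects the $\epsilon$-neighborhood. First I would observe that the $\epsilon$-boundary of a product region is, up to constants, the union over $j$ of ``one coordinate near the ends of its interval, the rest unconstrained''; so $|\partial_\epsilon B_u(r\Omega)|$ is comparable to $\sum_j \epsilon \cdot \frac{|B_u(r\Omega)|}{e^{ra_j+u_j}}$ (with the appropriate adjustment for the $\Omega$-directions, which contribute a lower-order term since $\diam B(r\Omega)\to\infty$). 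Here one must remember that the metric on $G_\phi$ is not the product metric: distance in the $x_j$ direction at height $\mathbf t$ is weighted by $e^{-\alpha_j(\mathbf t)}$, and over the box $\mathbf t$ ranges over $r\Omega$, so $\alpha_j(\mathbf t)$ ranges over $[0,ra_j]$ for $j\le n$; the worst case (thinnest in metric terms, hence largest boundary contribution) is at the end of this range, which is already accounted for in the unmodified computation of Lemma~2.2.7.

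The key steps, in order, are: (1) write $|\partial_\epsilon B_u(r\Omega)| \le \sum_{j=1}^{n+1} (\text{boundary contribution from direction } j) + (\text{contribution from the } r\Omega \text{ directions})$; (2) bound each direction-$j$ contribution by $C\epsilon$ times the volume of the corresponding ``slab complement,'' which is at most $|B_u(r\Omega)|$ divided by the metric thickness of the $j$-th interval; (3) note that passing from $B(r\Omega)$ to $B_u(r\Omega)$ multiplies the volume by $e^{u_1+\cdots+u_{n+1}}$ but changes the thickness of interval $j$ by a factor $e^{u_j}$, so the ratio $|\partial_\epsilon B_u(r\Omega)|/|B_u(r\Omega)|$ picks up at worst a factor $e^{\|u\|}$ relative to $|\partial_\epsilon B(r\Omega)|/|B(r\Omega)|$; (4) apply (\ref{eqn:folner}) to conclude $|\partial_\epsilon B_u(r\Omega)|/|B_u(r\Omega)| = O(\epsilon e^{\|u\|}/\diam(B(r\Omega)))$. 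Since $\diam(B_u(r\Omega))$ and $\diam(B(r\Omega))$ differ by at most an additive $O(\|u\|)$ and a bounded multiplicative factor, replacing one by the other in the denominator only affects the implicit constant, so stating the bound with $\diam(B(r\Omega))$ is harmless.

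The main obstacle I anticipate is step (2): being careful that the weighting $e^{-\alpha_j(\mathbf t)}$ in the metric does not let the boundary be much larger than a naive Euclidean product estimate suggests. One has to check that the ``thinnest'' fibers — where $\alpha_j(\mathbf t)$ is largest, so the $x_j$-interval of Euclidean length $e^{ra_j+u_j}$ has small metric length — still have total measure controlled, and that is exactly the point where one leans on the structure already exploited in the proof of Lemma~2.2.7 of \cite{P1}: the $\epsilon$-neighborhood in the $x_j$ direction, integrated against the volume form, contributes a term proportional to $\epsilon$ times the area of the ``cap'' at the top of the $\mathbf t$-range, and the extra factor $e^{u_j}$ there is absorbed into $e^{\|u\|}$. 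A clean way to organize this, and the route I would actually take, is to exhibit $B_u(r\Omega)$ as sandwiched between two honest boxes: $B(r\Omega) \subseteq B_u(r\Omega)$ fails in general since $u_j$ can be negative, but after translating we have $B_u(r\Omega) \subseteq B(r'\Omega)$ and $B(r''\Omega)\subseteq B_u(r\Omega)$ for $r',r''$ with $|r'-r''| = O(\|u\|/\min a_j)$, whence both the volume and the $\epsilon$-boundary of $B_u(r\Omega)$ are pinned between the corresponding quantities for $B(r'\Omega)$ and $B(r''\Omega)$; plugging (\ref{eqn:folner}) into this sandwich and bookkeeping the resulting ratios of volumes (which is where the $e^{\|u\|}$ enters) gives (\ref{eqn:ufolner}) with only elementary estimates.
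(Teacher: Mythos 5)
Your first route (steps (1)--(4)) is essentially the paper's proof: decompose $\partial_\epsilon B_u(r\Omega)$ into the contribution from the $x_j$-directions (with the $r\Omega$-factor held fixed) and the contribution from the $r\Omega$-directions, integrate each against the volume form $\prod_i e^{-\alpha_i(\mathbf t)}dx_i\wedge d\mathbf t = dx_1\wedge\cdots\wedge d\mathbf t$, observe that the second contribution divided by $|B_u(r\Omega)|$ equals its unmodified analogue exactly while the first is at most $e^{\|u\|}$ times its unmodified analogue, and conclude from Lemma 2.2.7 of \cite{P1}. Carried out precisely, your steps (2) and (3) recover exactly the paper's estimate.

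The sandwich route that you say you would actually take, however, has a genuine gap. From $B(r''\Omega)\subseteq B_u(r\Omega)\subseteq B(r'\Omega)$ you get volume monotonicity, but you do not get $|\partial_\epsilon B_u(r\Omega)| \le |\partial_\epsilon B(r'\Omega)|$ for free: when $A$ is compactly contained in $B$, the topological boundaries $\partial A$ and $\partial B$ are disjoint, hence $\partial_\epsilon A$ and $\partial_\epsilon B$ are disjoint for small $\epsilon$, and there is no set-theoretic containment to invoke. For convex sets in Euclidean space one recovers $|\partial_\epsilon A|\lesssim|\partial_\epsilon B|$ via the $1$-Lipschitz nearest-point projection, but the boxes $B_u(r\Omega)$ are only convex in coordinates, not geodesically convex in the warped left-invariant metric of $G_\phi$, so that argument does not transfer and you would need a separate lemma at exactly this point. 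This is precisely the step the paper's direct integration avoids: rather than comparing $B_u(r\Omega)$ to honest boxes, the authors compute its boundary volume outright and read off the factor $e^{\|u\|}$. There is also a secondary constant issue: even granting boundary monotonicity, the volume ratio $|B(r'\Omega)|/|B(r''\Omega)| = e^{(r'-r'')\sum_{j\le n}a_j}(r'/r'')^n$ produces a factor of the form $e^{C\|u\|}$ with $C=(\sum_{j\le n}a_j)/\min_j a_j$ rather than $e^{\|u\|}$, so the bound exactly as stated would not follow (this is harmless for the application, where $\|u\|$ is bounded in terms of $L$, but it means the lemma as written needs the sharper direct argument).
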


\begin{proof}
We follow the same proof as in \cite{P1} and bound the ratio of $\frac{ |\partial_\epsilon B_u(r\Omega)|}{|B_u(r\Omega)|}$ in terms of $\frac{ |\partial_\epsilon B(r\Omega)|}{|B(r\Omega)|}$. This will allow us to apply Lemma 2.2.7 of \cite{P1}. 

First note that the boundary $|\partial_\epsilon B_u(r\Omega)|$ can be decomposed as follows:\footnote{{Strictly speaking, there is an 
extra term arising from combination of points in $\partial_{\varepsilon} \left( \prod_j [0,e^{ra_j+ u_j}] \right)$ and $\partial_{\varepsilon} (r \Omega)$, 
but the corresponding volume is negligeable.}}
$$\left| \partial_{\varepsilon} \left( \prod_j [0,e^{ra_j+ u_j} ](r \Omega) \right) \right| 
= \underbrace{\left| \partial_{\varepsilon} \left( \prod_j [0,e^{ra_j+ u_j}] \right)(r \Omega)  \right|}_{(1)} 
+ \underbrace{ \left|  \left( \prod_j [0,e^{ra_j+ u_j}] \right)\partial_{\varepsilon} (r \Omega) \right|}_{(2)}.$$
When $u_i=0$ for all $i$, we are in the case of Lemma 2.2.7 of \cite{P1}, and we will refer to these terms as 
$(1)'$ and $(2)'$. Next we estimate each term separately:
\begin{enumerate}
\item[(2)]:  $\displaystyle \left|  \left( \prod_j [0,e^{ra_j+ u_j}] \partial_{\varepsilon} (r \Omega) \right) \right| 
=  \left( \prod_j e^{ra_j+ u_j}\right) r^{n-1}|\partial_{\varepsilon} \Omega|
= e^{u_1+ \cdots u_{n+1}}  \left|  \left( \prod_j [0,e^{ra_j}] \partial_{\varepsilon} (r \Omega) \right) \right|$
\item[(1)]:  
$\displaystyle \left| \partial \left( \prod_j [0,e^{ra_j+ u_j}] \right)(r \Omega)  \right|$
\bea
 &=& 
2 {{\varepsilon}} 
\sum_{j=1}^{n+1} \int_{\mathbf{t} \in r \Omega} \underbrace{\int_0^{e^{ra_1+u_1}}\hspace{-.2in}\cdots \int_0^{e^{ra_{n+1}+u_{n+1}} } e^{-\alpha_1(\mathbf{t})}dx_1 \ldots e^{-\alpha_{n+1}(\mathbf{t})}dx_{n+1}  }_{i\neq j}d{\mathbf{t}} \\
 &=& 
2 {{\varepsilon}} 
\sum_{j=1}^{n+1} \left( \prod_{i \neq j} e^{ra_i+u_i}\right)\int_{\mathbf{t} \in r \Omega} \underbrace{ e^{-\alpha_1(\mathbf{t})}dx_1 \ldots e^{-\alpha_{n+1}(\mathbf{t})}dx_{n+1} }_{i\neq j} d{\mathbf{t}} \\
 &=& 
2 {{\varepsilon}}
\sum_{j=1}^{n+1} \left( \prod_{i \neq j} e^{ra_i+u_i}\right)  \int_{\mathbf{t} \in r \Omega} e^{\alpha_j(\mathbf{t})} d{\mathbf{t}} \\
 &=& 
2 {{\varepsilon}} 
\sum_{j=1}^n \left( \prod_{i \neq j} e^{ra_i+u_i}\right)  {{r^{n-1}}}  \int_{0}^{r} e^{a_jt_j} dt_j + 
2 {{\varepsilon}} 
\left( \prod_{i=1}^n e^{ra_i+u_i}\right)  \int_{\mathbf{t} \in r\Omega} e^{-t_1a_1} e^{-t_2 a_2} \cdots e^{-t_n a_n} d\mathbf{t}\\
&=& 
2 {{\varepsilon}}  
\sum_{j=1}^n \left( \prod_{i \neq j} e^{ra_i+u_i}\right)  {{r^{n-1}}} a_j^{-1} (e^{ra_j} -1) +
2 {{\varepsilon}} \left( \prod_{i=1}^n e^{ra_i+u_i}\right) (-1)^na_1^{-1}\cdots a_n^{-1} (e^{-ra_1} -1) \cdots (e^{-ra_n}-1)\\
&=& 
2 {{\varepsilon}}  
\sum_{j=1}^n \left( \prod_{i \neq j} e^{ra_i+u_i}\right)  {{r^{n-1}}}  a_j^{-1} (e^{ra_j} -1) + 
2 {{\varepsilon}} \left( \prod_{i=1}^n -a_i^{-1}(e^{-ra_i} -1)e^{ra_i+u_i}\right ) \\
&=& 
2 {{\varepsilon}}  
e^{u_1+ \cdots + u_{n+1}}{{r^{n-1}}} \sum_{j=1}^n e^{-u_j}a_j^{-1} (e^{ra_j} -1)\left( \prod_{i \neq j} e^{ra_i}\right)  + 
2 {{\varepsilon}} e^{u_1 + \cdots + u_{n+1}}\left(e^{-u_{n+1}} \prod_{i=1}^n a_i^{-1}(e^{ra_i}-1)\right ).\\
\eea
\end{enumerate}
%\footnote{Some of the above need to be removed.}
Thus, in our case we have 
\bea 
\frac{|\partial_\epsilon B_u(r\Omega)|}{|B_u(r\Omega)|}  & =& \frac{(1)}{|B_u(r\Omega)|} + \frac{(2)}{|B_u(r\Omega)|} 
= \frac{(1)}{e^{u_1+ \cdots + u_{n+1}}|B (r\Omega)|} + \frac{(2)}{e^{u_1+ \cdots + u_{n+1}}|B(r\Omega)|}.
\eea
In the second term, the expression $e^{u_1+ \cdots u_{n+1}}$ cancel, thus giving  
$$ \frac{(2)}{|B_u(r\Omega)|} =  \frac{(2)'}{|B(r\Omega)|}.$$
%where $(2)'$ is the same as $(2)$ when $u_i=0$ for all $i$. 
In the first term, if for all $i$ we have $u_i\geq 0$, then  
$$ \frac{(1)}{|B_u(r\Omega)|} \leq \frac{(1)'}{|B(r\Omega)|}.$$ 
%where $(1)'$ is the term we get if $u_i=0$ for all $i$.
In the opposite case, if for all $i$ we have $u_i<0$, then 
%by multiplying each term of $(2)$ by $1=\frac{e^{u_j}}{e^{u_j}}$ we get that 
$$\frac{(1)}{|B_u(r\Omega)|} \leq e^{-(u_1 +\cdots +u_{n+1})} \frac{(1)'}{|B(r\Omega)|}.$$
In general,  
\bea 
\frac{|\partial_\epsilon B_u(r\Omega)|}{|B_u(r\Omega)|}  &  \leq & e^{|u_1| + \cdots + |u_{n+1}|}  \frac{|\partial_\epsilon B(r\Omega)|}{|B(r\Omega)|}   = O\left(\frac{\epsilon e^{\|u\|}}{\diam(B(r\Omega))}\right),
\eea
as announced.
\end{proof}

The following corollary states that for $n=2$ and $a_1=1, a_2=-1$, this covers the case of $SOL$. 

\begin{corollary}\label{sol:corollary} Let $G_\phi=\R^2 \rtimes \R$ be $SOL$. Then 
$$ \frac{|\partial_{\varepsilon} B_u(r\Omega)|}{|B_u(r\Omega)|} \leq2 \frac{(e^{-u_1}+e^{-u_2})}{r}+ \frac{1}{e^r}.$$
\end{corollary}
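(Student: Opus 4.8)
The plan is to specialize the general estimate from Lemma \ref{lem:border} to the case $G_\phi = SOL$, where $n=2$, $a_1 = 1$, $a_2 = -1$ (so that $\alpha_1(\mathbf{t}) = t_1$, $\alpha_2(\mathbf{t}) = -t_1$ in the notation matching the multiplication rule — or, rather, following the $t$-coordinate conventions, we have the single abelian direction with $a_1 = 1$), and then track the explicit constants through the two boundary terms $(1)$ and $(2)$ computed in the proof of Lemma \ref{lem:border} rather than invoking only the big-$O$ conclusion. In other words, I would not re-derive anything: I would plug $n=2$, $a_1=1$ (and the resulting $a_2 = -a_1 = -1$) directly into the final displayed formulas for $(1)/|B_u(r\Omega)|$ and $(2)/|B_u(r\Omega)|$ that appear just before the statement of the corollary.

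Concretely, first I would recall that $|B_u(r\Omega)| = e^{u_1 + u_2}|B(r\Omega)|$ and that $|B(r\Omega)| = e^{r a_1} r^n = e^r r$ when $n=1$ (or the appropriate power of $r$ for the chosen $\Omega$); then I would evaluate term $(2)$, the contribution of $\partial_\varepsilon(r\Omega)$, which after the cancellation of $e^{u_1+u_2}$ equals $(2)'/|B(r\Omega)|$ and contributes the $\frac{1}{e^r}$-type term. Next I would evaluate term $(1)$, the contribution of $\partial_\varepsilon$ of the interval product: from the last displayed line in the proof of Lemma \ref{lem:border}, with $n=2$ and $a_1 = 1$, the sum $\sum_{j=1}^n e^{-u_j} a_j^{-1}(e^{ra_j}-1)\prod_{i\neq j} e^{ra_i}$ collapses to a single term involving $e^{-u_1}$, and the final isolated term carries the $e^{-u_2}$ factor; dividing by $|B_u(r\Omega)| = e^{u_1+u_2}|B(r\Omega)|$ and bounding $e^{ra_i} - 1 \le e^{ra_i}$ yields the $2(e^{-u_1} + e^{-u_2})/r$ bound. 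Absorbing $\varepsilon$ and the $a_j^{-1}$ constants (here $a_1 = 1$) into the stated constant $2$ finishes the estimate. I would also note that $\diam(B(r\Omega)) \asymp r$ in this setting, which is why the denominators become $r$.

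The main obstacle — really the only place requiring care — is bookkeeping the constants: making sure the factor $2\varepsilon$, the $a_j^{-1}$, the $r^{n-1}$ versus $r^n$ powers, and the $(e^{ra_j}-1)$ versus $e^{ra_j}$ approximations all combine to give exactly the clean coefficients $2$ and $1$ claimed, rather than some unspecified constant. I would handle this by writing $\varepsilon \le$ (a small constant absorbed into the notation) or, more honestly, by observing that the corollary as stated presumably suppresses the $\varepsilon$-dependence into the $\le$ (or should read with an $O(\varepsilon)$); since the paper only ever uses this in the form "bounded by $\bar C r s$ with $\bar C$ depending on $R=\varepsilon$", I would simply carry $\varepsilon$ along and note that for $\varepsilon$ in a bounded range the displayed inequality holds with the constants shown. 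No new idea is needed beyond the already-completed computation in Lemma \ref{lem:border}; the corollary is a transcription with $n=2$, $a_1=1$, $a_2=-1$ substituted.
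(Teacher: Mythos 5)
Your approach --- specializing the explicit boundary computation from Lemma \ref{lem:border} to $SOL$, writing $|B_u(r\Omega)|=e^{u_1+u_2}re^r$, and reading off terms $(1)$ and $(2)$ to obtain the two contributions $2(e^{-u_1}+e^{-u_2})/r$ and $1/e^r$ --- is exactly what the paper does. Your remark that the stated inequality tacitly suppresses the $\varepsilon$-dependence is also correct: the paper only ever uses this bound in the form ``$O(1/r)$ with constant depending on $R$,'' so carrying $\varepsilon$ along is the honest bookkeeping, and your proof matches the paper's in substance.
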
 

\begin{proof} For $SOL$, a box $B(\Omega)$ is given by $\Omega = [0,1]$, 
so that $B(r \Omega)= [0, e^r] \times [0,1] \times [0,r]$, 
with volume $re^r$. Likewise, for $u := (u_1,u_2)$, we have
$$B_{u}(r\Omega)=[0, e^{r+u_1}] \times [0,e^{u_2}] \times [0,r],$$
with volume $|B_{u}(r\Omega)| = e^{u_1+u_2} r e^r$.  By investigating the calculations in our previous lemma, we see that 
$$|\partial B_u(r\Omega)|= 2e^{u_1+u_2}(e^{-u_1}(e^r-1)+e^{-u_2}(e^r-1)+ r)$$
So that 
\bea \frac{|\partial B_u(r\Omega)|}{|B_u(r\Omega)|} &=& \frac{2e^{u_1+u_2}(e^{-u_1}(e^r-1)+e^{-u_2}(e^r-1)+ r)}{e^{u_1+u_2} r e^r}\\
&=& \frac{2((e^{-u_1}+e^{-u_2})(e^r-1)+ r)}{ r e^r}\\
&\leq& 2\frac{(e^{-u_1}+e^{-u_2})}{r}+ \frac{1}{e^r}, 
 \eea
as announced.
\end{proof}

Notice that for $N=e^r$ (which fits with the set $S_N$ previously considered in $SOL$), this ratio estimate becomes 
$2\frac{(e^{-u_1}+e^{-u_2})}{\log{N}}+ \frac{1}{N} \leq 3\frac{(e^{-u_1}+e^{-u_2})}{\log{N}}$.

\subsection{Tiling} Our tiling is an extension of the tiling of $SOL$. As mentioned above, for our tiling to work we 
need to chose $a_i$ for which there exists $t\in \R$ such that $e^{a_i t} \in 2\Z$ for all $i$. If we define a standard 
F\o lner set by
$$S_t = \prod_{j=1}^{n+1}[0, e^{a_j t}] \times \prod_{i=1}^{n} [0,t],$$
then  
$S_{2t}$ can be tiled by $2^{n}e^{t(\sum_{i=1}^{\textcolor{black}n} a_i)}$ many translates of $S_t$. (We have changed notation slightly 
from the case of $SOL$ replacing $\log{N_0}$ by $t$ since we must account for the different weights $a_i$.) Now we define our 
basic tiles $Q_0,Q_1,Q_2,Q_3,Q_4$ as follows. We start as before by choosing $t=T$ large enough so that $S_{T} \cap \Gamma$  
contains at least $12$ points that are at least distance $C$ from the boundary of $S_{T}$. 
We pick $12$  of these points and call the result $Q_0$.  Then, for $j=1, 2, 3, 4$, we chose $Q_j\subset Q_0$ to have density $d_j$, 
where $d_j$ satisfies the conditions specified before in Section \ref{tiling:sec}. (For example, $d_1=1/3, d_2=1/2,d_3=1/4, d_4=7/12$.) 

Again, to define $Q^1_j \subset S_{2T}$ for $j=0, 1, 2, 3, 4$ from our basic tiles, we tile $S_{2T}$ by translates of the $Q_j$'s as follows.  First we tile $S_{2T}$ by translates of $S_T$.  Replacing each translate of $S_T$ with a translate of $Q_0$ (by the same element) defines the tile $Q_0^1$. For the other tiles we proceed as follows. Any translate that has as its first coordinate the interval $[ke^{a_1T}, (k+1)e^{a_1T}]$  with $k$ even is colored red.  If $k$ is odd this translate is colored blue. The rest of the translates are colored green.  

%Note that half of the translates are green, a quarter red, and a quarter blue. 
{\bf Claim.} Half of the tiles are in $S_{2T}$ are colored green, a quarter red and a quarter blue. 

\begin{proof} We have $S_{2T}=  (\prod_{j=1}^{n+1}[0, e^{a_j 2T}]) \times [0,2T]^n \subset \R^{n+1} \rtimes \R^n$ and the translates of $S_T$ that cover $S_{2T}$ correspond to sub-cubes of $[0,2T]^n$, where each interval in the sub-cube is either $[0,T]$ or $[T,2T]$.
The translates of $S_T$ with first coordinate $[ke^{a_1T}, (k+1)e^{a_1T}]$ correspond to those translates with sub-cube with first coordinate $[0,T]$, which is exactly half of all translates. Half of these have $k$ odd and half have $k$ even. 
\end{proof}

As before, to form $Q^1_1$ we replace the red translates with translates of $Q_1$, the blue translates with translates of $Q_2$, and the green translates with translates of $Q_3$. To form ${Q^1_2}$, we do the same except we replace the green translates with translates of 
$Q_4$. The sets $Q^1_3, Q^1_4$ are formed by replacing all of the colors with translates of $Q_3$ and $Q_4$ respectively. Then, as before, $|Q^1_j|=d_j|Q^1_0|$ for all $j=1, 2, 3, 4$.  Iterating this procedure allows us to define $Q^m_j$ for any $m$. Finally, moving points a distance $\leq C$ apart if necessary, this defines for us a Delone set $\D\subset \Gamma$. (As before, whichever parts of $G_\phi$ we have not colored we can just replace with their intersection with $\Gamma$). 

Next we need an analogue of Lemma \ref{getM0}. In the statement of this lemma, we define for any $t=2^mT$ the set 
$\bar{S}_t:=Q^m_0$. Then $|\bar{S}_t|= C'|S_t|$  for some constant $C'$ depending on $|Q_0|$.

%There exist $M_0 > 0$ and $\varepsilon_0 > 0$, both depending on $K,C$, satisfying the following: 
%Suppose $S_1,S_2$ are standard F\o lner sets contained in $S_M$, where $M=N^{2^m}$ and 
%$|\mathcal{D}\cap S_1|=d_1|\bar{S}_m|$ while $ |\mathcal{D}\cap S_2|=d_2|\bar{S}_{m}|$ 
%for some $d_1 < d_2$.  (Recall $|\bar{S}_m|= M\log{M}\frac{|\bar{S}|}{N}$). 
%%Suppose $f: \D \to \Gamma$ is a $K$-biLipschitz map that is distance $A$ from the standard map $F$ and 
%Suppose also that $f: \mathcal{D} \to \Gamma$ is $K$-biLipschitz, and that the companion $F$ satisfies 
%$$F(S_1)= U_{rM}\times U_s \times [0, \log{M}]$$ and 
%$$F(S_2)=U_{r'M}\times U_s \times [0, \log{M}].$$ 
%Then either $M < M_0$ or $|r-r'| > \epsilon_0$. 

\begin{lemma}\label{1.2new} There exist $t_0 > 0$ and $\epsilon_0 > 0$, both depending on $K,C$, satisfying the following: 
Suppose that  $t > t_0$ and that $S^1,S^2$ are translates of the standard F\o lner set $S_t$  where $t={2^m}T$ for some 
$m$, and $|\mathcal{D}\cap S^i|=d_i|\bar{S}_t|$. 
%Suppose $f: \D \to \Gamma$ is a $K$-biLipschitz map that is distance $A$ from the standard map $F$ and 
Suppose also that $f: \mathcal{D} \to \Gamma$ is $K$-biLipschitz, and that the companion quasi-isometry $F$ satisfies, 
for $i=1,2$ and up to postcomposition with isometries, 
$$F(S^i)= B_{u^i}(t\Omega).$$
%where $|u^1-u^2|<\epsilon$. 
Then $\|u^1- u^2\| > \epsilon_0$. %Then there exist $t_0,\epsilon_0>0$ such if $t>t_0$ and $0<\epsilon< \epsilon_0$ then $f: \mathcal{D} \to \Gamma$ cannot be $K$-biLipschitz.
%NEED SOME BETTER NOTATION HERE NOT $B_u(\Omega)$.
%This is impossible for $M$ large enough and $\epsilon$ small enough. 
\end{lemma}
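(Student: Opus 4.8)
The plan is to mimic, almost verbatim, the proof of Lemma \ref{getM0} (the $SOL$ case), with the box volumes and Følner ratios supplied by Lemma \ref{lem:border} in place of the explicit $SOL$ computations. First I would invoke Lemma \ref{first:lemma} with $G = G_\phi$ and the Følner sets $S^1, S^2$: since $f$ and $F$ are bounded distance $A$ apart on $\D$, we get for $i=1,2$
$$|F(S^i)| - |\partial_{A+D} F(S^i)| \ \leq\ |f(\D \cap S^i)| \ \leq\ |F(S^i)| + |\partial_{A+D} F(S^i)|.$$
Since $|f(\D\cap S^i)| = |\D\cap S^i| = d_i |\bar S_t|$ (as $f$ is a bijection onto its image inside $\Gamma$), this reads $|F(S^i)|(1 - \eta_i) \leq d_i|\bar S_t| \leq |F(S^i)|(1+\eta_i)$ where $\eta_i := |\partial_{A+D}F(S^i)|/|F(S^i)|$.

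Next I would compute the relevant volumes. By hypothesis $F(S^i) = B_{u^i}(t\Omega)$, so $|F(S^i)| = e^{\|u^i\|_\Sigma}|B(t\Omega)|$ where I write $\|u\|_\Sigma := u_1 + \cdots + u_{n+1}$ (note: this is the signed sum that appears in the volume formula, not $\|u\|$); and $|\bar S_t| = C'|S_t|$ with $|S_t|$ a fixed multiple of $|B(t\Omega)|$ up to the usual bounded discrepancy coming from the tiling normalization. Hence $|F(S^i)|/|\bar S_t| = c\, e^{\|u^i\|_\Sigma}$ for an explicit constant $c = c(T, |Q_0|, n)$, and the inequality above becomes $c\,e^{\|u^i\|_\Sigma}(1-\eta_i) \leq d_i \leq c\,e^{\|u^i\|_\Sigma}(1+\eta_i)$. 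The key point is that since $F$ is the companion to an $L$-biLipschitz-factored quasi-isometry, each coordinate stretch is controlled: $e^{-ta_j(L-1)} \leq e^{u^i_j} \leq e^{ta_j(L-1)}$ — more simply, $\|u^i\|$ is bounded in terms of $L$ and $t$, but what we actually need is only that $\|u^i\|$ stays in a fixed compact range determined by $L$ (after the identity-fixing normalization the relevant $u^i_j$ are bounded by a constant depending only on $K, C$), so $e^{\|u^i\|}$ is bounded by a constant $L' = L'(K,C)$. Then, since $B_{u^i}(t\Omega)$ is a Følner set (Lemma \ref{lem:border} gives $\eta_i = O(\epsilon e^{\|u^i\|}/\diam(B(t\Omega))) = O(L'/t)$), for any $\delta > 0$ there is $t_0$ so that $t > t_0$ forces $\eta_i < \delta$ for $i=1,2$.

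Finally, I would extract the separation. From $c\,e^{\|u^1\|_\Sigma}(1-\delta) \leq d_1$ and $d_2 \leq c\,e^{\|u^2\|_\Sigma}(1+\delta)$, together with the opposite-direction inequalities, subtracting and using $|d_1 - d_2| > 0$ fixed while $\delta$ is chosen small (precisely, $\delta$ small enough that $2c\delta \cdot (\text{bound on }e^{\|u^i\|_\Sigma}) < |d_1-d_2|/2$), we conclude $|e^{\|u^1\|_\Sigma} - e^{\|u^2\|_\Sigma}|$ is bounded below by a positive constant. Since $e^x$ is bi-Lipschitz on the bounded range in which $\|u^i\|_\Sigma$ lives, this gives $|\,\|u^1\|_\Sigma - \|u^2\|_\Sigma\,| > \epsilon_0'$, and a fortiori $\|u^1 - u^2\| \geq |\,\|u^1\|_\Sigma - \|u^2\|_\Sigma\,| > \epsilon_0$ for a suitable $\epsilon_0 = \epsilon_0(K,C)$, with $t_0$ depending on the chosen $\delta$ and hence on $K, C$. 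I expect the main obstacle to be bookkeeping rather than conceptual: namely pinning down the precise constant $c$ relating $|\bar S_t|$, $|S_t|$, and $|B(t\Omega)|$ through the tiling construction, and making sure the "bounded range for $\|u^i\|$" claim is justified from the identity-fixing normalization of $F$ (this is where one uses $L = L(K,C)$ and $A = A(K,C)$ from Theorem \ref{pengorigthm}) rather than merely from $F$ being some biLipschitz-factored map. Once those are in hand, the estimate is a line-by-line transcription of the $SOL$ argument with the scalar $rs\,N_0/(n\log N_0)$ replaced by $c\,e^{\|u\|_\Sigma}$.
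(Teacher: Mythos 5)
Your proposal is correct and mirrors the paper's proof of Lemma \ref{1.2new} essentially line for line: invoke Lemma \ref{first:lemma}, use $|B_{u^i}(t\Omega)| = e^{u^i_1+\cdots+u^i_{n+1}}|S_t|$ together with the F\o lner ratio from Lemma \ref{lem:border} to bound $d_i$ by $e^{\sum_j u^i_j}(1\pm\delta)$ up to the constant $C'$, then subtract and use $e^{u^i_j}\leq L$ to force a separation. The only cosmetic difference is that the paper factors $e^{\sum u^1_j}-e^{\sum u^2_j}=e^{\sum u^2_j}(e^{\sum(u^1_j-u^2_j)}-1)$ and bounds this directly by $L^{n+1}(e^{\|u^1-u^2\|}-1)$, whereas you first extract a lower bound on $|\sum_j u^1_j-\sum_j u^2_j|$ and then appeal to the triangle inequality $\|u^1-u^2\|\geq|\sum_j(u^1_j-u^2_j)|$; both routes yield the same $\epsilon_0$ and $t_0$ depending only on $K,C$.
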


\begin{proof} 
Recall that  $|S_t|= (\prod_{j=1}^{n+1} e^{a_j t})t^{n}$.
By  Lemma \ref{first:lemma}, we have that 
$$ |F(S^i)| - |\partial_{A+D} F(S^i)|  \leq |f(\mathcal{D} \cap S^i)| \leq |F(S^i)| + |\partial_{A+D} F(S^i)|.$$
Hence, 
$$ |B_{u^i}(t\Omega)| - |\partial_{A+D} B_{u^i}(t\Omega)| \leq C' d_i |S_t| \leq |B_{u^i}(t\Omega)| + |\partial_{A+D} B_{u^i}(t\Omega)|,$$
and so
$$ \frac{|B_{u^i}(t\Omega)|}{|S_t|} -  \frac{|B_{u^i}(t\Omega)|}{|S_t|}\frac{ |\partial_{A+D} B_{u^i}(t\Omega)|}{|B_{u^i}(t\Omega)|} 
\leq 
C' d_i 
\leq 
\frac{|B_{u^i}(t\Omega)|}{|S_t|} +\frac{|B_{u^i}(t\Omega)|}{|S_t|}\frac{ |\partial_{A+D} B_{u^i}(t\Omega)|}{|B_{u^i}(t\Omega)|} $$
that is, 
$$ \frac{|B_{u^i}(t\Omega)|}{|S_t|} \left(1 - \frac{ |\partial_{A+D} B_{u^i}(t\Omega)|}{|B_{u^i}(t\Omega)|} \right)
 \leq 
C' d_i 
\leq 
\frac{|B_{u^i}(t\Omega)|}{|S_t|} \left(1 + \frac{ |\partial_{A+D} B_{u^i}(t\Omega)|}{|B_{u^i}(t\Omega)|} \right).$$
By Equation (\ref{eqn:ufolner}), for $t>t_0$ large enough we have that for both $i=1,2$,
$$\frac{ |\partial_{A+D} B_{u^i}(t\Omega)| }{|B_{u^i}(t\Omega)|} < \delta.$$
Thus,
$$ \frac{|B_{u^i}(t\Omega)|}{|S_t|} \left(1 - \delta \right) \leq d_i 
\leq \frac{|B_{u^i}(t\Omega)|}{|S_t|} \left(1 +\delta \right),$$
and therefore
\bea
C' | d_1 - d_2 | 
& \leq  &  \frac{|B_{u^1}(t\Omega)|}{|S_t|}  (1+ \delta) -  \frac{|B_{u^2}(t\Omega)|}{|S_t|} (1 -\delta)\\
&\leq & \left| \frac{|B_{u^1}(t\Omega)| - |B_{u^2}(t\Omega)|}{|S_t|} \right| + \delta\left| \frac{|B_{u^1}(t\Omega)| + |B_{u^2}(t\Omega)|}{|S_t|} \right|.
\eea 

Recall also that 
$$|B_{u^i}(t\Omega)|=\left(\prod_{j=1}^{n+1} e^{ta_j+u^i_j}\right)t^{n}=e^{\sum_{j=1}^{n+1} u^i_j }\left(\prod_{j=1}^{n+1} e^{ta_j}\right)t^{n}=e^{\sum_{j=1}^{n+1} u^i_j }|S_t|,$$ 
hence 
\begin{eqnarray*}
\left| \frac{|B_{u^1}(t\Omega)| \pm |B_{u^2}(t\Omega)|}{|S_t|} \right|&\leq& \left|{e^{\sum_{j=1}^{n+1} u^1_j }} \pm {e^{\sum_{j=1}^{n+1} u^2_j }} \right|  \\
%\frac{|(\prod_{j=1}^{n+1} e^{a_j+u^1_j})t^{n} - (\prod_{j=1}^{n+1} e^{a_j+u^2_j})t^{n}|}{|(\prod_{j=1}^{n+1} e^{t a_j})t^{n}|}\\
%&\leq& 
%\frac{|\prod_{j=1}^{n+1} e^{a_j+u^1_j} - \prod_{j=1}^{n+1} e^{a_j+u^1_j + \epsilon}|}{|\prod_{j=1}^{n+1} e^{t a_j}|}\\
%&\leq& n \epsilon^n 
& \leq & {e^{\sum_{j=1}^{n+1} u^i_j }}\left|e^{\sum_{j=1}^{n+1} (u^1_j - u^2_j )} \pm1\right|\\
& \leq & {e^{\sum_{j=1}^{n+1} u^i_j }}\left|e^{\|u^1 - u^2\| } \pm 1\right|\\
%& \leq & {e^{\sum_{j=1}^{n+1} u^i_j }}(e^{\epsilon } -1)\\
& \leq & L^{n+1}\left|e^{\epsilon } \pm 1\right|,
\end{eqnarray*}
where $\varepsilon := \| u^1 - u^2 \|$. The last step follows from the fact that all boundary maps $f_j$ 
are $L$-biLipschitz, and so $e^{u_j^i} \leq L$  for all $j$. 

Summarizing, we have 
$$ C' |d_1-d_2| \leq L^{n+1}|e^{\epsilon} -1|+\delta L^{n+1} |e^{\epsilon} +1|.$$
In this inequality, $d_1,d_2, C'$ and $L$ are given, and $\delta$ can be made arbitrarily small by choosing $t > t_0$ for a 
large enough $t_0$. Fixing such a $\delta$ very small, we conclude that $\epsilon > \epsilon_0$ for some appropriately 
chosen $t_0,\epsilon_0$.
\end{proof}

\subsection{No $K$-bilipschitz maps}
The following is a modification of Lemma \ref{getM02}. 

\begin{lemma}\label{gett0}  
%Given $d_1>d_2$, 
There exist $\lambda_0>0$ and $M_0>0$ 
such that if 
\begin{equation}\label{eqnone-new}
\frac{|f_1((i-1)M) - f_1(iM)|}{M} 
\leq 
(1+\lambda)     \frac{|f_1(0) - f_1(M^2)|}{M^2}
\end{equation}
holds for $i=1, \ldots, M$, where $\lambda < \lambda_0$ and $M := e^{a_1t}$ (with $t=T2^m$)  
satisfies $M \geq M_0$, then $f:\D \to \Gamma$ cannot be $K$-biLipschitz. 
\end{lemma}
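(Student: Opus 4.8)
The plan is to mimic the proof of Lemma~\ref{getM02} almost verbatim, replacing the one-dimensional size parameter $M$ of the $SOL$ case by the box parameter $t$ of the higher-rank setting, and replacing the ``isometric to $U_{rM}\times U_s\times[0,\log M)$'' description of $F(S^i)$ by the ``$u$-modified box'' description $F(S^i)=B_{u^i}(t\Omega)$. The input ingredients are: (i) an analogue of Lemma~\ref{stretchlemma} (the pigeonhole step), which holds verbatim with $f_\ell$ replaced by $f_1$ and $M$ the number of consecutive intervals; (ii) the key Lemma~\ref{1.2new}, which provides $t_0,\epsilon_0$ such that two translates $S^1,S^2$ of $S_t$ with $|\D\cap S^i|=d_i|\bar S_t|$ must have $\|u^1-u^2\|>\epsilon_0$; and (iii) the relation between the ``stretch ratios'' of $f_1$ on the intervals $[(i-1)M,iM)$ and the first component $u^i_1$ of the vector $u^i$.

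First I would set up the F\o lner sets. Fix $t=T2^m$ and $M:=e^{a_1t}$. The standard F\o lner set $S_t=\prod_{j=1}^{n+1}[0,e^{a_jt}]\times[0,t]^n$ has first coordinate $[0,M)$. For $i=1,\dots,M$ let
$$S^i:=[(i-1)M,iM)\times\Big(\prod_{j=2}^{n+1}[0,e^{a_jt}]\Big)\times[0,t]^n,$$
a translate of $S_t$; these are exactly the ``red/blue'' type slabs of the tiling, so by construction of $\D$ in Section~\ref{hr:sec} each $S^i$ satisfies either $|\D\cap S^i|=d_1|\bar S_t|$ or $|\D\cap S^i|=d_2|\bar S_t|$, and adjacent ones differ (this is the role of the even/odd $k$ coloring: consecutive slabs carry $Q_1$ vs.\ $Q_2$). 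Since $t=T2^m$, all $S^i$ sit inside $S_{2t'}$ for the appropriate larger $t'$, hence inside the region where $\D$ is defined. Apply the pigeonhole lemma (analogue of Lemma~\ref{stretchlemma}) to the hypothesis (\ref{eqnone-new}): there is a $k$ with both
$$\frac{|f_1((k-1)M)-f_1(kM)|}{M}\ \ge\ (1-\lambda)\frac{|f_1(0)-f_1(M^2)|}{M^2}\quad\text{and}\quad \frac{|f_1(kM)-f_1((k+1)M)|}{M}\ \ge\ (1-\lambda)\frac{|f_1(0)-f_1(M^2)|}{M^2}.$$

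Next I would translate the stretch inequalities into a bound on $|u^1-u^2|$ where $u^1,u^2$ are the modification vectors for $F(S^k)$ and $F(S^{k+1})$. Since $F$ acts coordinatewise as $f_1\times\cdots\times f_{n+1}\times\mathrm{id}$ and the first coordinate of $S^i$ is an interval of length $M$, the first coordinate of $F(S^i)$ is (up to the bounded-distance error absorbed into the $u$'s) an interval of length $|f_1((i-1)M)-f_1(iM)|$; matching with $B_{u^i}(t\Omega)$, whose first side has length $e^{ta_1+u^i_1}=Me^{u^i_1}$, gives $e^{u^i_1}\approx |f_1((i-1)M)-f_1(iM)|/M$. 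The other coordinates $j\ge 2$ of $S^k$ and $S^{k+1}$ are identical, so $u^k_j=u^{k+1}_j$ for $j\ge 2$ (again up to a fixed bounded error), whence $\|u^k-u^{k+1}\|=|u^k_1-u^{k+1}_1|$. Combining (\ref{eqnone-new}) (an upper bound of $(1+\lambda)$ times the average stretch) with the two lower bounds of $(1-\lambda)$ times the same average, and using that $f_1$ is $L$-biLipschitz so each $e^{u^i_1}\in[L^{-1},L]$, one gets $|u^k_1-u^{k+1}_1|=O(\lambda)$ — concretely something like $\|u^k-u^{k+1}\|\le \log\frac{1+\lambda}{1-\lambda}\le 3\lambda$ for $\lambda$ small. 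Now invoke Lemma~\ref{1.2new}: choosing $\lambda_0:=\epsilon_0/3$ (and $M_0$ so that $M\ge M_0$ forces $t\ge t_0$) we get a contradiction, since Lemma~\ref{1.2new} forces $\|u^k-u^{k+1}\|>\epsilon_0$ while the stretch estimates force $\|u^k-u^{k+1}\|\le 3\lambda\le \epsilon_0$.

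The main obstacle — and the one point that deserves care rather than copy-paste — is step three: making rigorous the identification ``first side of $F(S^i)$ $\leftrightarrow$ $f_1$-image of the first coordinate interval'' and ``higher sides unchanged between $S^k$ and $S^{k+1}$'', including controlling how the bounded-distance constant $A$ and the passage from $\D$ to $\Gamma$ (diameter $D$ of $G/\Gamma$) perturb the $u^i_j$. One must check these perturbations are additive constants independent of $t$, so that for $t$ large they are swallowed into the $\delta$ already allowed in Lemma~\ref{1.2new} and into the $O(\lambda)$ slack; this is exactly the kind of estimate Lemma~\ref{lem:border} and Lemma~\ref{first:lemma} were set up to provide. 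Everything else — the pigeonhole argument and the final choice of $\lambda_0,M_0$ — is identical to the $SOL$ case.
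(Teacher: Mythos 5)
Your proof matches the paper's argument in all essential respects: same F\o lner slabs $S^i$, same pigeonhole step from the analogue of Lemma~\ref{stretchlemma}, same identification of $F(S^k),F(S^{k+1})$ with $u$-modified boxes differing only in the first coordinate, and the same contradiction via Lemma~\ref{1.2new}. The only cosmetic difference is that you bound $|u^1_1-u^2_1|\le\log\frac{1+\lambda}{1-\lambda}=O(\lambda)$ directly, whereas the paper bounds $|e^{u^1_1}-e^{u^2_1}|\le 2\lambda L$ and then compares with $e^{\epsilon_0}-1$; both are equivalent and yield the same choice of $\lambda_0$ up to constants.
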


\begin{proof}
The choice of $M$ defines F\o lner sets for $i=1, \ldots, M$:
$$S^i=[(i-1)M, iM] \times \left( \prod_{j=2}^{n+1} [0,e^{ta_j}] \right) \times [0,t]^n.$$
These are all subsets of $S_{T2^{m+1}}$. As in Lemma \ref{stretchlemma}, there exists 
$k$ such that  both inequalities below hold:
\begin{eqnarray}
\label{thearray1-new}
|f_1((k-1)M) - f_1(kM)| &\geq& (1- \lambda)     \frac{|f_1(0) - f_1(M^2)|}{M},\\
\label{thearray2-new}
|f_1(kM) - f_1((k+1)M)| &\geq& (1- \lambda)     \frac{|f_1(0) - f_1(M^2)|}{M}.
\end{eqnarray}

Now either  $|\D \cap S^k|= |Q_1^m|=d_1|\bar{S}_{t}|$ and $|\D \cap S^{k+1}|= |Q_2^m|=d_2|\bar{S}_{t}|$, 
or vice versa. We also have that for some $u^1,u^2$ {{satisfying $u^1_j = u^2_j$ for $j=2,\ldots,n+1$}}, 
the set  $F(S^k)$ is a translate of $B_{u^1}(t\Omega)$, and $F(S^{k+1})$ is a translate of $B_{u^2}(t\Omega)$. 
%$$F(S_k)= U_{rM}\times U_s \times [0, \log{M}]$$ and $$F(S_{k+1})=U_{r'M}\times U_s \times [0, \log{M}].$$ 
%where $|r-r'|<\epsilon$. 
By equations (\ref{eqnone-new}), (\ref{thearray1-new}) and (\ref{thearray2-new}),  
\bea
|e^{u^1_1}-e^{u^2-1}| &\leq& (1+ \lambda )\frac{|f_1(0) - f_1(M^2)|}{M^2} -(1-\lambda)\frac{|f_1(0) - f_1(M^2)|}{M^2} \leq 2\lambda L.
\eea
Let $t_0$ and $\epsilon_0$ be as in Lemma \ref{1.2new}, set $M_0=e^{t_0a_1}$, and assume $e^{ta_1} = M \geq M_0$. 
{{Then we have
$$e^{\varepsilon_0}-1 \leq | e^{u^1_1} - e^{u^2_1} | \leq 2 \lambda L.$$
However, if  $\lambda \leq \lambda_0$ for a very small $\lambda_0$, this 
yields a contradiction. }} 
% $f: \D \to \Gamma$ cannot be $K$-biLipschitz. 
\end{proof}

\begin{theorem}\label{hr:thm} If $\Gamma \subset G_\phi= \R^{n+1} \rtimes_\phi \R^n$ is a lattice (where $G_\phi$ is a boundary 
one-dimensional even scaling abelian-by-abelian solvable Lie group as defined above), then there exist non-rectifiable Delone sets in $\Gamma$.
 \end{theorem}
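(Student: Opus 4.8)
The plan is to run exactly the same argument as for Theorem~\ref{sol:thm}, using the higher-rank machinery assembled in this section in place of the $SOL$-specific lemmas. The setup is already in place: the Delone set $\D \subset \Gamma$ has been constructed via the tiling of $S_{2T}$ with the basic tiles $Q_0,\dots,Q_4$ of densities $d_1,\dots,d_4$ satisfying $d_3 = (3d_1-d_2)/2$, $d_4 = (3d_2-d_1)/2$; the companion quasi-isometry $F = (f_1\times\cdots\times f_{n+1})\times\mathrm{id}$ to a $K$-biLipschitz map $f:\D\to\Gamma$ is provided by Theorem~\ref{pengorigthm} with each $f_j$ being $L$-biLipschitz on $V_{\alpha_j}$ for $L=L(K,C)$; Lemma~\ref{1.2new} plays the role of the Key Lemma~\ref{getM0}; and Lemma~\ref{gett0} is the higher-rank analogue of Lemma~\ref{getM02}.

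First I would suppose, for contradiction, that $f:\D\to\Gamma$ is $K$-biLipschitz, with companion $F$ as above, and extract the first boundary map $f_1$, which must be $L$-biLipschitz on $V_{\alpha_1}\cong\R$. By Lemma~\ref{gett0}, there exist $\lambda_0>0$ and $M_0>0$ such that whenever $M = e^{a_1 t} \geq M_0$ (with $t = T2^m$) and $\lambda \leq \lambda_0$, the hypothesis
$$\frac{|f_1((i-1)M) - f_1(iM)|}{M} \leq (1+\lambda)\frac{|f_1(0)-f_1(M^2)|}{M^2}$$
cannot hold for all $i=1,\dots,M$; hence for every such $M$ and $\lambda$ there is at least one index $i\in\{1,\dots,M\}$ for which the reverse strict inequality holds. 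Note that the admissible values $M = e^{a_1 T 2^m}$ form an infinite sequence tending to infinity, and $M^2 = e^{a_1 T 2^{m+1}}$ is again of this form, so the self-similar scaling structure needed in the next step is available.

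Next I would invoke Lemma~\ref{last step} verbatim: it is a purely one-dimensional statement about an $L$-Lipschitz map $f_1:\R\to\R$, and its proof (choosing $t$ with $(1+\lambda)^t/L > L$, iterating the squaring $M_j = M_{j-1}^2$, and telescoping the stretch inequalities) goes through unchanged provided the $M_j$ can be chosen admissible — which they can, since squaring preserves membership in the sequence $\{e^{a_1T2^m}\}$. Lemma~\ref{last step} then concludes that $f_1$ is not $L$-Lipschitz for any $L$, contradicting the fact that it is $L$-biLipschitz with $L = L(K,C)$. Since $K$ was arbitrary, no biLipschitz map $\D\to\Gamma$ exists, so $\D$ is non-rectifiable, proving the theorem.

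The step I expect to carry the real content is Lemma~\ref{1.2new}, which has already been established: it is there that the even-scaling and boundary one-dimensional hypotheses are used (even-scaling to make the tiling of $S_{2T}$ by translates of $S_T$ integral, and boundary one-dimensionality so that the box $F(S^i) = B_{u^i}(t\Omega)$ is genuinely determined by the scalars $u^i_j$ and the volume ratio $|B_{u^i}(t\Omega)|/|S_t| = e^{\sum_j u^i_j}$ is controlled by $L^{n+1}$ via Lemma~\ref{lem:border}). Everything after that — Lemmas~\ref{gett0} and~\ref{last step} and the final contradiction — is a transcription of the $SOL$ argument with $\log M$ replaced by $t$ and careful bookkeeping of the weights $a_i$; the only mild subtlety is checking that in Lemma~\ref{gett0} the two F\o lner sets $S^k, S^{k+1}$ differ only in their first $V_{\alpha_1}$-coordinate, so that $u^1_j = u^2_j$ for $j=2,\dots,n+1$ and hence $\|u^1-u^2\| = |u^1_1 - u^2_1|$, which is exactly the quantity bounded by $2\lambda L$ and bounded below by $e^{\varepsilon_0}-1$ from Lemma~\ref{1.2new}.
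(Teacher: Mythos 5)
Your proof is correct and follows essentially the same line as the paper's: extract the first boundary map $f_1$ from the companion quasi-isometry of Theorem~\ref{pengorigthm}, apply Lemma~\ref{gett0} to produce an index where the stretch inequality fails, then run Lemma~\ref{last step} to contradict $L$-biLipschitzness of $f_1$. Your added checks (that $M^2 = e^{a_1T2^{m+1}}$ remains admissible under the iterated squaring, and that $u^1_j = u^2_j$ for $j \geq 2$ so that $\|u^1-u^2\| = |u^1_1-u^2_1|$) are correct and make the argument slightly more explicit than the paper's version, but are not a different approach.
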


\begin{proof} 
Let $\mathcal{D}$ as indicated above and suppose that $f:\mathcal{D} \to \Gamma$ is a $K$-biLipschitz map. Let $F=(f_1, \ldots, f_{n+1}, id)$ be the companion quasi-isometry to $f$. Then $f_1$ must be $L$-biLipschitz for some $L$. Now by Lemma \ref{gett0} we must have that there exist $M_0$ and $\lambda_0$ such that if $M\geq M_0$ and $\lambda \leq \lambda_0$, then for some $i\in\{1, \ldots, M\}$,  
$$ \frac{|f_1((i-1)M) - f_1(iM)|}{M} > (1+\lambda) \frac{|f_1(0) - f_1(M^2)|}{M^2}.$$
But then proceeding as in Lemma \ref{last step}, we see that $f_1$ cannot be $L$-biLipschitz for any $L$, which is a contradiction. 
\end{proof}

\section{Solvable Baumslag-Solitar groups}\label{bs:sec}

In this section, we show how the methods of the first two sections can also be used to produce non-rectifiable 
Delone sets in the solvable Baumslag-Solitar groups 
$$BS(1,m)= \left< t, a \mid tat^{-1}=a^m\right>.$$
%As before the key to being able to construct non-rectifiable Delone sets in $BS(1,m)$ is to view $BS(1,m)$ as a uniform lattice in a locally compact group for which there are known quasi-isometric rigidity results. Indeed 
The Baumslag-Solitar groups are not cocompact lattices in any real Lie group. 
However, they can be viewed as cocompact lattices in the locally compact isometry group of a fibered product $X_m$ given by the following diagram:
\[
\begin{tikzcd}
X_m \arrow{dr}{\bar{h}} \arrow{r}{\rho_1} \arrow[swap]{d}{\rho_2} & T_{m+1} \arrow{d}{h} \\
\mathbb{H}_m^2 \arrow{r}{b}&\R
\end{tikzcd}
\]
Here, $\mathbb{H}_m^2$ is a hyperbolic plane  with curvature $-\frac{1}{\ln(m)}$ having 
$b:\mathbb{H}_m^2 \to \R$ as a Busemann function, and $T_{m+1}$ is an $(m+1)$-valent simplicial tree, with
$h:T_{m+1} \to \R$ a height function defined by fixing an orientation on edges such that each vertex has one 
incoming edge and $m$ outgoing edges.
 % and then sending some base point vertex to zero and all coherently oriented lines isometrically to $\R$. 
  We  also refer to $b:\mathbb{H}^2_m\to \R$ and the induced map $\bar{h}:X_m \to \R$ as height functions on $\mathbb{H}^2_m$ and  $X_m$ respectively. 
Topologically, the space $X_m$ can be identified with $T_{m+1} \times \R$, but metrically it is isometric to a family of hyperbolic planes glued together along horoball complements at integer heights.  For a more detailed description of $X_m$ see \cite{FM1, FM2}. 
By fixing a base point $x_0$ with $\bar{h}(x_0)=0$ and considering the orbit of $x_0$ under $BS(1,m)$, we can embed the Cayley graph of $BS(1,m)$ into $X_m$. 

{\bf Remark.} The group $SOL$ has a similar description to the above with $\mathbb{H}^2_m$ and $T_{m+1}$ replaced by two hyperbolic planes $\mathbb{H}^2$ and $h$ replaced by $-b$, the negative of a Busemann function $b: \mathbb{H}^2 \to \R$. 

%The following lemma tells us that the space $X_m$ can be described by the coordinates  $\{ (x,y, t) \in \R \times \Q_m \times \R\}$.

%NEED TO MAKE CLEAR THE DIFFERENCE BETWEEN $|S|$ and $|\bar{\pi}(BS(1,m))\cap S|$
%CLEAN UP STATEMENT OF THIS LEMMA
\begin{lemma}  
There exists a metric on 
$$\{ (x,y, t) \in \R \times \Q_m \times \R\}$$
and two  $(K,C)$ quasi-isometries with $K=C=1$ that are coarse inverses of each other 
$$\pi:  \R \times \Q_m \times \R \to X_m, \quad  \bar{\pi}: X_m\to  \R \times \Q_m \times \R,$$  
such that $\bar{\pi}$ is injective on $BS(1,m)\subset X_m$.
\end{lemma}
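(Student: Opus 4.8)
The plan is to make explicit the standard model of $X_m$ as a "treebolic space" (in the sense of Farb-Mosher and Bass-Serre theory) and to realize the coordinates $(x,y,t)$ by combining a horocyclic coordinate on $\mathbb H^2_m$ with a $\Q_m$-valued coordinate recording the branching of the tree $T_{m+1}$. Recall that $X_m$ is the set of points $(\xi, v)$ with $\xi \in \mathbb H^2_m$, $v \in T_{m+1}$ and $b(\xi) = h(v)$. The hyperbolic plane $\mathbb H^2_m$ in upper-half-plane coordinates $(x,t)$ (with the metric scaled so the curvature is $-1/\ln m$) has Busemann function $b(x,t) = t$ about the point at infinity, and horocycles $\{t = \mathrm{const}\}$ are isometric copies of $\R$ with the $x$-coordinate. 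On the tree side, $h^{-1}(t)$ for $t$ an integer is the vertex set at that height, and the set of ends of $T_{m+1}$ lying below a fixed end is identified with $\Q_m$ (the $m$-adic rationals, i.e.\ $\Z[1/m]$ completed appropriately) via the usual $m$-ary expansion; a point of $T_{m+1}$ at height $t$ corresponds to an $m$-adic interval, and choosing a section gives a well-defined $y \in \Q_m$ up to the diameter of that interval. So I would \emph{define} $\pi(x,y,t)$ to be the point of $X_m$ whose $\mathbb H^2_m$-coordinate is $(x,t)$ and whose $T_{m+1}$-coordinate is the vertex (or point) at height $\lfloor t \rfloor$ determined by the $m$-adic point $y$, and $\bar\pi$ to be the evident coarse inverse extracting $(x, y, t)$ from a point of $X_m$ (reading off $x,t$ from the hyperbolic factor and $y$ from which horoball the tree-point sits in).

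Next I would specify the metric on $\R \times \Q_m \times \R$ so that $\pi, \bar\pi$ become $1$-quasi-isometries: pull back the metric of $X_m$, or more concretely, write down the explicit formula analogous to the distance on $SOL$ quoted earlier in the paper—roughly $d((x_1,y_1,t_1),(x_2,y_2,t_2)) \asymp |t_1 - t_2| + e^{-(t_1 \wedge t_2)}|x_1 - x_2| + (\text{a tree-distance term in } y)$, where the $y$-term is the height in $T_{m+1}$ at which the $m$-adic points $y_1, y_2$ first become $m$-adically close, measured against $t_1 \wedge t_2$. One then checks that with $K = C = 1$ (allowing the additive constant to absorb the discretization $\lfloor t \rfloor$ vs.\ $t$ and the $m$-adic-interval ambiguity in $y$, both of which are $O(1)$) the maps $\pi$ and $\bar\pi$ are isometries up to bounded error and are coarse inverses. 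Injectivity of $\bar\pi$ on $BS(1,m)$ follows because $BS(1,m)$ embeds in $X_m$ via the orbit map $x_0 \mapsto \gamma x_0$, and distinct group elements land at distinct points of $X_m$ whose $(x,y,t)$-coordinates are then distinct: the $t$-coordinate records the image under the map $BS(1,m) \to \Z$ killing $a$, the $y$-coordinate records the position in the tree, and the $x$-coordinate distinguishes the remaining ambiguity, exactly as $a^k t^\ell \mapsto$ a lattice point in the $SOL$-like model. It is cleanest to just verify directly on the standard normal form for elements of $BS(1,m)$ that $\bar\pi$ separates points.

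The \textbf{main obstacle} is bookkeeping rather than conceptual: the tree $T_{m+1}$ is $(m{+}1)$-valent, so a vertex has one parent and $m$ children, and the identification of its end space below a fixed end with $\Q_m$ requires a consistent choice of $m$-ary digit at each height; one must check that the ambiguity introduced (a point at height $t$ really corresponds to an $m$-adic ball of radius $m^{-t}$, not a point) is uniformly bounded in the pulled-back metric and hence harmless for a quasi-isometry with additive constant $1$. I would handle this by choosing, once and for all, a geodesic ray section $T_{m+1} \to (\text{ends}) = \Q_m$ and defining $\bar\pi$ using it, so that the composite $\bar\pi \circ \pi$ moves each point by at most the diameter of one such $m$-adic ball together with the rounding $t \mapsto \lfloor t\rfloor$, all $\le 1$ after suitably normalizing the metric. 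The only other point requiring a little care is that $X_m$ is metrically a union of hyperbolic planes glued along horoball complements at integer heights, so the horocyclic $x$-coordinate is only globally defined after this gluing is taken into account; but since the gluing is by isometries that fix the horocyclic coordinate, the formula for $d$ above is well-defined and the verification goes through. I would end by remarking, as the paper's Remark already hints, that this is the exact analogue of the standard $(x,y,t)$ model of $SOL$ with one hyperbolic factor replaced by the tree/$\Q_m$ factor, so the subsequent sections can proceed verbatim.
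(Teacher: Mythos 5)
Your approach is essentially the same as the paper's: both describe a point of $X_m$ by a pair of boundary coordinates $(x,y) \in \partial_\ell X_m \times \partial_u X_m \cong \R \times \Q_m$ determining a vertical geodesic together with a height $t$, define $\pi$ as the resulting parametrization and $\bar\pi$ as a choice of section, and equip $\R \times \Q_m \times \R$ with a metric of the $SOL$-type so that both maps become $(1,1)$-quasi-isometries. The paper's metric is $d\big((x_1,y_1,t_1),(x_2,y_2,t_2)\big) = m^{-\frac{t_1+t_2}{2}}|x_1-x_2| + m^{\frac{t_1+t_2}{2}}d_{\Q_m}(y_1,y_2) + |t_1-t_2|$, which is the same in spirit as your proposed formula.

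One small defect worth correcting: you define $\pi(x,y,t)$ to have hyperbolic coordinate at height $t$ but tree coordinate at height $\lfloor t \rfloor$. That pair does not satisfy the fibered-product compatibility $b(\xi) = h(v)$ when $t \notin \Z$, so it is not a point of $X_m$. Since $T_{m+1}$ is a metric simplicial tree, $h$ is defined at edge points too, and the correct tree coordinate is the point at height $t$ on the geodesic toward the end $y$ — no floor needed. Relatedly, your observation that $\bar\pi$ is injective can be made immediate (no normal-form computation required): $\bar\pi$ is by construction a right inverse of $\pi$, i.e.\ $\pi \circ \bar\pi = \mathrm{id}_{X_m}$, so $\bar\pi$ is injective on all of $X_m$, a fortiori on the orbit $BS(1,m)\cdot x_0$. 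The phrase ``upper-half-plane coordinates $(x,t)$ with $b(x,t)=t$'' should also be read as horospherical (log-height) coordinates rather than the raw upper half-plane; this is purely notational.
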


The above lemma will be proved as part of the proof of the following theorem. 

\begin{theorem}[Farb-Mosher \cite{FM2}]
For any $(K,C)$ quasi-isometry $\phi: X_m \to X_m$, the map 
$$\pi \circ \phi \circ \bar{\pi}: \R \times \Q_m \times \R \to \R \times \Q_m \times \R$$ 
is at bounded distance $A=A(K,C)$ from a map $F$ of the form 
$$F(x,y,t) =(f_\ell(x), f_u(y), t),$$
where the $f_\ell, f_u$ are $L$-biLipschitz maps of $\R$ and $\Q_m$ respectively and where the 
constant $L$ depends only on $K$ and $C$ provided that some base point is $C$-coarsely fixed by $f$. 
\end{theorem}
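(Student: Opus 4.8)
The plan is to reduce the statement to the quasi-isometric rigidity theorem of Farb--Mosher for $X_m$ together with the explicit coordinate description supplied by the (as yet unproved) lemma. First I would construct the metric space $\R \times \Q_m \times \R$ and the maps $\pi,\bar\pi$. The key geometric fact is that $X_m$, built as a horocyclic product of $T_{m+1}$ and $\mathbb{H}^2_m$, carries coordinates analogous to the $(x,y,t)$ coordinates on $SOL$: the ``time'' coordinate $t$ is the common height function $\bar h$; the ``expanding'' coordinate should record the position in the boundary minus one point of $\mathbb{H}^2_m$, which is $\R$; and the ``contracting'' coordinate records the position in the boundary minus one point of $T_{m+1}$, which is naturally identified with the $m$-adic rationals $\Q_m$ (equivalently $\Q$ with the valuation that makes $m$ small). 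One equips $\R\times\Q_m\times\R$ with the $SOL$-type distance $d\big((x_1,y_1,t_1),(x_2,y_2,t_2)\big) = m^{-(t_1+t_2)/2}|x_1-x_2| + m^{(t_1+t_2)/2}\,\rho_m(y_1,y_2) + |t_1-t_2|$, where $\rho_m$ is the standard $m$-adic metric; then $\pi$ sends a triple to the point of $X_m$ at height $t$ lying over the prescribed endpoints, and $\bar\pi$ reads off the coordinates of a point of $X_m$ (making an arbitrary bounded choice for the discrete ambiguities coming from the tree). One checks directly that these are $(1,1)$-quasi-isometries and coarse inverses, and that on the embedded Cayley graph $BS(1,m)\hookrightarrow X_m$ the map $\bar\pi$ can be arranged to be genuinely injective, since distinct group elements sit over distinct vertices or distinct heights.

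Next I would invoke Farb--Mosher's quasi-isometric rigidity for $X_m$ \cite{FM2}. Their theorem states that any self-quasi-isometry of $X_m$ coarsely preserves the ``vertical'' foliation by hyperbolic planes up to the action on the boundary circle, and is coarsely a product of a bi-Lipschitz map of the horocyclic $\R$-factor and a bi-Lipschitz (similarity-type) map of the $\Q_m$-factor, preserving height up to an additive constant. Transporting this statement through $\pi$ and $\bar\pi$ --- which costs only a bounded additive error because $K=C=1$ for those maps --- converts a quasi-isometry $\phi$ of $X_m$ into a map $\pi\circ\phi\circ\bar\pi$ of $\R\times\Q_m\times\R$ that is within bounded distance of $(x,y,t)\mapsto(f_\ell(x),f_u(y),t)$ with $f_\ell$ bi-Lipschitz on $\R$ and $f_u$ bi-Lipschitz on $\Q_m$. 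The height coordinate is fixed outright (no additive constant) once we impose that a base point is $C$-coarsely fixed by $\phi$; composing with a translation in the $\R$-direction of $X_m$ arranges this, and the dependence of $L$ on $K,C$ alone follows because the rigidity statement is uniform and the base-point normalization kills the otherwise-free translation parameter.

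The steps, in order: (1) define $\R\times\Q_m\times\R$ with its $SOL$-type metric; (2) define $\pi,\bar\pi$ via the horocyclic-product structure of $X_m$ and verify they are $(1,1)$-quasi-isometries and coarse inverses, with $\bar\pi$ injective on $BS(1,m)$ --- this establishes the Lemma; (3) quote the Farb--Mosher structure theorem for self-quasi-isometries of $X_m$; (4) conjugate by $\pi,\bar\pi$ to obtain the claimed product form for $\pi\circ\phi\circ\bar\pi$, tracking the additive constants; (5) use the coarse-fixing normalization to eliminate the height translation and to make $L$ depend only on $K,C$. I expect the main obstacle to be step (2): pinning down exactly which metric on $\R\times\Q_m\times\R$ makes $\pi$ and $\bar\pi$ isometric up to bounded error --- in particular handling the discrete/branching ambiguities of the tree factor so that $\bar\pi$ is well defined as an honest map and is injective on the group, while the combined distortion stays bounded by an absolute constant. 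Once the coordinates are set up correctly, steps (3)--(5) are essentially a transcription of Farb--Mosher's theorem into these coordinates, exactly parallel to the $SOL$ case handled via Eskin--Fisher--Whyte in Section~\ref{sol:sec}.
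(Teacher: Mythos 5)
Your plan follows the paper's argument essentially line for line: both define $\pi,\bar\pi$ via the identification of $\partial_\ell X_m\simeq\R$ and $\partial_u X_m\simeq\Q_m$ through vertical geodesics, endow $\R\times\Q_m\times\R$ with the same $SOL$-type metric, and then transport Farb--Mosher's rigidity theorem (that quasi-isometries of $X_m$ coarsely preserve height and vertical geodesics, inducing biLipschitz boundary maps) through the coarse conjugation by $\pi,\bar\pi$. No meaningful divergence from the paper's proof.
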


%there is a bijective correspondence between vertices of $BS(1,m)$ and sets of the form $$[im^t,(i+1)m^t] \times B(x,m^{-t}) \times \{t\}$$ where $i \in \Z$. 

\begin{proof} All of this can be found in \cite{FM1,FM2} albeit using slightly different language and so we reconcile these differences here. In \cite{FM1},  Farb and Mosher first define a \emph{lower boundary} $\partial_\ell X_m$ and \emph{upper boundary}  $\partial_u X_m$ as equivalence classes of \emph{vertical geodesics} in $X_m$. Vertical geodesics are those geodesics $\gamma(t)$ that project isometrically to $\R$ under $\bar{h}$ and are parametrized so that $\bar{h}(\gamma(t))=t$. Two vertical geodesics are equivalent in the lower boundary if they stay a bounded distance apart in $X_m$ as $t \to -\infty$, and they are equivalent in the upper boundary if they stay bounded distance apart as $t\to \infty$.  One can identify $\partial_\ell X_m \simeq \partial \mathbb{H}_m^2\setminus \{\infty \} \simeq \R$ and $\partial_u X_m \simeq \partial T_{m+1}\setminus \{\infty \} \simeq \Q_m$. To specify a vertical geodesic in $X_m$, is it enough to chose $x \in \partial_\ell X_m \simeq \R$ and $y \in \partial_u X_m \simeq \Q_m$. To specify a point on that geodesic, one simply specifies its height $t\in \R$. 
This defines a natural projection map
$$\pi: \R \times \Q_m \times \R \to X_m,$$
where $(x,y,t)$ is mapped to the the point at height $t$ in $X_m$ on the vertical geodesic with lower boundary endpoint $x \in \partial_\ell X_m$  and upper boundary endpoint $y \in \partial_u X_m$. 
Note that this map is not injective. Indeed any point $(x,y',t)$ with $y'\in B_{\Q_m}(y,r)$ where $-\log_m r=t$ also defines the same point in $X_m$. We can define a nice coarse inverse map $\bar{\pi}: X_m \to \R\times\Q_m \times \R$  by sending $v \in X_m$ to an arbitrary point $(x,y,t)$ with $\pi(x,y,t)=v$. The only choice one has in defining $\bar{\pi}$ is in the $y$ coordinate.  The following picture illustrates a possible choice for the $\bar{\pi}$ map restricted to the tree coordinate.
%\begin{figure}[h]
%\center
%\setlength{\unitlength}{.25in} 
%\begin{picture}(9,4.6)(1,.5) 
%\linethickness{1pt} 
%
%%%% TREE
%\put(1,1){\line(1,2){1.78}}
%\put(2,1){\line(0,1){2}}
%\put(3,1){\line(-1,2){1}}
%
%\put(1,1){\circle*{.15}}
%\put(2,1){\circle*{.15}}
%\put(3,1){\circle*{.15}}
%
%\put(2,3){\circle*{.15}}
%\put(2.78,4.6){\circle*{.15}}
%
%%%%MAP
%\put(4,3){\vector(2,0){2}}  
%\put(5,3.25){\makebox(0,0){$\bar{\pi}$}} 
%
%%% Q_3
%\multiput(7,1)(0,.35){11}{\line(0,1){0.1}} 
%\multiput(7.3,1)(0,.35){11}{\line(0,1){0.1}} 
%\multiput(7.6,1)(0,.35){11}{\line(0,1){0.1}} 
%\multiput(8,1)(0,.35){11}{\line(0,1){0.1}} 
%\multiput(8.3,1)(0,.35){11}{\line(0,1){0.1}} 
%\multiput(8.6,1)(0,.35){11}{\line(0,1){0.1}} 
%\multiput(9,1)(0,.35){11}{\line(0,1){0.1}} 
%\multiput(9.3,1)(0,.35){11}{\line(0,1){0.1}} 
%\multiput(9.6,1)(0,.35){11}{\line(0,1){0.1}} 
%
%\put(7.3,1){\circle*{.15}}
%\put(8,1){\circle*{.15}}
%\put(9,1){\circle*{.15}}
%\put(7,3){\circle*{.15}}
%\put(8.6,4.6){\circle*{.15}}
%
%
%\end{picture}
%\caption{A possible choice for $\bar{\pi}$ shown only in projection onto $T_{3+1}$ and $\Q_3 \times \R$.}
%\end{figure}
%

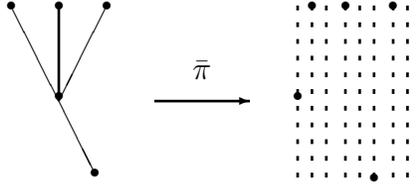
\begin{figure}[h]
\center
\setlength{\unitlength}{.25in} 
\begin{picture}(9,4.6)(1,.5) 
\linethickness{1pt} 

%%% TREE
\put(1,4.6){\line(1,-2){1.78}}
\put(2,4.6){\line(0,-1){2}}
\put(3,4.6){\line(-1,-2){1}}

\put(1,4.6){\circle*{.15}}
\put(2,4.6){\circle*{.15}}
\put(3,4.6){\circle*{.15}}

\put(2,2.7){\circle*{.15}}
\put(2.75,1.1){\circle*{.15}}

%%%MAP
\put(4,2.6){\vector(2,0){2}}  
\put(5,3.25){\makebox(0,0){$\bar{\pi}$}} 

%% Q_3
\multiput(7,1)(0,.35){11}{\line(0,1){0.1}} 
\multiput(7.3,1)(0,.35){11}{\line(0,1){0.1}} 
\multiput(7.6,1)(0,.35){11}{\line(0,1){0.1}} 
\multiput(8,1)(0,.35){11}{\line(0,1){0.1}} 
\multiput(8.3,1)(0,.35){11}{\line(0,1){0.1}} 
\multiput(8.6,1)(0,.35){11}{\line(0,1){0.1}} 
\multiput(9,1)(0,.35){11}{\line(0,1){0.1}} 
\multiput(9.3,1)(0,.35){11}{\line(0,1){0.1}} 
\multiput(9.6,1)(0,.35){11}{\line(0,1){0.1}} 

\put(7.3,4.6){\circle*{.15}}
\put(8,4.6){\circle*{.15}}
\put(9,4.6){\circle*{.15}}
\put(7,2.7){\circle*{.15}}
\put(8.6,1){\circle*{.15}}

\end{picture}
\caption{A possible choice for $\bar{\pi}$ shown only in projection onto $T_{3+1}$ and $\Q_3 \times \R$.}
\end{figure}

%\marginpar{I think this picture should be up-down reversed}

We endow $\bar{X}_m$  with a metric that is coarsely equivalent to the induced metric from $X_m$: %for each $y \in \Q_m$ the subset 
%$\R \times \{y\}\times \Z $  as a subspace of
%$\R \times \{y \} \times \R$ is isometric to $\mathbb{H}^2_m$ via the metric $ds^2=e^{-2mt}dx^2 + dt^2$ and  $$d((x,y,t), (x,y',t))=m^{- t }d_{\mathbb{Q}_m}(y,y').$$
%OR 
%we can define the metric to be 
$$d_{\bar{X}_m} \big( (x_1,y_1,t_1), (x_2,y_2,t_2) \big) 
:= m^{-\frac{t_1+t_2}{2}} |x_1 - x_2| 
+ m^{\frac{t_1+t_2}{2}} d_{\mathbb{Q}_m}(y_1, y_2) + |t_1 - t_2|.$$

%Then any ball of radius one at height $t \in \Z$ around the point $(x,y,t)$ is given by 
%$B_\R(x,m^t)\times B_{\Q_m}(y,m^{-t}) \times \{t\}$. 

In \cite{FM1,FM2} it is shown that every quasi-isometry  $\phi:X_m\to X_m$ preserves height level sets up to bounded distance and  maps vertical geodesics to within bounded distance of vertical geodesics. They use this information to define induced boundary maps $f_\ell, f_u$ and conclude that these maps are biLipschitz. 
\end{proof}

%Note hat for all $v\in BS(1,m) \subset X_m$ we have $\pi(v)=(x,y,t)$ where $x \in \Z[\frac{1}{m}] \subset \R$, $y\in\Z[\frac{1}{m}] \subset \Q_m$ and where $t=\bar{h}(v)$. 
%Also a ball of radius one around $v$ restricted to height $t\in \Z$ can be given by \footnote{Some of this can be left for remarks after the lemma}
%$$[x-m^t,x+m^t] \times B_{\Q_m}(y,m^{-t}) \times \{t\}.$$ 

%The above lemma makes clear the similarities between $BS(1,m)$ and lattices in $SOL$. These similarities extend to the definition of F\o lner sets. 

% We will identify  $BS(1,m)$ with its image $\bar{\pi}(BS(1,m)) \subset \R \times \Q_m \times \R$ in what follows and refer to $\bar{\pi}(BS(1,m))$ simply as $BS(1,m)$.
%since the non uniqueness of $\bar{\pi}$ will not com into play in the sets that we are concerned with.

%In order to be able to define F\o lner sets in $\bar{X}_m$ we endow it with a measure. Since we will only be working with product sets $ U \times V \times W \subset \bar{X}_m$  we define the measure $|U\times V\times W|$ to be the product of the usual measures of $U \subset \R$, $V \subset \Q_m$ and $W \subset \Z$. Note that the action of $BS(1,m)$ on $X_m$ induces an action of $BS(1,m)$ on $\bar{X}_m$. This action is measure preserving and by isometries. 

We then set $\bar{X}_m:=\R \times \Q_m \times \R$ and identify $BS(1,m)$ with its image $\bar{\pi}(BS(1,m))$.  
If we choose our coordinates so that  the identity in $BS(1,m)$ has coordinates $(0,0,0)$ then any $v \in BS(1,m)$ has coordinates $(x,y,t)$ with $x \in \Z[\frac{1}{m}]\subset \R$, $y \in \Z[\frac{1}{m}] \subset \Q_m$ and $t \in \Z$. 

Note also that there is an action of $BS(1,m)$ on $\bar{X}_m$ given by its action on vertical geodesics in $X_m$. Similarly to 
$\Gamma \subset SOL$, one can show (see Lemma \ref{folBS:lemma} below) that F\o lner sets in $BS(1,m)$ can be given by 
intersecting with $BS(1,m)$ translates of the set
$$S_{N} := [0,N) \times B(0,1) \times [0, \log_m N),$$
where $B(0,1)$ is a ball of radius one in $\Q_m$.
%When $N$ is a power of $m$ the counting measure on $BS(1,m)$ and the above measure on $\bar{X}_m$ match up so that 
When $N$ is a power of $m$ we have  
$$|BS(1,m)\cap S_N|=N \log_m{N},$$
but otherwise $|BS(1,m)\cap S_N|=\lfloor N \rfloor \lfloor \log_m N \rfloor$.
We will write $|S_N|$ for $|BS(1,m) \cap  S_N|$ in either case.
%When $r=m^i$ and $s=m^j$ for some $i,j \in \Z$ the volume of $S_{r,s}$ and  the number of vertices of $BS(1,m)$ in $S_{r,s}$ coincide and is given by 
%$$(\log_m s + \log_m r)m^{(\log_m s + \log_m r)}=rs \log_m{rs}=(i+j)m^{i+j} .$$

%If $r,s$ are not integral powers of $m$ the number of vertices in $S_{r,s}\cap BS(1,m)$ is given by  $(\lceil \log_m s \rceil  + \lfloor \log_m r\rfloor)m^{(\lceil \log_m s \rceil  + \lfloor \log_m r\rfloor)}$ 
%so that  $$ \frac{1}{C'} |S_{r,s}| \leq |S_{r,s}\cap BS(1,m)| \leq C' |S_{r,s}|.$$

The figure below shows a F\o lner set  in $BS(1,2) \subset X_2$, where a portion of a $\mathbb{H}_2^2 \subset X_2$ component is on the left, a portion of the projected view onto $T_{2+1}$  is on the right, and the projected view onto $\mathbb{H}^2_2$ (showing all points) is in the middle. 
%{\textcolor{green}{This F\o lner set could be $S_{8,0}$ or $S_{1,4}$ for example.\marginpar{I do not understand this last comment.}}}

\begin{figure}[htbp]
\begin{center}
\includegraphics[scale=.5]{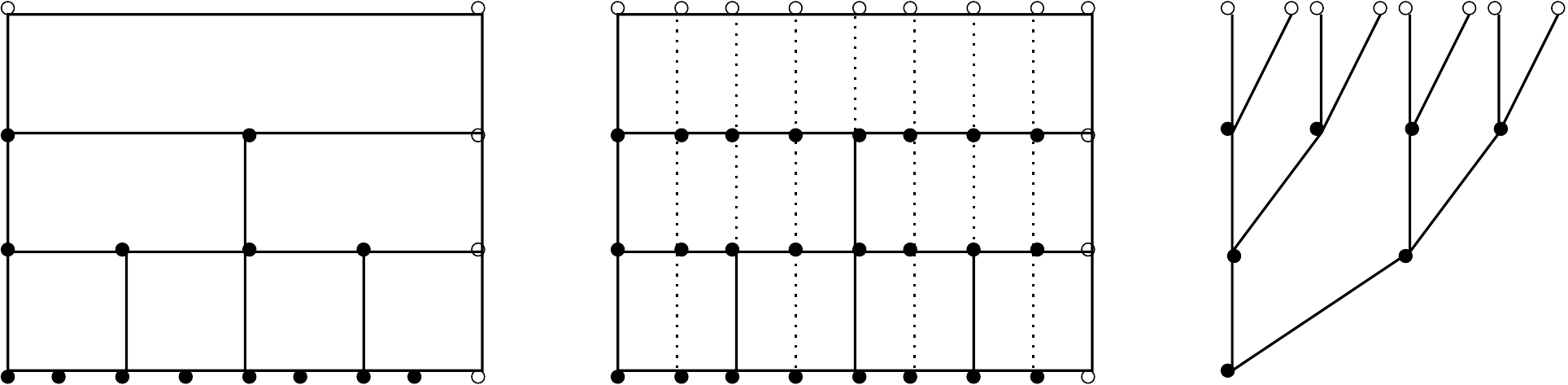}
\caption{A F\o lner set in $BS(1,2)$ containing $3 \cdot 2^3$ elements.}
\label{default}
\end{center}
\end{figure}

The following lemma shows that $S_N\cap BS(1,m)$ is indeed a F\o lner sequence. The slightly more general statement will be useful in later proofs. 

\begin{lemma}\label{folBS:lemma} Suppose $U_{rN}$  is an interval of length $rN$. Then any set of the form 
$$S_N'=U_{rN} \times B(0,1) \times [0, \log_m{N})$$ 
for a fixed $r$ is a F\o lner set.
\end{lemma}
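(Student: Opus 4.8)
The plan is to mimic the proof of the corresponding statements in the $SOL$ and higher-rank cases, using the metric $d_{\bar X_m}$ and the explicit formulas for volume and boundary of boxes. First I would write down the volume of the relevant set. Since the volume form on $\bar X_m$ (with respect to the metric $d_{\bar X_m}$) is Haar measure for $BS(1,m)$ acting on $\bar X_m$, the set $S_N' = U_{rN}\times B(0,1)\times[0,\log_m N)$ has volume comparable to $rN\log_m N$: the $x$-direction contributes a factor $rN$ (after the $m^{-(t_1+t_2)/2}$ weighting is absorbed into the measure, exactly as in the $SOL$ computation in Corollary \ref{sol:corollary}), the $\Q_m$-ball of radius $1$ contributes a bounded factor, and the height interval contributes $\log_m N$.

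Next I would estimate the $\varepsilon$-boundary $\partial_\varepsilon S_N'$. This is the union of the ``side'' contributions: the two ends of the interval $U_{rN}$ in the $x$-direction, the top and bottom faces at heights $0$ and $\log_m N$, and the lateral boundary coming from the $\Q_m$-ball. As in the proof of Corollary \ref{sol:corollary} (and Lemma \ref{lem:border}), each such face has volume that is $O(rN)$ with the constant depending on $\varepsilon$ and on $r$ — the $x$-endpoint faces integrate a bounded interval in $x$ against the full height range, the top/bottom faces integrate $U_{rN}$ against a bounded height band, and the geometry of the tree (horoball gluing) makes the $\Q_m$-boundary contribution of lower order, just as the ``$+\,r$'' term was negligible compared to $e^{r}$ in the $SOL$ case. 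Hence $|\partial_\varepsilon S_N'| = O(rN)$ while $|S_N'|$ grows like $rN\log_m N$, so the ratio $|\partial_\varepsilon S_N'|/|S_N'| = O\!\big(1/\log_m N\big) \to 0$ as $N\to\infty$ with $r$ fixed.

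Finally, to conclude that $S_N' \cap BS(1,m)$ is genuinely a F\o lner set in the group (counting-measure sense), I would invoke Lemma \ref{compare:lemma}: since $BS(1,m)$ is a uniform lattice in $\mathrm{Isom}(X_m)$ and we have chosen $d_{BS(1,m)}$ to be the restriction of $d_{\bar X_m}$, that lemma gives $|\partial_R(S_N'\cap BS(1,m))|/|S_N'\cap BS(1,m)|$ sandwiched between fixed multiples of the continuous boundary ratios $|\partial_{R\pm 2D}S_N'|/(|S_N'|\pm|\partial_D S_N'|)$, which both tend to $0$. Taking $N=m^k$ (or rounding, as noted in the text) produces the required F\o lner sequence.

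The main obstacle I expect is the boundary estimate in the tree (i.e. $\Q_m$) direction: one must check that the horoball-complement gluing in $X_m$ really does make the $\Q_m$-lateral contribution of strictly lower order than $rN\log_m N$, rather than, say, comparable to the volume. This is the analogue of verifying that the ``$+\,r$'' term is negligible next to $e^{r}$ in Corollary \ref{sol:corollary}, and it is where the specific geometry of $X_m$ (a point at height $t$ sees a $\Q_m$-ball of radius $m^{-t}$) has to be used carefully; everything else is a routine transcription of the $SOL$ argument.
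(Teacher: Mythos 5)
Your plan takes a genuinely different route from the paper. You propose a continuous argument — Haar/volume and metric $\varepsilon$-boundary estimates in the model space $\bar X_m = \R\times\Q_m\times\R$, followed by a transfer to counting measure via Lemma~\ref{compare:lemma}. The paper instead works entirely inside the discrete group: it decomposes the $1$-boundary of $S'_N\cap BS(1,m)$ into a lateral part and a top/bottom part, counts lattice points directly (the lateral boundary is twice the vertex count of a rooted $m$-ary tree of height $\log_m N-1$, namely $2\tfrac{N-1}{m-1}$; the top/bottom is $2\lfloor rN\rfloor$), and then passes from the $1$-boundary to the $R$-boundary by the crude estimate $|\partial_R S'_N| \le e^{4R}|\partial_1 S'_N|$ available because $BS(1,m)$ is two-generated. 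Dividing by $|S'_N| = \lfloor rN\rfloor\log_m N$ gives the $O(1/\log_m N)$ decay.

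There are two concrete gaps in your route. First, Lemma~\ref{compare:lemma} is stated and proved for a uniform lattice in a \emph{Lie} group, but $X_m$ is not a Lie group (its isometry group is locally compact with a totally disconnected factor coming from $T_{m+1}$). The lemma's proof via a fundamental domain and Haar measure would plausibly extend to this setting, but that extension is not in the paper and you would have to supply it; the paper sidesteps the issue entirely by never introducing a continuous measure on $\bar X_m$ and instead counting group elements from the start. Second, and more centrally, you flag the $\Q_m$-direction boundary estimate as the expected obstacle and then leave it unresolved. In fact the resolution is not that the tree contribution is ``of lower order'': the ultrametric on $\Q_m$ makes balls clopen, so there is no genuine $\Q_m$-face, but the tree branching \emph{replicates} the $x$-endpoint faces — at height $t$ there are $m^t$ copies of the two $x$-endpoints, giving $\sum_{t=0}^{\log_m N-1} 2m^t = 2\tfrac{N-1}{m-1}$, which is $O(N)$, the same order as the top/bottom contribution $O(rN)$, not smaller. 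Both are small only relative to the volume $rN\log_m N$. Your analogy with the ``$+r$'' term in Corollary~\ref{sol:corollary} (the $t$-direction boundary in $SOL$, which is genuinely of lower order than $e^r$) therefore points at the wrong face; the correct analogue of the subtle part here is the tree-replication phenomenon, and some version of the paper's explicit vertex count (or an equivalent argument) is unavoidable.
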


\begin{proof} To compute the boundary of $S'_N$ it is useful to consider $\pi(S'_N)$. Doing this, and arguing as in Lemma \ref{lem:border}, 
it becomes clear that the $1$-boundary of $S'_N$ can be decomposed into (1) and (2) below:
\begin{enumerate}
\item $|\partial_1 ( U_{rN} {{\times B(0,1))}}|$: this is twice the number of vertices in a rooted tree of height 
$\log_mN-1$ with branching constant $m$, that is, $2\frac{N-1}{m-1}$; 
\item  $|\partial_1 [0, \log_mN)|$: this is twice the number $\lfloor rN \rfloor$.  
\end{enumerate}
{{Since $BS(1,m)$ is generated by two elements, an obvious argument gives}} 
that the $R$-boundary of $S'_N$ is bounded by 
$$ |\partial_R S'_N|\leq e^{4R} \left( 2\lfloor rN\rfloor + 2 \frac{N-1}{m-1} \right).$$
Since the volume of $S'_N$ is given by $\lfloor  rN \rfloor  \log_mN$, this yields the ratio
$$ \frac{|\partial_R S'_N |}{|S'_N|} \leq \frac{ e^R (2\lfloor rN\rfloor + 2 \frac{N-1}{m-1})}{\lfloor  rN \rfloor  \log_mN} \sim \frac{1}{\log_m N}
$$
which goes to $0$ as $N \to \infty$. 
\end{proof}

Next we need to estimate the size of $|F(S_{N}) \cap BS(1,m)|$.

\begin{lemma}\label{volstr:lemma} If $F(S_N)= U_{rN} \times U_s \times [0, \log_m N)$, 
then for large enough $N$,
$$ \left(\frac{1}{L^2}- \epsilon \right)|S_N| \leq |F(S_N)| \leq L^4 |S_N|,$$ 
where $\epsilon \to 0$ as $N \to \infty$. 
\end{lemma}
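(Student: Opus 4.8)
The plan is to estimate $|F(S_N)|$ and then compare it to $|S_N|$ using the general machinery already built. First, I would recall that $F$ is the companion quasi-isometry to a $K$-biLipschitz map $f:\D\to\Gamma$ (here $\Gamma = BS(1,m)$), so $F(x,y,t) = (f_\ell(x), f_u(y), t)$ with $f_\ell, f_u$ both $L$-biLipschitz. Thus the image $F(S_N)$ is a ``modified box'' of the form $U_{rN}\times U_s \times [0,\log_m N)$, where $rN$ is the length of $f_\ell$ applied to an interval of length $N$ and $s$ is the ``width'' of $f_u$ applied to a ball of radius $1$ in $\Q_m$. Since $f_\ell$ is $L$-biLipschitz, $\tfrac{1}{L}\le r \le L$, and since $f_u$ is $L$-biLipschitz on $\Q_m$, $\tfrac{1}{L}\le s \le L$.

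The key computation is the volume of $F(S_N)$ with respect to the invariant measure on $\bar X_m$. Arguing exactly as in the computation of $|S_N|$ (or, equivalently, as in the box volume computations of Lemma~\ref{lem:border} adapted to $\bar X_m$), the volume of $U_{rN}\times U_s \times [0,\log_m N)$ is comparable to $rs\, N\log_m N$ up to the floor/rounding discrepancies noted just before the lemma; in particular it lies between $\lfloor rN\rfloor \lfloor \log_m N\rfloor \cdot s'$-type bounds. The point is that $|F(S_N)|$ and $|S_N| = \lfloor N\rfloor\lfloor\log_m N\rfloor$ differ by exactly the factor $rs$ up to a lower-order correction that is $O(1/\log_m N) \cdot |S_N|$ coming from the rounding. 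Hence $|F(S_N)| = rs|S_N|(1 + o(1))$ as $N\to\infty$. Feeding in $r,s \in [1/L, L]$ then gives $rs \ge 1/L^2$ and $rs \le L^2$; absorbing the $o(1)$ error into an explicit $\epsilon$ on the lower side and into the crude bound $L^4$ (which has slack to spare, since $L^2(1+o(1)) \le L^4$ for large $N$) on the upper side yields the stated inequality $\big(\tfrac{1}{L^2}-\epsilon\big)|S_N| \le |F(S_N)| \le L^4|S_N|$.

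The one genuine subtlety — and the step I expect to be the main obstacle — is making precise the volume of a box in the ``upper'' $\Q_m$-direction. Unlike the $SOL$ case where the second factor is a genuine interval in $\R$, here $U_s$ is a metric ball (or a union of balls) in the totally disconnected group $\Q_m$, and its ``volume'' in the $t$-slices must be interpreted via the horoball-gluing structure of $X_m$: a ball of radius $s$ in $\Q_m$ contributes $\sim m^{t}\cdot s$ lattice points at height $t$ in a way that interacts with $\log_m$ in the floor function. One must check that $f_u$ being $L$-biLipschitz on $(\Q_m, d_{\Q_m})$ really does translate into the counting estimate $|F(S_N)\cap BS(1,m)| \asymp s\,|S_N|$ with $s$ controlled by $L$; this is exactly the content that the metric $d_{\bar X_m}$ on $\bar X_m = \R\times\Q_m\times\R$ was set up to make transparent, so I would carry it out by writing $F(S_N)$ in $\bar X_m$-coordinates, counting $BS(1,m)$-points via the same argument used for $|S_N\cap BS(1,m)|$ in Lemma~\ref{folBS:lemma}, and tracking the $L$-distortion. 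Once that counting is pinned down, the rest is bookkeeping with the rounding errors.
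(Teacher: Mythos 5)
Your outline points in the right direction---the paper's counting does, in effect, give $|F(S_N)| \sim rs\,|S_N|$ asymptotically---but the proposal stops at precisely the step you yourself flag as ``the main obstacle,'' and that step is where all the content of the lemma lives. The assertion $|F(S_N)\cap BS(1,m)| \asymp s\,|S_N|$ is not mere bookkeeping: $f_u(B(0,1))$ is generally \emph{not} a single ball in $\Q_m$, and the horoball identifications built into $X_m$ (equivalently, the non-injectivity of $\pi$ on $\bar X_m$) mean that at heights $t$ below the scale of the constituent pieces of $f_u(B(0,1))$ the lattice count in the $\Q_m$-direction collapses, so there is no single ``width'' $s$ multiplying the count uniformly over all heights. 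The appeal to Lemma~\ref{lem:border} is also misplaced here: that computation is for boxes in an abelian-by-abelian Lie group and does not port over to the $\Q_m$ factor.

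The paper supplies exactly what your sketch defers. It decomposes $U_s = f_u(B(0,1)) = \bigsqcup_{i\in I} A_i$ into finitely many disjoint balls, citing a structural fact about $L$-biLipschitz images of balls in $\Q_m$ (Proposition 4.3 of \cite{D}); it then shows $|A_i|\geq 1/L$, normalizes so that $|A_i| = m^{-j}\leq 1$ with $j$ depending only on $L$, and deduces $m^j/L \leq |I|\leq L^3$. With that control it counts $BS(1,m)$-points in each slab $U_{rN}\times A_i\times [j,\log_m N)$ exactly as $\lfloor rNm^{-j}\rfloor(\log_m N - j)$ (independent of the choice of $\bar\pi$), and bounds the contribution from heights in $[0,j)$---where each $A_i$ is smaller than the resolution and the count is $\bar\pi$-dependent---by an $O(N)$ error. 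Summing over $i\in I$ and using $|I|\leq L^3$, $r\leq L$ yields the $L^4|S_N|$ upper bound; using $|I|\geq m^j/L$, $r\geq 1/L$ yields $(\tfrac{1}{L^2}-\epsilon)|S_N|$. You should carry out this decomposition-and-count rather than gesture at it: as written, the central claim $|F(S_N)| = rs|S_N|(1+o(1))$ has not actually been established, and it is not obtained by ``the same argument used for $|S_N\cap BS(1,m)|$,'' since that argument concerns a single unit ball based at height $0$ and does not confront either the multi-ball structure of $U_s$ or the sub-resolution heights.
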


\begin{proof} %Consider $S_N=[0, N) \times B(0,1) \times [0, \log_mN)$
%with image
%$$F(S_N)=f_\ell([0,N)) \times f_u(B(0,1)) \times [0, \log_mN)= U_{rN} \times U_s \times [0, \log_mN).$$
Recall that $F = (f_{\ell}, f_u, id)$, where $f_{\ell}, f_u$ are $L$-biLipschitz. In particular, $1/L \leq r \leq L$.
As noted in the proof of Proposition 4.3 in  \cite{D}, the image of any ball in $\Q_m$ under an $L$-biLipschitz 
map is the disjoint union of finitely many balls of bounded size, where both the number of balls and their size 
are bounded by constants that depend on $L$ and the size of the original ball.
Specifically, $U_s=f_u(B(0,1))=\bigsqcup_{i \in I} A_i$, where $A_i$ are balls of 
size at least $|B(0,1)|/L=1/L$. Indeed, if $z \notin B(0,1)$, then $d_{\Q_m}(z,B(0,1)) > 1$ 
and therefore, for some $A_i$,
\begin{equation}\label{cloneeqn} d_{\Q_m}(f_u(z),A_i) =d_{\Q_m}(f_u(z),f_u(B(0,1)))> \frac{1}{L}.
\end{equation}
This implies that $|A_i| >\frac{1}{L}$, otherwise there would be some $y$ with
$d_{\Q_m}(y,A_i) \leq \frac{1}{L}$, and letting $z$ be such that $y := f_u(z)$ 
(remind that $f_u$ is surjective), this would contradict inequality (\ref{cloneeqn}). 

Without loss of generality, the $A_i$ can be chosen to be all of the same size.
Then since $|I||A_i|=|f_u(B(0,1))|$, we must have $\frac{1}{L}\leq|I||A_i|  \leq L $, and combining this with the inequality 
$|I|\frac{1}{L}<|I| |A_i|$, we get that $|I|< L^2$. We can further assume that $|A_i|=m^{-j}
\leq 1$ where $j$ depends only on $L$. (Since $|A_i|\leq L$ we can always achieve this by writing each $A_i$ as a union of at most 
$L$ balls of size at most $1$.) This increases $|I|$ by at most a factor of $L$, {{hence $|I| \leq L^3$.}} There is also 
a lower bound $\frac{m^j}{L}\leq |I|$, since $\frac{1}{L} \leq |I| |A_i|$.

%Specifically  $f_u(U_s)=\bigsqcup_{i \in I} A_i$ where $A_i$ are balls with size bounded by $\frac{1}{Lm} |U_s| < |A_i| \leq L |U_s|$ and with index set  $|I|\leq L^2m$ (and $|I|=L^2m$ exactly when $|A_i|=\frac{1}{Lm}$). 

Next we estimate the size of $U_{rN} \times A_i \times [0, \log_m N)$. 
First we estimate the size of the subset  
$$U_{rN} \times A_i \times [ -\log_m{|A_i|}, \log_m N)=U_{rN} \times A_i \times [ j , \log_m N).$$ 
For this subset, the choice of $\bar{\pi}$ does not change the number of points in 
$$BS(1,m) \cap (U_{rN} \times A_i \times [ j , \log_m N),$$ 
which is exactly $\lfloor rN  m^{-j} \rfloor  (\log_m N  -j)$. Therefore,
$$(rNm^{-j}-1)( \log_m N -j) \leq |U_{rN} \times A_i \times [ j , \log_m N)| \leq rN  m^{-j}   (\log_m N -j).$$
This gives us the lower bound 
\bea |F(S_N)| & \geq&  \sum_{i\in I} (rNm^{-j}-1) (\log_m N -j) \\
& \geq & \frac{m^j}{L}  (rNm^{-j}-1) (\log_m N -j)\\
& \geq &  \left(\frac{r}{L}- \frac{m^j}{LN} - \frac{jr}{L\log_mN}\right)|S_N|\\ 
& \geq & \left(\frac{1}{L^2}- \epsilon\right)|S_N|,
\eea
where the last inequality holds for large enough $N$ for a prescribed $\varepsilon > 0$.

%$$\frac{m^j}{L}  (rNm^{-j}-1) (\log_m N -j)\leq  \sum_{i\in I} (rNm^{-j}-1) (\log_m N -j)  \leq |F(S_N)|.$$
%
For the upper bound, we have to consider the extra points that may appear in the interval $[0,j)$. Here, for each $A_i$ 
there will be at most $j (rN)$ points, so that 
\bea
|F(S_N)|& \leq & \sum_{i\in I}\left( rN  m^{-j}   (\log_m N -j) + jrN\right)\\
& \leq &  L^3 rN  \log_m N  -L^3 rN j  + L^3 jrN\\
& \leq & L^3 r |S_N| \\
&\leq& L^4 |S_N|
\eea 
as desired.
\end{proof}

\begin{lemma}\label{folner:lemma} The image of the standard F\o lner set $S_N$ under $F$ is also a F\o lner set.
\end{lemma}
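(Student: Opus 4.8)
The plan is to reduce the claim to the volume and boundary estimates already established, exactly as in the $SOL$ and higher-rank cases. Recall $F = (f_\ell, f_u, \mathrm{id})$ with $f_\ell$ an $L$-biLipschitz map of $\R$ and $f_u$ an $L$-biLipschitz map of $\Q_m$. First I would use the description of $F(S_N)$: since $f_\ell$ is biLipschitz, $f_\ell([0,N))$ is an interval $U_{rN}$ with $1/L \le r \le L$ up to bounded error, and by the argument recalled in Lemma \ref{volstr:lemma} (following Proposition 4.3 of \cite{D}), $f_u(B(0,1))$ is a disjoint union $\bigsqcup_{i\in I}A_i$ of at most $L^3$ balls in $\Q_m$, each of size $m^{-j}$ with $j = j(L)$. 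Thus $F(S_N)$ is, up to bounded Hausdorff distance, a disjoint union over $i \in I$ of sets of the form $U_{rN}\times A_i \times [0,\log_m N)$, each of which is (a translate of) a standard-type F\o lner set $U_{r'N'}\times B(0,1)\times[0,\log_m N')$ after rescaling the height coordinate and the first factor by the constant $m^j$.

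The key steps, in order: (1) reduce to showing each piece $P_i := U_{rN}\times A_i\times[0,\log_m N)$ has $|\partial_R P_i|/|P_i| \to 0$, since $F(S_N)$ is a disjoint union of boundedly many such pieces together with a boundedly-thick extra layer over the height interval $[0,j)$, and a finite union of F\o lner sets with comparable volumes is F\o lner. (2) Observe that after the change of coordinates identifying $A_i$ with a unit ball (shifting height by $j$ and dilating the $x$-factor), $P_i$ is precisely of the form $S'_{N'}$ treated in Lemma \ref{folBS:lemma} with $N' = N m^{-j}$ and a new constant $r$; hence by that lemma $|\partial_R P_i|/|P_i| = O(1/\log_m N) \to 0$. (3) Handle the bounded-height extra layer: the set of points of $F(S_N)$ with height in $[0,j)$ has volume $O(rN)$, which is negligible compared to $|F(S_N)| \ge (L^{-2}-\epsilon)|S_N| \sim L^{-2} N\log_m N$ by Lemma \ref{volstr:lemma}; likewise its $R$-boundary contributes only $O(rN)$, again negligible against $|F(S_N)|$. (4) Conclude via Lemma \ref{volstr:lemma}: since $|F(S_N)|$ is comparable to $|S_N| = N\log_m N$ from both sides, and the total $R$-boundary of $F(S_N)$ is $O(N)$ (summing the $O(N)$ bounds over the $|I|\le L^3$ pieces and the extra layer), we get $|\partial_R F(S_N)|/|F(S_N)| = O(1/\log_m N) \to 0$.

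I expect the main obstacle to be bookkeeping the boundary contribution where the pieces $P_i$ meet each other and where the extra height layer $[0,j)$ attaches, i.e. making precise that $F(S_N)$ differs from the clean disjoint union $\bigsqcup_i P_i$ only by a set whose volume and $R$-boundary are $O(N)$. This is entirely analogous to the footnote remark in the proof of Lemma \ref{lem:border} that the cross terms between $\partial_\epsilon$ of the product of intervals and $\partial_\epsilon$ of $r\Omega$ are negligible; here the analogous statement is that the "interface" between adjacent balls $A_i$ in $\Q_m$ and the bottom slab contribute only lower-order volume because $|I|$ and $j$ depend only on $L$, not on $N$. Once that is granted, the estimate is immediate from Lemmas \ref{folBS:lemma} and \ref{volstr:lemma}, and the ratio decays like $1/\log_m N$.
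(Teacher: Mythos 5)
Your proof is correct and follows essentially the same approach as the paper's: decompose $F(S_N)$ into the $|I| \le L^3$ slabs $U_{rN}\times A_i\times[0,\log_m N)$, split each into the part with heights in $[j,\log_m N)$ (a translate of a standard F\o lner set, so Lemma~\ref{folBS:lemma} applies) plus a bottom slab of bounded height $j$ (volume and $R$-boundary both $O(N)$), and divide by the lower bound $|F(S_N)| \geq (L^{-2}-\epsilon)|S_N|$ from Lemma~\ref{volstr:lemma}. One minor imprecision: in your step (2) you say $P_i$ itself becomes $S'_{N'}$ after a coordinate change, which is only true after first discarding the bottom layer $[0,j)$ — but since you handle that layer separately in step (3), the argument as a whole is sound.
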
 

\begin{proof} 
We use some of the work from the proof of the previous lemma. As before, let 
$$F(S_N)= U_{rN} \times \bigsqcup_{i \in I} A_i \times [0, \log_m N),$$
with $|A_i|= m^{-j}$ and $\frac{m^j}{L} \leq |I|\leq L^3$.
Fix $i$ and let $S'= U_{rN} \times A_i \times [0, \log_m N)$. 
We will give an upper bound for $|\partial_R S'|$ and multiply this by $|I|$ to get an upper bound for $\partial_R F(S_N)$; 
then, we will combine it with the lower bound for $|F(S_N)|$ from Lemma \ref{volstr:lemma} to get the desired result. To 
that end, we decompose $S'$ as follows
$$S' =  U_{rN} \times A_i \times [j, \log_mN) \bigcup  U_{rN} \times A_i \times [0,j). $$
Note that $U_{rN} \times A_i \times [j, \log_mN)$ is a translate of the set
$$U_{rNm^{-j}} \times B(0,1)\times [0, \log_mN -j)=U_{rNm^{-j}} \times B(0,1)\times [0, \log_m (Nm^{-j}) ).$$
This allows us to appeal to Lemma \ref{folBS:lemma} to get that 
$$|\partial_R \left(U_{rN} \times A_i \times [j, \log_mN)\right)| \leq e^{4R} \left(2\lfloor rm^{-j}N\rfloor + 2 \frac{Nm^{-j}-1}{m-1}\right).$$
Next, for $\partial_R (U_{rN} \times A_i \times [0,j) )$ we have the estimate 
$$|\partial_R (U_{rN} \times A_i \times [0,j) )| \leq e^{4R} rN  j,$$
since in particular $|U_{rN} \times A_i \times [0,j) | \leq rNj$. 
Putting everything together we get that 
$$\frac{|\partial_R F(S_N) |}{|F(S_N)|} 
\leq 
\frac{e^{4R} \left(2\lfloor rm^{-j}N\rfloor + 2 \frac{Nm^{-j}-1}{m-1}\right)+e^{4R} rN  j }{ \big( \frac{1}{L^2} - \varepsilon \big) |S_N|}$$
which goes to $0$ as $N \to \infty$.
\end{proof}

%\begin{lemma}\label{distort:lemma} For any Folner set $S_{r,s}$ we have that 
%$\frac{1}{2L^2} \leq \frac{|F(S_{r,s})|}{|S_{r,s}|} \leq 2L^2 $ as long $rs$ is large enough.
%\end{lemma}
%\begin{proof} MAKE THIS AND THE PREVIOUS LEMMA ONE LEMMA
%\end{proof}

\subsection{Tiling.} Constructing $\mathcal{D}$ can be done exactly as for $SOL$. 
For $N$ a power of $m$, we consider the set $S_N$,  
%define
%$$S_N = [0,N)\times B(0, 1) \times [0, \log N)$$
%where $B(0,1)$ is a ball of radius one in $\Q_m$.  
so that $S_N\cap BS(1,m)$ contains exactly $N\log_m{N}$ elements. 
%so that we can write $|S_M|$ for $|S_N\cap BS(1,m)|$ without any confusion as long as $N$ is a power of $m$. 
%(For example the figure above shows what $S_{8}$ looks like in $BS(1,2)$.)
As before, we can tile the set $S_{N^2}$ by $2 N$ translates of $S_N$ in two layers of $N$ copies each:
$$T_{k,1}:=g_{k,1}S_N=[(k-1)N,kN) \times B(0,1) \times [0, \log_m{N}),$$
$$T_{k,2}:=g_{k,2}S_N=[0,N^2) \times B \big( z_k, \frac{1}{N} \big) \times [ \log_m{N}, 2 \log_m{N}).$$
Here, both $g_{k,1}$ and $g_{k,2}$ lie in $BS(1,m)$, and the points $z_k= \sum a_i m^i \in \mathbb{Q}_m$ satisfy $a_i=0$ for $i<0$ and $i\geq\log_m{N}$, 
and $a_0,a_2, \ldots, a_{\log_m{N}-1}$ lie in the set $\{0, \ldots, m-1 \}$. (Note that there are $m^{\log_m N}=N$ possibilities for $(a_0,a_2, \ldots, a_{\log_m{N}-1})$, hence $N$ points $z_k$.)

So now we fix $N_0$ a power of $m$ so that $|S_{N_0} \cap BS(1,m)|\geq12$ and let $Q_0=S_{N_0}\cap BS(1,m)$. (For $BS(1,2)$, the value of $N_0$ 
can be $8$, for example.)  Our four basic tiles $Q_1,Q_2,Q_3,Q_4$ can be defined by choosing
$d_1=1/3, d_2=1/2,d_3=1/4, d_4=7/12$ of the points from $Q_0$. 
If $m$ is even, then our tilling works exactly as before: to define $Q^1_1$  (resp. $Q^1_2$) we use $N_0/2$ translates of $Q_1$ (for each $T_{k,1}$ with $k$ even), $N_0/2$ translates of $Q_2$ (for each $k$ odd) and $N_0$ translates of $Q_3$ (resp. $Q_4$) for each $T_{k,2}$.  However, when $m$ is odd, $N_0$ will be odd as well, and so if we have $\lceil N_0/2 \rceil$ translates of $Q_1$ and $\lfloor N_0/2 \rfloor$ translates of $Q_2$, then if we replace all $T_{k,2}$ with $Q_3$ (resp. $Q_4$), the volume $|Q^1_1|$ (resp. $|Q^1_2|$) will not be exactly $d_1 |Q^1_0|$ (resp. $d_2|Q^1_0|$). To solve this problem, by replacing one of the $Q_3$ (resp. $Q_4$) tiles with a tile of slightly different density, we can again ensure that 
$Q^1_j= d_j|Q^1_0|$ for $j=1,2$.  Specifically, we replace a $Q_3$ (resp. $Q_4$) tile with a tile of density $d_3+\frac{1}{2}(d_2-d_1)$  (resp. $d_4 +\frac{1}{2}(d_2-d_1)$). One can check that if the $d_j$ are given as above, then this new tile has density $1/3$ (resp. $2/3$). In both cases, $Q^1_3$ and $Q^1_4$ are defined as before, by replacing all the tiles with $Q_3$ (resp. $Q_4$).

{\bf Remark.} When $m$ is odd, there is another modification that needs to be made in the proof. Namely, Lemma \ref{stretchlemma} which we appeal to later in the proof does not hold with an odd number of intervals. However, it does hold if we modify the lower bounds to $(1-2\lambda)$. This will only change the constants slightly in the subsequent lemmas and propositions. 

The following lemma, which is an analogue of Lemma \ref{getM0}, follows almost exactly as before.

\begin{lemma}\label{getM03}
There exist $M_0 > 0$ and $\epsilon_0 > 0$, both depending on $K,C$, satisfying the following: 
Suppose $M={m^j}> M_0$ and suppose $S^1,S^2$ are standard F\o lner sets in $BS(1,m)$ that are translates of $S_M$ %(so of volume $N\log{N}$) % (where $M=2^j$)
with $|\mathcal{D}\cap S_1|=d_1 |S_{M}|$ and $ |\mathcal{D}\cap S_2|=d_2|S_{M}|$.  
Suppose also that $f: \mathcal{D} \to BS(1,m)$ is $K$-biLipschitz, and that its companion quasi-isometry $F$ satisfies
$$F(S^1)= U_{rM} \times U_s \times [0,\log_m{M})$$% = \bigsqcup_{i \in I}\ (U_{rM} \times  A_i  \times [0, \log{M})),$$ 
$$F(S^2)=U_{r'M} \times U_s \times [0,\log_m{M})$$% = \bigsqcup_{i \in I}\ (U_{r'N}\times A_i \times [0, \log_m{M})),$$ 
where $U_s= \bigsqcup_{i \in I} A_i$. 
Then $|r'-r| > \epsilon_0$. 
\end{lemma}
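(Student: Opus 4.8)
The plan is to mirror the proof of Lemma \ref{getM0} almost verbatim, accounting only for the fact that the upper boundary component of an image box is not a single interval $U_s$ but a disjoint union $\bigsqcup_{i\in I}A_i$ of balls in $\Q_m$, whose total measure and cardinality are controlled by $L$ via Lemma \ref{volstr:lemma} and the estimates in its proof. First I would invoke Lemma \ref{first:lemma} (valid here with $G=X_m$, or rather $\bar X_m$) to get, for $i=1,2$,
$$|F(S^i)| - |\partial_{A+D}F(S^i)| \leq |f(\mathcal D\cap S^i)| \leq |F(S^i)| + |\partial_{A+D}F(S^i)|,$$
and rewrite the middle term as $d_i|S_M|$. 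Dividing through by $|S_M|$ and using Lemma \ref{folner:lemma} (images of standard F\o lner sets are F\o lner), for any prescribed $\delta>0$ there is an $M_0$ so that $M=m^j>M_0$ forces $|\partial_{A+D}F(S^i)|/|F(S^i)|<\delta$, giving
$$\frac{|F(S^i)|}{|S_M|}(1-\delta) \leq d_i \leq \frac{|F(S^i)|}{|S_M|}(1+\delta).$$

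Next I would compute $|F(S^i)|/|S_M|$ in terms of $r$ (resp.\ $r'$) and the data of $U_s=\bigsqcup_{i\in I}A_i$. From the proof of Lemma \ref{volstr:lemma}, writing $|A_i|=m^{-j_0}$ with $|I|$ comparable to $m^{j_0}$, one has $|F(S^i)| = rM\log_m M \cdot |I|m^{-j_0}(1+o(1))$ as $M\to\infty$, and since $|I|m^{-j_0}=|U_s|$ is the same for $S^1$ and $S^2$ (the hypothesis says both images have the same $U_s$), the ratio $|F(S^1)|/|F(S^2)|$ is, up to a factor $1+o(1)$, just $r/r'$. More precisely, $|F(S^i)|/|S_M| = r\,|U_s|\,(1+o(1))$ where the $o(1)$ error comes from the $\lfloor\cdot\rfloor$'s and the extra points in the height interval $[0,j_0)$, exactly as bounded in Lemma \ref{volstr:lemma}. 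Plugging into the two-sided inequality above and subtracting, as in Lemma \ref{getM0}, yields
$$C''|d_1-d_2| \leq |r-r'|\,|U_s| + \delta\,|U_s|(r+r') + (\text{error terms}),$$
and since $|U_s|\leq L$, $r,r'\leq L$, and $d_1\neq d_2$ are fixed, choosing $\delta$ small (hence $M_0$ large) and then $M>M_0$ forces $|r-r'|>\epsilon_0$ for an appropriate $\epsilon_0=\epsilon_0(K,C)$.

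The main obstacle, and the only real difference from the $SOL$ case, is bookkeeping the discrepancy between $|F(S_M)|$ and the clean quantity $r|U_s|M\log_m M$: the $\Q_m$-ball $U_s$ decomposes into $|I|$ balls of size $m^{-j_0}$, and counting lattice points in $U_{rM}\times A_i\times[0,\log_m M)$ produces both a floor-function error and an $O(j_0\, rM)$ contribution from heights in $[0,j_0)$. These are precisely the terms already estimated in the proof of Lemma \ref{volstr:lemma}, so I would simply cite those bounds to absorb everything into an $o(1)$ term as $M\to\infty$; once that is done, the pigeonhole/subtraction argument of Lemma \ref{getM0} goes through unchanged. (If $m$ is odd one should also keep in mind the slight adjustment to the tile densities noted in the Remark before this lemma, but that only affects the value of $|d_1-d_2|$ in the inequality, not the argument.)
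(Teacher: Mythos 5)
Your proposal follows essentially the same route as the paper's proof: invoke the two-sided inequality from Lemma~\ref{first:lemma}, use Lemma~\ref{folner:lemma} to make the boundary-to-volume ratio of $F(S^i)$ smaller than a chosen $\delta$ for $M>M_0$, bound $|F(S^i)|/|S_M|$ via Lemma~\ref{volstr:lemma}, and then subtract the two resulting bounds for $d_1$ and $d_2$ to force $|r-r'|>\epsilon_0$. Your bookkeeping of the floor-function and $[0,j_0)$ discrepancies as $o(1)$ errors is slightly more explicit than the paper, which simply writes $|F(S^i)|$ as $r_i s M\log_m M$ and absorbs the rest into the $L^4$ bound from Lemma~\ref{volstr:lemma}, but this is a cosmetic difference only.
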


\begin{proof}
As before, since 
$$ F(S^i) \setminus   \partial_{A}F(S^i) \subseteq  f(S^i\cap \mathcal{D}) \subseteq F(S^i) \cup \partial_{A}F(S^i),$$
we have that 
$$ |F(S^i)| - |\partial_{A}F(S^i)|  \leq d_i |S_{M}| \leq |F(S^i)| + |\partial_{A} F(S^i)|,$$
and so
$$ \frac{|F(S^i)|}{|S_{M}|} \left(1 - \frac{ |\partial_{A}F(S^i)|}{|F(S^i)|} \right) \leq d_i 
\leq \frac{|F(S^i)|}{|S_{M}|} \left(1 + \frac{ |\partial_{A} F(S^i)|}{|F(S^i)|} \right).$$
%By the proof of Lemma \ref{folner:lemma} we have that 
%$$ \frac{|\partial_R F(S_{N})|}{|F(S_{N})|} \leq \frac{ |\partial_R S_{|f_\ell(U_r)|, |A_i|}| + 4Ce^RL^4N }{ \frac{s}{L}  \frac{r}{L} (\log{s}+\log{N}-2\log{L})}=\frac{ |\partial_R S_{|f_\ell(U_r)|, |A_i|}| + 4Ce^RL^4N }{ \frac{N}{L^2} (\log{N}-2\log{L})}$$
By Lemma \ref{folner:lemma}, we have that for any $\delta>0$ we can chose $M$ large enough so that  
\begin{equation}\label{eq:nec}
 \frac{|\partial_R F(S_{M})|}{|F(S_{M})|} < \delta
\end{equation}
and by  Lemma  \ref{volstr:lemma} we know that for $i=1,2,$ 
$$ |F(S^i)|<L^4|S_M|.$$
Thus, 
\bea 
d_1-d_2 &\leq&  \frac{|F(S^1)|}{|S_M|} (1+\delta) -  \frac{|F(S^2)|}{|S_M|} (1-\delta)\\
& \leq & \frac{|F(S^1)|-|F(S^2)|}{|S_M|}  + \delta  \frac{|F(S^1)|+|F(S^2)|}{|S_M|} \\
%& \leq & \frac{|F(S^1)|-|F(S^2)|}{|S_M|}  + \delta  \frac{2L^2|S_M|}{|S_M|}. \\
%& \leq & \frac{|F(S^1)|-|F(S^2)|}{M\log_m{M}}  + \delta  2L^2. \\
& \leq & \frac{rsM \log_m{M} - r'sM\log_m{M}}{M\log_m{M}}  + 2\delta  L^4. \\
& \leq & (r - r')L  + 2\delta  L^4. 
\eea
Now pick $\delta < |d_1-d_2|/3L^4$ and $M_0$ such that for $M>M_0$ Equation (\ref{eq:nec}) holds. 
%we have that  $|d_1 -d_2| > 3L^4\delta$. 
Then for $\epsilon_0 = \delta L^3$ we must have $|r-r'|>\epsilon_0$, otherwise the above inequality would be violated.
\end{proof}

%%%%%%%%%%%%%%%%%%%%%%%%%%%%
\subsection{No $K$-bilipschitz maps}

We also need a slight modification of Lemma \ref{getM02}. 

\begin{lemma}\label{getsomething}  There exist $\lambda_0>0$ and $M_0>0$ 
such that if  $\lambda < \lambda_0$, $m^j=M\geq M_0$  and
\begin{equation}\label{eqnonenew}
\frac{|f_\ell((i-1)M) - f_\ell(iM)|}{M} 
\leq 
(1+\lambda)     \frac{|f_\ell(0) - f_\ell(M^2)|}{M^2}
\end{equation}
holds for $i=1, \ldots, M$, then $f:\D \to BS(1,m)$ cannot be $K$-biLipschitz. 
\end{lemma}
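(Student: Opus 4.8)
The plan is to mirror the proof of Lemma~\ref{getM02} almost verbatim, substituting the $BS(1,m)$ versions of the auxiliary results already established in this section. First I would observe that the hypothesis \eqref{eqnonenew}, together with the appropriate version of Lemma~\ref{stretchlemma}, produces an index $k \in \{1, \ldots, M\}$ such that both intervals $[(k-1)M, kM]$ and $[kM, (k+1)M]$ are stretched by $f_\ell$ by a factor at least $(1-\lambda)$ of the average stretch $\frac{|f_\ell(0) - f_\ell(M^2)|}{M^2}$. (When $m$ is odd, one invokes the modified version of Lemma~\ref{stretchlemma} noted in the Remark above, with $(1-2\lambda)$ in place of $(1-\lambda)$; this only affects constants.) Since $M = m^j$ is a power of $m$, the F\o lner sets
$$S^i = [(i-1)M, iM) \times B(0,1) \times [0, \log_m M)$$
are translates of the standard F\o lner set $S_M$ and are all contained in $S_{M^2}$, so the tiling construction guarantees that $\{|\D \cap S^k|, |\D \cap S^{k+1}|\} = \{d_1 |S_M|, d_2 |S_M|\}$.

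Next I would feed the sets $S^k, S^{k+1}$ into the structure of the companion quasi-isometry $F = (f_\ell, f_u, \mathrm{id})$: up to postcomposition with isometries, $F(S^k) = U_{rM} \times U_s \times [0, \log_m M)$ and $F(S^{k+1}) = U_{r'M} \times U_s \times [0, \log_m M)$ for some $r, r'$ and a common $U_s = \bigsqcup_{i \in I} A_i$ (the $y$-coordinate is untouched by the stretching in the $x$-direction, so the tree-component image is the same for both). The factor $r$ (resp. $r'$) is, up to the biLipschitz constant, the stretch of $f_\ell$ across $[(k-1)M, kM]$ (resp. $[kM, (k+1)M]$); more precisely $r = \frac{|f_\ell((k-1)M) - f_\ell(kM)|}{M}$ and similarly for $r'$. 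Combining this with \eqref{eqnonenew} and the two lower bounds from Lemma~\ref{stretchlemma}, exactly as in Lemma~\ref{getM02}, yields
$$|r - r'| \leq (1+\lambda)\frac{|f_\ell(0) - f_\ell(M^2)|}{M^2} - (1-\lambda)\frac{|f_\ell(0) - f_\ell(M^2)|}{M^2} \leq 2\lambda L.$$

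Finally I would invoke Lemma~\ref{getM03}: it supplies $M_0$ and $\epsilon_0$ (depending on $K, C$) such that whenever $M = m^j > M_0$ and the two F\o lner sets carry $\D$-masses $d_1|S_M|$ and $d_2|S_M|$, their companion images must satisfy $|r - r'| > \epsilon_0$. Setting $\lambda_0 := \frac{\epsilon_0}{2L}$ and taking $M_0$ as in Lemma~\ref{getM03}, the two bounds $|r-r'| \leq 2\lambda L$ and $|r - r'| > \epsilon_0$ are contradictory once $M \geq M_0$ and $\lambda \leq \lambda_0$; hence no such $K$-biLipschitz $f$ can exist. The only genuinely new bookkeeping compared to the $SOL$ case is that the common $y$-factor $U_s$ is a disjoint union of balls in $\Q_m$ rather than a single interval, and that $|F(S^i)|$ must be compared to $|S_M|$ through the two-sided estimate of Lemma~\ref{volstr:lemma} rather than an exact volume formula; but these have already been absorbed into Lemma~\ref{getM03}, so here they require no further work. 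The main (very mild) obstacle is simply to make sure the odd-$m$ modification of Lemma~\ref{stretchlemma} is applied consistently and that the resulting change in constants does not interfere with the choice $\lambda_0 = \epsilon_0/(2L)$, which it does not.
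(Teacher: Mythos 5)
Your proposal is correct and follows essentially the same approach as the paper's proof: apply Lemma~\ref{stretchlemma} (with the odd-$m$ adjustment noted in the preceding remark) to find two adjacent intervals with comparable stretch, identify $r,r'$ as the stretch factors of $f_\ell$ over these intervals so that the hypothesis forces $|r-r'|\le 2\lambda L$, and then invoke Lemma~\ref{getM03} to derive the contradictory lower bound $|r-r'|>\epsilon_0$, setting $\lambda_0=\epsilon_0/(2L)$. The extra bookkeeping you flag (the common $y$-factor being a disjoint union of $\Q_m$-balls, and the reliance on Lemma~\ref{volstr:lemma}) is indeed already absorbed into Lemma~\ref{getM03}, exactly as you say.
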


\begin{proof}
Again, the choice of $M$ defines F\o lner sets for $i=1, \ldots, M$:
$$S^i=[(i-1)M, iM) \times B(0,1) \times [0,\log_m{M}).$$
These are all subsets of $S_{M^2}$. 
By Lemma \ref{stretchlemma}, we can find $k$ such that  both inequalities below hold:
\begin{eqnarray}\label{thearray1new}
|f_1((k-1)M) - f_1(kM)| &\geq& (1- \lambda)     \frac{|f_1(0) - f_1(M^2)|}{M},\\
\label{thearray2new}
|f_1(kM) - f_1((k+1)M)| &\geq& (1- \lambda)     \frac{|f_1(0) - f_1(M^2)|}{M}.
\end{eqnarray}

Now either  $|\D \cap S^k|= |Q_1^j|=d_1|S_{m^{j_0}}|$ and $|\D \cap S^{k+1}|= |Q_2^j|=d_2|S_{m^{j_0}}|$, 
or vice versa.  
By equations (\ref{eqnonenew}), (\ref{thearray1new}) and (\ref{thearray2new}), 
\bea
|r-r'| &\leq& (1+ \lambda )\frac{|f_1(0) - f_1(M^2)|}{M^2} -(1-\lambda)\frac{|f_1(0) - f_1(M^2)|}{M^2} \leq 2\lambda L.
\eea
Let $M_0$ and $\epsilon_0$ be as in Lemma \ref{getM03} and $\lambda_0=\frac{\epsilon_0}{2L}$. Then we have that if $M \geq M_0$ and $\lambda \leq \lambda_0$ then $f: \D \to \Gamma$ cannot be $K$-biLipschitz. 
\end{proof}

\begin{theorem}\label{bs:thm} For any $m$, there exist non-rectifiable Delone sets in $BS(1,m)$.
\end{theorem}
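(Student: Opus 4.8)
The plan is to repeat, essentially verbatim, the endgame of the proof of Theorem~\ref{sol:thm}, now with the Baumslag--Solitar versions of the auxiliary lemmas in place of the $SOL$ ones. Let $\mathcal{D} \subset BS(1,m)$ be the Delone set built in the Tiling subsection above, and suppose toward a contradiction that $f : \mathcal{D} \to BS(1,m)$ is $K$-biLipschitz. After composing with left multiplication by a group element so that $f$ coarsely fixes the base point, $f$ extends to a self-quasi-isometry of $X_m$, and by the Farb--Mosher theorem it is at bounded distance $A = A(K,C)$, under the identification $\bar{\pi}$, from a companion map $F(x,y,t) = (f_\ell(x), f_u(y), t)$ of $\bar{X}_m = \R \times \Q_m \times \R$, with $f_\ell : \R \to \R$ and $f_u : \Q_m \to \Q_m$ both $L$-biLipschitz for $L = L(K,C)$. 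The only feature used below is that $f_\ell$ is an $L$-biLipschitz self-map of $\R$.

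Next I would invoke Lemma~\ref{getsomething}: it produces thresholds $\lambda_0, M_0$ (depending only on $K, C$) such that, since $f$ \emph{is} $K$-biLipschitz, for every $M = m^j \geq M_0$ and every $\lambda \leq \lambda_0$ the uniform stretching inequality $|f_\ell((i-1)M) - f_\ell(iM)|/M \leq (1+\lambda)\,|f_\ell(0) - f_\ell(M^2)|/M^2$ must fail for at least one index $i \in \{1, \ldots, M\}$. This is where the construction of $\mathcal{D}$ pays off: two adjacent copies $S^k, S^{k+1}$ of $S_M$ inside $S_{M^2}$ carry tiles of volume $d_1|S_M|$ and $d_2|S_M|$ with $d_1 \neq d_2$, so Lemmas~\ref{volstr:lemma} and~\ref{folner:lemma} (the $BS(1,m)$ volume and boundary estimates for $F(S_M)$) force the boxes $F(S^k), F(S^{k+1})$ to have the form $U_{rM}\times U_s\times[0,\log_m M)$ and $U_{r'M}\times U_s\times[0,\log_m M)$ with $|r-r'|$ bounded below by the $\epsilon_0$ of Lemma~\ref{getM03}; together with Lemma~\ref{stretchlemma}, this is incompatible with the uniform bound once $\lambda$ is small.

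Finally I would apply Lemma~\ref{last step} to $f_\ell$, with $M$ running over powers of $m$ so that the scaling $M \mapsto M^2$ keeps us among powers of $m$ (exactly as $M = N_0^{2^m}$ in the $SOL$ case): a real map that, for all $M \geq M_0$ and $\lambda \leq \lambda_0$, stretches some sub-interval of the $M$-subdivision of $[0,M^2]$ by more than the factor $(1+\lambda)$ relative to the whole interval cannot be $L$-Lipschitz for any $L$, since iterating $M \mapsto M^2$ a bounded number of times produces a stretching factor exceeding $L^2$. This contradicts the $L$-biLipschitz property of $f_\ell$ from the first step, so no such $f$ exists and $\mathcal{D}$ is non-rectifiable.

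The substantive difficulties are all packed into the lemmas we are entitled to cite --- the Farb--Mosher structure theorem for quasi-isometries of $X_m$, and, more delicately, Lemmas~\ref{folBS:lemma}, \ref{volstr:lemma} and~\ref{folner:lemma}, which are messier than their $SOL$ analogues because the upper factor is the non-archimedean space $\Q_m$ (so biLipschitz images of balls are disjoint unions of several smaller balls) and because $|S_N\cap BS(1,m)| = N\log_m N$ only holds when $N$ is a power of $m$. In the assembly itself the only genuine wrinkle is the odd-$m$ case noted in the Remark above: then $S_{N_0}$ is tiled by an odd number $N_0$ of first-layer copies, one tile must be assigned a corrected density, and Lemma~\ref{stretchlemma} is applied with lower bound $(1-2\lambda)$ rather than $(1-\lambda)$; this merely rescales $\lambda_0$ and $\epsilon_0$ and changes nothing structural.
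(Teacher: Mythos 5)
Your proposal reproduces the paper's proof of Theorem~\ref{bs:thm} essentially verbatim: build $\mathcal{D}$ via the tiling, pass through Farb--Mosher to get an $L$-biLipschitz boundary map $f_\ell$, invoke Lemma~\ref{getsomething} to force the stretching inequality to fail for some $i$ at every admissible scale $M=m^j$, and conclude via Lemma~\ref{last step} that $f_\ell$ cannot be $L$-Lipschitz. The extra commentary you give (on Lemmas~\ref{folBS:lemma}, \ref{volstr:lemma}, \ref{folner:lemma}, and the odd-$m$ adjustment) simply unpacks what the cited lemmas already contain, so there is no substantive deviation from the paper's argument.
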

\begin{proof} 
Let $\mathcal{D}$ as indicated above and suppose that $f:\mathcal{D} \to BS(1,m)$ is a $K$-biLipschitz map. Let $F=(f_\ell,  f_u, Id)$ be the companion quasi-isometry to $f$.
Then $f_\ell$ must be $L$-biLipschitz for some $L$. 
Now by Lemma \ref{getsomething}, there exist $M_0$ and $\lambda_0$ such that if $M\geq M_0$ and $\lambda \leq \lambda_0$, 
then for some $i\in\{1, \ldots, M\}$,  
$$ \frac{|f_1((i-1)M) - f_1(iM)|}{M} > (1+\lambda) \frac{|f_1(0) - f_1(M^2)|}{M^2}.$$
But then Lemma \ref{last step} shows that $f_\ell$ cannot be $L$-biLipschitz for any $L$. 
\end{proof}

{\bf Remark.} All of the groups $\Gamma$ we were able to treat in this paper had the following main feature in common: We were able to identify their quasi-isometry as products of bilipschitz maps on various \emph{boundary} metric spaces with at least one of those spaces being $\R$. Using this information we were able to show that under a quasi-isometry F\o lner sets were mapped to F\o lner sets and then to construct a tiling of the space in a way that made it impossible for a bilipschitz map $f: \mathcal{D} \to \Gamma$ to have a boundary map that was biLipschitz in $\R$.  It would be interesting to find examples of non-rectifiable Delone sets in abelian-by-abelian solvable Lie groups whose boundaries were not all one dimensional. 

%%%%%%%%%%%%%%%%%%%%%%%%%%%%%%%%%%%%%%%%%%%%%

\addcontentsline{toc}{subsection}{References}

\vspace{0.1cm}

Tullia Dymarz\\
University of Wisconsin-Madison, 480 Lincoln Drive, Madison, WI 53706\\
E-mail address: dymarz@math.wisc.edu

Andr\'es Navas\\
Dpto. de Matem\'atica y C.C., Univ. de Santiago, Alameda 3363, Santiago, Chile\\ 
E-mail address: andres.navas@usach.cl
\end{document}